\def\indic#1{\mathbbm1_{\{#1\}}}
\def\E{\mathbb E}
\def\mP{\mathcal P}
\def\mO{\mathcal O}
\def\mA{{\mathcal A}}
\def\diffd{\text d}
\def\R{\mathbb{R}}
\let\Re\relax
\DeclareMathOperator\Re{Re}
\def\C{\mathbb{C}}
\def\diffd{\mathrm{d}}
\newtheorem{corr}{Corrolary}
\newtheorem{lem}{Lemma}
\newtheorem{prop}{Proposition}
\newtheorem{thm}{Theorem}
\title{Ahead of the Fisher-KPP front}
\author{Éric
Brunet\footnote{\href{mailto:eric.brunet@ens.fr}{eric.brunet@ens.fr}}\\\footnotesize Sorbonne Université,  Laboratoire de
Physique de l’École Normale Supérieure,\\[-1ex]\footnotesize ENS, Université PSL,
CNRS, Université Paris Cité, F-75005 Paris, France
}
\date{2023-02-20}
\begin{document}
\maketitle
\begin{abstract}
The solution $h$ to the  Fisher-KPP equation with a steep enough initial
condition develops into a front moving at
velocity~2, with logarithmic corrections to its position. In this paper we
investigate the value $h(ct,t)$ of the solution ahead of the front, at time~$t$
and position $ct$, with $c>2$. That value goes
to zero exponentially fast with time, with a well-known rate, but the
prefactor depends in
a non-trivial way of $c$, the initial condition and the non-linearity in
the equation. We compute an asymptotic expansion of that prefactor
for velocities $c$ close to 2. The expansion is surprisingly explicit and
irregular. The main tool of this paper is the so-called ``magical
expression'' which relates the position of the front, the initial
condition, and the quantity we investigate.
\end{abstract}
\section{Introduction}
The study of fronts interpolating from a stable solution to an unstable
solution is an important problem in mathematics, physics and biology; see
for instance \cite{AronsonWeinberger.1975, McKean.1975, DerridaSpohn.1988,
Murray.2002, vanSaarloos.2003, Munier.2015}. The archetypal model is the Fisher-KPP equation \cite{Fisher.1937,KPP.1937}
\begin{equation}
\partial_t h=\partial_x^2h +h-F(h),
\label{FKPP}
\end{equation}
where, throughout the paper, the non-linearity $F(h)$ is assumed to
satisfy the so-called ``Bramson's conditions'': \cite{Bramson.1978,Bramson.1983}
\begin{equation}
\begin{gathered}
F\in C^1[0,1],\quad F(0)=0,\quad F(1)=1,\quad F'(h)\ge0,\quad F(h)<h\text{ for
$h\in(0,1)$},
\\
F'(h)=\mathcal O(h^p)\text{ for some $p>0$ as $h\searrow0$}.
\end{gathered}
\label{BramsonsConditions}
\end{equation}
(The choice $F(h)=h^2$ is often made.)
One checks with these conditions that $h=0$ is an unstable solution and $h=1$
is a stable solution. We always assume implicitly that the initial
condition $h_0$ satisfies
$h_0\in[0,1]$; this implies, by comparison, that $0<h(x,t)<1$ for all $x$
and all $t>0$. We also always assume for simplicity that $h_0(x)\to1$ as
$x\to-\infty$, but this could be significantly relaxed.

A famous result due to Bramson \cite{Bramson.1978,Bramson.1983} (see also \cite{HamelNolenRoquejoffreRyzhik.2013,Roberts.2013}) states that,
\begin{equation}\label{Bramson's result}
h\big(2t-\tfrac32\log t + z,t\big) \xrightarrow[t\to\infty]{}\omega(z-a)
\qquad\text{iff $\int \diffd x\, h_0(x) x e^x < \infty$},
\end{equation}
where $\omega(z)$,
called the critical travelling wave, is a decreasing function
interpolating from $\omega(-\infty)=1$ to $\omega(+\infty)=0$,
and where the shift $a$ depends on the initial condition. In words, if
$h_0$ decays ``fast enough'' at infinity, then the stable solution $h=1$
on the left invades the unstable solution $h=0$ on the right, and the
position of the invasion front is $2t-\frac32\log t + a$.

The travelling wave $\omega$ is the unique solution to
\begin{equation}
\omega''+2\omega'+\omega-F(\omega)=0,\qquad
\omega(-\infty)=1,\qquad\omega(0)=\tfrac12,\qquad\omega(+\infty)=0,
\label{propomega}
\end{equation}
and there exists $\tilde\alpha>0$ and $\tilde\beta\in\R$
(depending on the choice of the non-linearity $F(h)$) 
such that
$\omega(z)= (\tilde\alpha z + \tilde\beta ) e^{-z}+\mathcal O(e^{-(1+q)z})$ as
$z\to\infty$, where $q$ is any number in $(0,p)$. We prefer to write the equivalent statement:
\begin{equation}
\omega(z-a)= (\alpha z + \beta ) e^{-z}+\mathcal O(e^{-(1+q)z})\quad\text{as
$z\to\infty$},
\label{omegaalphabeta}
\end{equation}
where $\alpha>0$ and $\beta$ now depend also on the initial condition
$h_0$ through $a$
and are given by
$\alpha=\tilde\alpha e^a$ and
$\beta=(\tilde\beta-a\tilde\alpha)e^a$.

Let $\mu_t$ be the position where the front at time $t$ has value $1/2$ (or
the largest such position if there are more than one):
\begin{equation}
h(\mu_t,t)=\frac12.
\label{defmut}
\end{equation}
Bramson's result \eqref{Bramson's result} implies that
$\mu_t=2t-\frac32\log t + a + o(1)$ for large times if $\int \diffd x\,
h_0(x) x e^x<\infty$.
Recent results indicate that a more precise estimate of $\mu_t$ can be
given: if $h_0$ decays to zero ``fast enough'' as
$x\to\infty$, the position $\mu_t$ of the front is believed to satisfy:
\begin{equation}\label{position}
\mu_t=2t-\frac32\log t +a -3\frac{\sqrt\pi}{\sqrt t}+\frac98[5-6\log
2]\frac{\log t}t+\mathcal O\Big(\frac1t\Big),
\end{equation}
where we recall that $a$ depends on the initial condition and on the choice
of $F(h)$.
The $1/\sqrt
t$ correction is known as the Ebert-van Saarloos correction, from
a non-rigorous physics paper \cite{EbertvanSaarloos.2000}. This result was
proved \cite{NolenRoquejoffreRyzhik.2019} for $F(h)=h^2$ and $h_0$ a compact
perturbation of the step function (\textit{i.e.\@} $h_0$ differs from
the step function $\indic{x<0}$ on a compact set); see also
\cite{BBHR.2016}. The $(\log t)/t$ correction was conjectured in
\cite{BerestyckiBrunetDerrida.2017,BerestyckiBrunetDerrida.2018} using
universality argument and a implicit solution of a related model; it was
proved  in \cite{Graham.2019} for $F(h)=h^2$ and  $h_0$ a compact 
perturbation of the step function. Arguments given in \cite{BerestyckiBrunetDerrida.2017} suggest that
the Ebert-van Saarloos term holds iff  $\int \diffd x\, h_0(x) x^2 e^x
<\infty$ and that the $(\log t)/t$ terms holds iff $\int \diffd x\, h_0(x) x^3 e^x
<\infty$, for any choice of $F(h)$ satisfying~\eqref{BramsonsConditions}.

Another quantity of interest is the value of $h(ct,t)$ for $c>2$ and
large $t$. For instance, recalling \cite{McKean.1975} that $h(x,t)$, for
$F(h)=h^2$ and $h_0=\indic{x<0}$, is the probability that the rightmost
position at time $t$  in a branching Brownian motion is located on the
right of $x$, then $h(ct,t)$ would be the probability of a large deviation
where this rightmost position sustains a velocity $c>2$ some time $t$.

For a step initial condition, it is known
\cite{ChauvinRouault.1988, BovierHartung.2014, BovierHartung.2015, DerridaMeersonSasorov.2016,
BerestyckiBrunetCortinesMallein.2022} that
\begin{equation}\label{Phi}
h(ct,t) \sim \Phi(c) \frac1{\sqrt{4\pi t}} e^{(1-\frac{c^2}4)t}
\quad\text{as $t\to\infty$, for $c>2$},
\end{equation}
for some continuous function $c\mapsto \Phi(c)$, for an arbitrary
non-linearity $F(h)$ \cite{ChauvinRouault.1988,}. 
(Note: The function $\Phi(c)$ in \eqref{Phi} is defined as in
\cite{DerridaMeersonSasorov.2016}. The function $\tilde C(\sigma_e)$ in \cite{BovierHartung.2015} and
$C(\rho)$ in \cite{BerestyckiBrunetCortinesMallein.2022} are identical and
related to $\Phi(c)$ by $\frac1{\sqrt{4\pi}}\Phi(c)=\frac2c C(\frac c2)$.)

We show in \autoref{prop1} below that \eqref{Phi} actually holds for any initial condition $h_0$ and any
$c>2$ such that $\int\diffd x \,h_0(x) e^{\frac c2 x}<\infty$, with
a function $\Phi(c)$ depending of course on $h_0$ and on $F(h)$.

The time dependence in
\eqref{Phi} is not surprising: the solution $h_\text{lin}$ to the
linearised Fisher-KPP equation, \textit{i.e.}\@
\eqref{FKPP} with $F(h)=0$, and with a step initial condition
$h_0(x)=\indic{x<0}$ satisfies \eqref{Phi} with a prefactor
$\Phi_\text{lin}(c)=2/c$. However, the dependence in $c$ of the prefactor
$\Phi(c)$ for the (non-linear) Fisher-KPP equation is much more
complicated. For $h_0(x)=\indic{x<0}$, it has been proved
\cite{BovierHartung.2015,BerestyckiBrunetCortinesMallein.2022} that
\begin{equation}
\Phi(2)=0,\qquad \Phi(c)\sim\frac2c \quad\text{as $c\to\infty$}.
\end{equation}
It is argued in \cite{DerridaMeersonSasorov.2016} that, for
$h_0(x)=\indic{x<0}$ and $F(h)=h^2$,
\begin{equation}
\Phi(2+\epsilon)\sim2\sqrt\pi \alpha \epsilon\quad\text{as
$\epsilon\searrow0$},\qquad\qquad
\Phi(c)\simeq\frac2c-\frac8{c^3}+\frac{6.818\ldots}{c^5}+\cdots
\quad\text{as $c\to\infty$},
\end{equation}
where $\alpha$
is the coefficient defined in \eqref{omegaalphabeta}.

The main result of this paper is an asymptotic expansion of the function
$\Phi$ for $c$ close to $2$:

\begin{thm}\label{mainthm}
For the Fisher-KPP equation \eqref{FKPP} with $F(h)=h^2$ and an initial
condition $h_0$ which is a compact perturbation of the step function, one
has
\begin{equation}
\begin{aligned}
\Phi(2+\epsilon)
=\sqrt\pi\Big(\alpha-\frac\beta2\epsilon\Big)
\Big[ 2\epsilon+3\epsilon^2\log\epsilon
-3\Big(1-\frac{\gamma_E}2\Big)\epsilon^2
+\frac94\epsilon^3\log^2\epsilon
& \\
+\frac34(3\gamma_E-6\log2-1)\epsilon^3\log\epsilon
&\Big]+\mO(\epsilon^3)
\end{aligned}
\label{mainresult}
\end{equation}
where $\gamma_E$ is Euler's constant, and where $\alpha$ and $\beta$
 are the coefficients defined in \eqref{omegaalphabeta}.

Actually, \eqref{mainresult} holds for any choice of $F(h)$ and of $h_0$
such that
\begin{enumerate}
\item $\int \diffd x\, h_0(x) e^{rx}<\infty$ for some $r>1$. (Otherwise,
$\Phi(c)$
would not be defined for $c>2$ and the expansion \eqref{mainresult} would be
meaningless.)
\item The position $\mu_t$ of the front satisfies the expansion~\eqref{position},
\item There exists $C>0$, $t_0\ge0$ and a neighbourhood $U$ of $1$ such that
\begin{equation}
\left |
\int \diffd z \, \Big(F[h(\mu_t+z,t)]
- F[\omega(z)]\Big)
 e^{rz}
\right| \le \frac C t\qquad\text{for $t> t_0$ and $r\in U$}.
\label{technical}
\end{equation}
\end{enumerate}
\end{thm}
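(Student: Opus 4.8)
\emph{Setup.} The tool is the \emph{magical expression} advertised in the abstract; in the form relevant here it reads, for every admissible $c>2$,
\[
\Phi(c)=\int\diffd x\,h_0(x)\,e^{\frac c2 x}-\int_0^{\infty}\diffd t\;e^{-(1+\frac{c^2}{4})t}\int\diffd x\;F\big[h(x,t)\big]\,e^{\frac c2 x},
\]
which is the identity behind \eqref{Phi}. Write $c=2+\epsilon$ and $\gamma=c/2=1+\epsilon/2$, so that $1+c^2/4=1+\gamma^2$ and, after pulling the leading factor $e^{2\gamma t}$ of the front through the exponential weight, the time integrand carries $e^{-(1+\gamma^2)t}e^{2\gamma t}=e^{-(\gamma-1)^2 t}=e^{-\epsilon^2 t/4}$. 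Hence the relevant time scale is $t\sim\epsilon^{-2}$: this is what converts powers of $1/t$ and $\log t$ in the front expansion into powers of $\epsilon$ and $\log\epsilon$.

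\emph{Replacing $h$ by the travelling wave.} In the inner integral I substitute $x=\mu_t+z$ and peel off the front: by hypothesis~\eqref{technical},
\[
\int\diffd z\;F\big[h(\mu_t+z,t)\big]\,e^{\gamma z}=K(\gamma)+\mO(1/t),\qquad K(\gamma):=\int\diffd z\;F\big[\omega(z)\big]\,e^{\gamma z},
\]
uniformly for $\gamma$ in a neighbourhood of $1$, where $K$ is analytic there. Writing $F[\omega(z)]=\omega''(z)+2\omega'(z)+\omega(z)$ and integrating by parts against $e^{\gamma z}$, the boundary terms at $z=+\infty$ are controlled by \eqref{omegaalphabeta}; one finds $K(1)=\tilde\alpha$ and $K'(1)=-\tilde\beta$, hence $e^{\gamma a}K(\gamma)=\alpha-\tfrac{\beta}{2}\epsilon+\mO(\epsilon^2)$, which is the origin of the overall prefactor $\sqrt\pi\big(\alpha-\tfrac\beta2\epsilon\big)$ in \eqref{mainresult}. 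The part of the inner integral coming from $z$ of order $t$ — genuinely ahead of the front, where $\omega$ is a poor approximation — contributes, because $F[h]\sim h^2$ is quadratically small there and $\Phi(2)=0$, only $\mO\big(t^{-3}e^{2\gamma t}\big)$, dominated by the $\mO\big(t^{-3/2}e^{2\gamma t}\big)$ of the front.

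\emph{The explicit terms.} Inserting \eqref{position} gives $e^{\gamma\mu_t}=e^{2\gamma t}\,t^{-3\gamma/2}e^{\gamma a}\exp\!\big[\gamma(-3\sqrt\pi\,t^{-1/2}+\tfrac98(5-6\log2)\,t^{-1}\log t+\mO(t^{-1}))\big]$; expanding the last exponential turns the time integral into a finite combination of elementary integrals $\int^{\infty}e^{-\epsilon^2 t/4}\,t^{-3\gamma/2-k/2}(\log t)^{j}\,\diffd t$, each an incomplete Gamma function (or a derivative of one in the exponent parameter) with argument $\propto\epsilon^2$. The mechanism is now transparent: as $\gamma\to1$ the exponent $\tfrac32\gamma+\tfrac k2$ tends to the positive integer $\tfrac32+\tfrac k2$ precisely for odd $k$, so $\Gamma\big(1-\tfrac32\gamma-\tfrac k2\big)$ has a simple pole; multiplying its Laurent expansion by $(\epsilon^2/4)^{\frac32\gamma+\frac k2-1}=\exp\!\big[(\tfrac32\gamma+\tfrac k2-1)(2\log\epsilon-2\log2)\big]$ expanded about that integer cancels the pole and leaves a $\log\epsilon$ together with the $\gamma_E$ coming from the finite part of the digamma, while for even $k$ the exponent is a half-integer, $\Gamma$ is finite, and one gets the $\sqrt\pi$'s and no new logarithm. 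Concretely, the $k=0$ term already produces the pattern $2\epsilon+3\epsilon^2\log\epsilon+\tfrac94\epsilon^3\log^2\epsilon+\cdots$ through $(\epsilon^2/4)^{1/2+3\epsilon/4}\Gamma(-\tfrac12-\tfrac{3\epsilon}4)$ (expand $(\epsilon^2/4)^{3\epsilon/4}=e^{\frac32\epsilon\log\epsilon}(1-\tfrac32\epsilon\log2+\cdots)$ and $\Gamma(-\tfrac12-\tfrac{3\epsilon}4)=-2\sqrt\pi\,[1+\tfrac34\epsilon(\gamma_E+2\log2-2)+\cdots]$); the $k=1$ term and the $(\log t)/t$ correction from \eqref{position} then fill in the remaining constants, in particular the $-3(1-\gamma_E/2)$ at order $\epsilon^2$ and the $\tfrac34(3\gamma_E-6\log2-1)$ at order $\epsilon^3\log\epsilon$. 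Multiplying through by $\sqrt\pi\big(\alpha-\tfrac\beta2\epsilon\big)$ reproduces \eqref{mainresult}.

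\emph{The main obstacle: the remainder.} What remains is to show that everything discarded above is $\mO(\epsilon^3)$. The delicate point is that the $\mO(1/t)$ in \eqref{technical}, once weighted by $e^{-\epsilon^2 t/4}\,t^{-3\gamma/2}$, sits right on the borderline: over a time window of length $\sim\epsilon^{-2}$ it \emph{a priori} contributes already at order $\epsilon^0$. One therefore splits the time integral at $t_1\sim\epsilon^{-2}$, bounds the tail $t>t_1$ by $\mO(t_1^{-3/2})=\mO(\epsilon^3)$, folds the small-time region $t\le t_0$ (where \eqref{position} and \eqref{technical} may fail) back into the first term of the magical expression via the Duhamel identity $\int h_0 e^{\gamma x}-\int_0^{t_0}(\cdots)=e^{-(1+\gamma^2)t_0}\int h(x,t_0)e^{\gamma x}$, and then argues that the surviving uncomputable quantity on $[t_0,t_1]$ — together with $\int h_0 e^{\gamma x}$ — is analytic in $\epsilon$ with no dependence on $h_0$ beyond $\alpha$ and $\beta$, hence is pinned to $\mO(\epsilon^3)$ by the (here re-derived) facts $\Phi(2)=0$ and $\Phi(2+\epsilon)\sim 2\sqrt\pi\alpha\epsilon$. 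Getting this bookkeeping right — exploiting the precise $t_1\sim\epsilon^{-2}$ cut so that the $1/t$ errors land beyond order $\epsilon^2$, and matching the ``companion'' of each logarithm correctly — is the part I expect to be genuinely technical; the final specialization to $F(h)=h^2$ with $h_0$ a compact perturbation of the step function is then only a matter of quoting \cite{NolenRoquejoffreRyzhik.2019,Graham.2019} for hypothesis~2 and standard front-relaxation estimates for hypothesis~3.
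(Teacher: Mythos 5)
Your setup correctly identifies the magical relation and the mechanism by which incomplete Gamma functions at argument $\propto\epsilon^2$ turn the $t^{-3/2}$, $t^{-2}$ and $t^{-5/2}\log t$ tails of $e^{(1+\epsilon/2)(\mu_t-2t)}$ into the irregular $\epsilon$, $\epsilon^2\log\epsilon$, $\epsilon^3\log^2\epsilon$ pattern, and your computation of $e^{\gamma a}K(\gamma)=\alpha-\tfrac\beta2\epsilon+\mO(\epsilon^2)$ matches the paper's treatment of $\hat\varphi$. However, the proposal has a genuine gap at exactly the point you flag as ``the main obstacle,'' and your proposed resolution does not work.

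The paper's crucial structural idea is absent from your argument: the magical relation is used for \emph{both} signs of $\epsilon$, and the indicator $\indic{\epsilon>0}$ in front of $\Phi(2+2\epsilon)$ is what lets one eliminate the unknown analytic contribution. Concretely, Lemma~\ref{lem:1s} and Lemma~\ref{lem:O} show that $\hat\varphi(\epsilon)\,I(\epsilon)$ equals an \emph{undetermined} analytic function $\mP(\epsilon)$ plus explicit non-analytic terms built from $|\epsilon|^{3\epsilon}$, $|\epsilon|$ and $|\epsilon|^3\log|\epsilon|$; for $\epsilon<0$ the right-hand side of \eqref{prop22} contains no $\Phi$, so all $\log|\epsilon|$ terms must vanish (this fixes $b$ and $c$) and, after the substitution $|\epsilon|=-\epsilon+2\epsilon\indic{\epsilon>0}$, the \emph{entire} non-analytic piece sits in front of $\indic{\epsilon>0}$. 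The polynomial $\mP(\epsilon)$ — which includes $\int h_0 e^{(1+\epsilon)x}$, the low-time part of the integral, and the analytic remainders $\mA_{\alpha,\beta}(\epsilon)$ from Lemma~\ref{lem:1s} — is then cancelled identically, with no need to know anything about its coefficients. The factor of~$2$ in the leading term $\Phi(2+\epsilon)\sim 2\sqrt\pi\alpha\epsilon$ arises precisely from this decomposition: the $\Gamma(-\tfrac12)|\epsilon|$ piece and the $\tfrac23 b\epsilon$ piece from the Ebert–van Saarloos correction each contribute $-2\sqrt\pi\epsilon$ for $\epsilon>0$ but cancel for $\epsilon<0$, so the coefficient of $\indic{\epsilon>0}\epsilon$ is $-4\sqrt\pi$, not $-2\sqrt\pi$.

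Your proposed substitute — split the time integral at $t_1\sim\epsilon^{-2}$ and pin the surviving analytic piece by $\Phi(2)=0$ and $\Phi(2+\epsilon)\sim 2\sqrt\pi\alpha\epsilon$ — fails for three reasons. First, it is circular: the asymptotics $\Phi(2+\epsilon)\sim 2\sqrt\pi\alpha\epsilon$ is part of what is being proved. Second, even granting both pinning conditions you only fix two of the unknown coefficients of $\mP(\epsilon)$, leaving the $\epsilon^2$ coefficient — which does enter the final answer through $-3(1-\gamma_E/2)\epsilon^2$ — undetermined. Third, the quantity on $[t_0,t_1]$ genuinely depends on the full trajectory $t\mapsto h(\cdot,t)$, not only on $\alpha$ and $\beta$; the assertion that it depends only on $\alpha,\beta$ is exactly what would need to be proved, and it is not analytic in $\epsilon$ in any obvious sense once the cutoff $t_1$ is itself $\epsilon$-dependent (contrast with Lemma~\ref{lem:O}, which avoids any $\epsilon$-dependent cutoff by integrating by parts in $t$). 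Without the $\epsilon<0$ half of the magical relation, the expansion cannot be completed.
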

As will be apparent in the proofs, the expansion \eqref{mainresult} for
$\Phi(2+\epsilon)$ is closely related to the expansion~\eqref{position} for the
position $\mu_t$; in some sense, the 
$\epsilon^2\log\epsilon$ and $\epsilon^3\log\epsilon^2$ terms in
\eqref{mainresult} are connected
to the $1 /\sqrt t$ term in \eqref{position}, and the
$\epsilon^3\log\epsilon$ to the $(\log t)/t$ term.

We will also see in the proof that \eqref{position} cannot hold unless $\int
\diffd x\, h_0(x) x^3e^x<\infty$. As already mentioned, we expect  the converse to be true. 

The technical condition \eqref{technical} should not be surprising:
The quantity $\delta(z,t):=h(\mu_t+z,t)-\omega(z)$ goes to zero
as $t\to\infty$. Moreover, it
satisfies
$\partial_t \delta = \partial_x^2\delta +\dot\mu_t \partial_x\delta +\delta
-F(\omega+\delta)+F(\omega) +(\dot\mu_t-2)\omega'$. For large times, one
can expect from \eqref{position} that $\dot\mu_t-2\sim -\frac3{2t}$ and
$\partial_t\delta\simeq
\partial_x^2\delta+2\partial_x\delta+\delta-F'(\omega)\delta
-\frac3{2t}\omega'$.
Then, it seems likely
that $\delta(z,t)\sim\frac1t\psi(z)$ with
$\psi$ a solution to $\psi''+2\psi'+\psi-F'(\omega)\psi=\frac32\omega'$.
(This is actually a result of \cite{Graham.2019} in the case $F(h)=h^2$.)
This leads to, $F[h(\mu_t+z,t)]-F[\omega(z)]\sim \delta(z,t) F'[\omega(z)]
\sim\frac1t\psi(z) F'[\omega(z)]$, of order $1/t$. Furthermore, (ignoring
polynomial prefactors), $\psi(z)$ decreases as $e^{-z}$ for large $z$ and
$F'[\omega(z)]$ should roughly decrease as $e^{-pz}$, see \eqref{BramsonsConditions}, so
that the integral in \eqref{technical} should converge quickly for $r$
around 1 for $z\to\pm\infty$, and give a result of order $1/t$.

In terms of the function $C(\rho)$ defined in
\cite{BerestyckiBrunetCortinesMallein.2022}, our result can be written as
\begin{equation}
C(1+\epsilon)
=(\alpha-\beta\epsilon)\Big[2\epsilon+ 6\epsilon^2\log\epsilon
+(3\gamma_E+6\log2-4)
\epsilon^2
+9\epsilon^3\log^2\epsilon
+3(3\gamma_E+1)\epsilon^3\log\epsilon
\Big]+\mO(\epsilon^3).
\end{equation}
Note that the authors write $C(\rho)\sim\alpha(\rho-1)$ as $\rho\searrow1$
(bottom of p.\,2095),
but their $\alpha$ is twice ours.

The first part of \autoref{mainthm} is the consequence of its second part
and of the following result:

\begin{prop}[Mostly Cole Graham 2019 \cite{Graham.2019}]\label{prop1t}
For the Fisher-KPP equation \eqref{FKPP} with $F(h)=h^2$ and an initial
condition $h_0$ which is a compact perturbation of the step function,
\eqref{position} and \eqref{technical} hold.
\end{prop}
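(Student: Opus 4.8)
The plan is to get \eqref{position} for free from \cite{Graham.2019} and to extract the technical bound \eqref{technical} from the finer description of the front that underlies Graham's argument. Indeed, \eqref{position} is exactly the main theorem of \cite{Graham.2019}, which treats precisely $F(h)=h^2$ with $h_0$ a compact perturbation of $\indic{x<0}$ and proves $\mu_t=2t-\tfrac32\log t+a-3\sqrt\pi/\sqrt t+\tfrac98(5-6\log2)(\log t)/t+\mathcal O(1/t)$. The moment condition $\int\diffd x\,h_0(x)x^3e^x<\infty$ that is implicit in \eqref{position} (see the remark after \autoref{mainthm}) holds automatically here, since $h_0(x)x^3e^x$ has compact support. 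So the only real work is \eqref{technical}.

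For that, set $\delta(z,t):=h(\mu_t+z,t)-\omega(z)$. Because $F(h)=h^2$ exactly, there is no linearisation to control:
\[
\int\diffd z\,\big(F[h(\mu_t+z,t)]-F[\omega(z)]\big)e^{rz}
=\int\diffd z\,\big(2\omega(z)+\delta(z,t)\big)\,\delta(z,t)\,e^{rz}.
\]
I would then bound the right-hand side using (i) the decay of the travelling wave $\omega(z)=(\alpha z+\beta)e^{-z}+\mathcal O(e^{-(1+q)z})$ from \eqref{omegaalphabeta}, and (ii) the quantitative form of $\delta(z,t)\to0$ that Graham establishes: $t\,\delta(z,t)\to\psi(z)$, with $\psi$ the bounded solution of $\psi''+2\psi'+\psi-2\omega\psi=\tfrac32\omega'$, together with — this is the point that needs checking in \cite{Graham.2019} — bounds of the form $|\delta(z,t)|\le \tfrac{C}{t}(1+z)e^{-z}$ for $z\ge0$ and $|\delta(z,t)|\le \tfrac{C}{t}$ for $z\le0$, uniformly in $t\ge t_0$ (the exponent of $(1+z)$ is whatever $\psi$ produces; an extra logarithmic-type factor is harmless). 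Given (i)--(ii): on $z\ge0$ the integrand is $\mathcal O\!\big(t^{-1}(1+z)^{2}e^{-(2-r)z}\big)$, which integrates to $\mathcal O(1/t)$ uniformly for $r$ in any neighbourhood $U$ of $1$ with $\sup U<2$; on $z\le0$, $|2\omega+\delta|$ is bounded and $e^{rz}$ decays exponentially for $r$ bounded below, so that part is $\mathcal O(1/t)$ as well; and the $\delta^2$ contribution is smaller still, $\mathcal O(1/t^2)$. Summing gives \eqref{technical} with $U$ a small neighbourhood of $1$.

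The main obstacle is uniformity rather than convergence: \cite{Graham.2019} is phrased as pointwise / locally uniform control of $t\delta$, whereas \eqref{technical} asks for a weighted-$L^1$ bound with weight $e^{rz}$, uniform in $t>t_0$ \emph{and} in $r$ near $1$. The delicate region is $z\to+\infty$, where $e^{rz}$ competes with the $e^{-2z}$ decay of $\omega^2$; it is essential (but harmless) that $U$ can be taken with $\sup U<2$, and that the barrier controlling $\delta$ at $+\infty$ decays like $e^{-z}$ times a fixed polynomial, with a $t$-independent constant. If that exact statement is not literally in \cite{Graham.2019}, I would produce it by hand: $\delta$ solves a linear parabolic equation with potential $-2\omega$ and forcing $(\dot\mu_t-2)\omega'$, and since $\dot\mu_t-2=-\tfrac3{2t}+o(1/t)$ by \eqref{position} and $\omega,\omega'$ have the known exponential asymptotics, a supersolution of the form $\tfrac{C}{t}(1+z)e^{-z}$ on $z\ge0$ can be constructed via the maximum principle; together with the corresponding (easier) bound on $z\le0$, this closes the argument.
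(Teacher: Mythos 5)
Your overall plan coincides with the paper's: take \eqref{position} directly from \cite{Graham.2019}, and establish \eqref{technical} by combining refined pointwise bounds on $\delta(z,t)=h(\mu_t+z,t)-\omega(z)$ at $z\ge0$ extracted from Graham's analysis with a maximum-principle argument for $z\le0$. The decomposition $F[h]-F[\omega]=(2\omega+\delta)\delta$ and the splitting of the $z$-integral into a left region, a bulk, and a tail are also exactly the paper's. The $z\le0$ supersolution you sketch is essentially \autoref{lem4}.

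The gap is in the $z\ge0$ bound you propose. You assert a \emph{uniform} estimate $|\delta(z,t)|\le\frac Ct(1+z)e^{-z}$ for all $z\ge0$ and $t\ge t_0$, and suggest it could be produced by a supersolution of that form. That bound is false: in the diffusive crossover region $z\sim\sqrt t$, the true front profile involves the Gaussian cutoff $e^{-z^2/(4t)}$, so $h(\sigma_t+z,t)$ differs from $\phi(z)\sim A_0 z e^{-z}$ by an amount comparable to $\phi(z)$ itself, not smaller by a factor $1/t$. Concretely, at $z=\sqrt t$ one has $\delta\sim A_0\sqrt t\,e^{-\sqrt t}(e^{-1/4}-1)$, which dwarfs $\frac Ct\sqrt t\,e^{-\sqrt t}$ as $t\to\infty$. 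No supersolution of the form $\frac Ct(1+z)e^{-z}$ can therefore dominate $|\delta|$ on all of $z\ge0$; the maximum-principle construction you invoke cannot deliver what you claim. What Graham's work (and \autoref{corrCole} in the paper) actually gives is a two-range estimate: $|h(\sigma_t+z,t)-\phi(z)|\le C_\gamma(1+|z|^3)e^{-z}/t$ only for $z\le t^{\gamma/6}$, and the much weaker $|h(\sigma_t+z,t)-\phi(z)|\le C_\gamma z e^{-z}$ (no $1/t$) for $z> t^{\gamma/6}$. The $O(1/t)$ total in \eqref{technical} still follows because over the far range $z>t^{\gamma/6}$ the integrand $e^{rz}\phi(z)\,|\delta|\lesssim z^2 e^{-(2-r)z}$ integrates to something exponentially small in $t$ once $r<2$; but this requires the explicit two-range splitting, not a single barrier. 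You would also need, as the paper does, the translation between the $\sigma_t$-centred statements of \cite{Graham.2019} (with profile $\phi$) and your $\mu_t$-centred integral (with profile $\omega$), using $\mu_t=\sigma_t+\phi^{-1}(\tfrac12)+O(1/t)$; this is routine but should be said.
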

The fact that \eqref{position} holds under the hypotheses of
\autoref{prop1t} is the main result of \cite{Graham.2019}. The proofs
of \cite{Graham.2019} contain the hard parts 
in showing that \eqref{technical} also holds.

The main tool used in this paper is the so-called magical relation,
which gives a relation between the initial condition, the position $\mu_t$
of the front, and the non-linear part of the equation. Introduce
\begin{equation}
\label{defgamma}
\gamma:=\sup\Big\{r>0; \int \diffd x\, h_0(x) e^{rx}<\infty\Big\},
\end{equation}
and 
\begin{equation}
\varphi(\epsilon,t)   :=\int\diffd z\, F[h(\mu_t+z,t)]e^{(1+\epsilon)z},\qquad
\hat\varphi(\epsilon) :=\int\diffd z\, F[\omega(z)]   e^{(1+\epsilon)z}.
\label{defphi}
\end{equation}
(With these quantities, the condition \eqref{technical} can  be written
$|\varphi(\epsilon,t)-\hat\varphi(\epsilon)| \le C/t$ for all $t>t_0$ and
all $\epsilon$ in some neighbourhood of 0.)
Then
\begin{prop}[Magical relation]\label{prop2}
For any $\epsilon\in(-1,\gamma-1)$
the following relation holds
\begin{equation}
\int_0^\infty\diffd t \,\varphi(\epsilon,t)e^{-\epsilon^2
t +(1+\epsilon)(\mu_t-2t)}=
\int\diffd x \,h_0(x)e^{(1+\epsilon) x}
-\indic{\epsilon>0}\Phi(2+2\epsilon).
\label{prop21}
\end{equation}
Furthermore, if $\gamma>1$ and \eqref{technical} holds,
one has
\begin{equation}
\hat\varphi(\epsilon)\int_0^\infty\diffd t \,e^{-\epsilon^2
t +(1+\epsilon)(\mu_t-2t)}=
\int\diffd x \,h_0(x)e^{(1+\epsilon) x}
-\indic{\epsilon>0}\Phi(2+2\epsilon)+\mP(\epsilon)+\mO(\epsilon^3).
\label{prop22}
\end{equation}
where $\mP(\epsilon)$ is some polynomial in $\epsilon$.
\end{prop}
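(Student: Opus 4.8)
The plan is to integrate \eqref{FKPP} against the exponential weight $e^{(1+\epsilon)x}$. Write $\lambda:=1+\epsilon$ and set $M(t):=\int\diffd x\,h(x,t)e^{\lambda x}$ and $N(t):=\int\diffd x\,F[h(x,t)]e^{\lambda x}$; both are finite for $t>0$, since $h$ and its derivatives decay Gaussianly as $x\to+\infty$ (by comparison with the linearised equation) while $e^{\lambda x}\to0$ as $x\to-\infty$ because $\lambda>0$. Multiplying \eqref{FKPP} by $e^{\lambda x}$ and integrating by parts twice (all boundary terms vanish for these reasons) gives the linear ODE $\dot M=(1+\lambda^2)M-N$, hence
\[
e^{-(1+\lambda^2)t}M(t)=\int\diffd x\,h_0(x)e^{\lambda x}-\int_0^t\diffd s\,e^{-(1+\lambda^2)s}N(s),
\]
where $M(0)$ is finite exactly when $\epsilon<\gamma-1$. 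Substituting $x=\mu_s+z$ in $N(s)$ and using $\lambda z+\lambda\mu_s=\lambda x$ gives $N(s)=e^{\lambda\mu_s}\varphi(\epsilon,s)$, and since $\lambda\mu_s-(1+\lambda^2)s=(1+\epsilon)(\mu_s-2s)-\epsilon^2s$ the display becomes
\[
e^{-(1+\lambda^2)t}M(t)=\int\diffd x\,h_0(x)e^{(1+\epsilon)x}-\int_0^t\diffd s\,\varphi(\epsilon,s)\,e^{-\epsilon^2s+(1+\epsilon)(\mu_s-2s)}.
\]
This is already the magical relation at finite time; \eqref{prop21} will follow by letting $t\to\infty$.

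The key step is then to identify $\lim_{t\to\infty}e^{-(1+\lambda^2)t}M(t)=\indic{\epsilon>0}\Phi(2+2\epsilon)$. For $\epsilon\le0$, i.e.\ $\lambda\le1$: bound $h\le1$ on $\{x\le\mu_t\}$ and $h(\mu_t+z,t)\le C(1+z_+)e^{-z}$ for $z\ge0$ (a standard upper estimate for the Fisher–KPP front, cf.\ \cite{Bramson.1983}, the latter bound being integrable against $e^{\lambda z}$ precisely because $\lambda<1$), so $M(t)\le C'e^{(1+\epsilon)\mu_t}$; Bramson's $\mu_t-2t=-\tfrac32\log t+a+o(1)$ then gives $e^{-(1+\lambda^2)t}M(t)\le C''e^{-\epsilon^2t}t^{-3(1+\epsilon)/2}\to0$ (the endpoint $\epsilon=0$ is recovered by continuity in $\epsilon$). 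For $\epsilon>0$, i.e.\ $\lambda>1$ and $c:=2\lambda>2$: write $M(t)=t\int\diffd u\,h(ut,t)e^{\lambda ut}$ and apply Laplace's method. Using \eqref{Phi} in the form $h(ut,t)=(\Phi(u)+o(1))(4\pi t)^{-1/2}e^{(1-u^2/4)t}$ uniformly for $u$ in compact subsets of $(2,\infty)$ (the generalisation of \eqref{Phi}, \autoref{prop1}), the exponent $1-u^2/4+\lambda u$ is maximised at the interior point $u=2\lambda$, with value $1+\lambda^2$ and second derivative $-\tfrac12$; the region $u\le2+\eta$ contributes at most $\lambda^{-1}e^{(-\epsilon^2+\lambda\eta)t}=o(e^{(1+\lambda^2)t})$ for $\eta$ small, and the region $u$ large is controlled via $h\le h_{\mathrm{lin}}$ and $\tfrac12\operatorname{erfc}(y)\le e^{-y^2}/(2y\sqrt\pi)$, so the Gaussian integral around $u=2\lambda$ yields $M(t)\sim\Phi(2\lambda)e^{(1+\lambda^2)t}$. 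In the last display the $s$-integral converges as $t\to\infty$ (the factor $e^{-\epsilon^2s}$ dominates for $\epsilon\ne0$, $\varphi(\epsilon,t)$ is bounded in $t$, and $\mu_t-2t$ is bounded above), so letting $t\to\infty$ produces \eqref{prop21}.

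For \eqref{prop22}, assume $\gamma>1$ and \eqref{technical}, and split $\varphi(\epsilon,t)=\hat\varphi(\epsilon)+\delta\varphi(\epsilon,t)$, so that the $s$-integral in \eqref{prop21} equals $\hat\varphi(\epsilon)\int_0^\infty\diffd t\,e^{-\epsilon^2t+(1+\epsilon)(\mu_t-2t)}+E(\epsilon)$ with $E(\epsilon):=\int_0^\infty\diffd t\,\delta\varphi(\epsilon,t)\,e^{-\epsilon^2t+(1+\epsilon)(\mu_t-2t)}$. It then suffices to show $E$ is $C^3$ near $\epsilon=0$ with bounded third derivative: $\mP(\epsilon):=-\bigl[E(0)+E'(0)\epsilon+\tfrac12E''(0)\epsilon^2\bigr]$ is then a polynomial and Taylor's theorem gives $E(\epsilon)=-\mP(\epsilon)+\mO(\epsilon^3)$, which is \eqref{prop22}. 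The integrand of $E$ is smooth in $\epsilon$ for each $t>0$, and it and its $\epsilon$-derivatives stay bounded as $t\to0^+$ since $\mu_t-2t$ does. For $t\ge t_0$, \eqref{technical} gives $|\delta\varphi(\epsilon,t)|\le C/t$; applying Cauchy's estimates to $w\mapsto\int\diffd z\,(F[h(\mu_t+z,t)]-F[\omega(z)])e^{wz}$ — holomorphic in a fixed complex neighbourhood of $w=1$ and $\mO(1/t)$ there, by the refined bound $h(\mu_t+z,t)-\omega(z)=\mO(1/t)$ with exponentially decaying $z$-tails of \cite{Graham.2019} — upgrades this to $|\partial_\epsilon^k\delta\varphi(\epsilon,t)|\le C_k/t$ for $k\le3$; and $e^{(1+\epsilon)(\mu_t-2t)}\le Kt^{-3(1+\epsilon)/2}$ for $t\ge t_0$ by Bramson's theorem. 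Differentiating three times in $\epsilon$ and expanding by Leibniz — the $\epsilon$-derivatives of the exponential being $e^{-\epsilon^2t+(1+\epsilon)(\mu_t-2t)}$ times polynomials in $a:=-2\epsilon t+\mu_t-2t$ and $t$, the worst term involving $a^3-6ta$ — every contribution has absolute value whose $t$-integral is $\mO(1)$ uniformly for $\epsilon$ near $0$: the powers of $\epsilon$ supplied by the $a$-factors exactly compensate the $\epsilon$-dependence of $\int_{t_0}^\infty\diffd t\,t^{\sigma}e^{-\epsilon^2t}$. Hence $E'''$ is bounded near $0$.

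The genuinely delicate step is the $t\to\infty$ asymptotics of $e^{-(1+\lambda^2)t}M(t)$ for $\lambda>1$: obtaining not merely the exponential rate but the exact constant $\Phi(2\lambda)$ forces one to know \eqref{Phi} with enough uniformity along the rays $x=ut$, $u\in(2,\infty)$, and with tail bounds strong enough to run the saddle-point evaluation — precisely the content of \autoref{prop1}. The secondary, more routine, nuisance is extracting the $C^3$ control on $\delta\varphi$ from the single hypothesis \eqref{technical}, which is where one leans on the sharper front asymptotics $h(\mu_t+\cdot,t)-\omega\sim\psi/t$ of \cite{Graham.2019}.
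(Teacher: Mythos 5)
Your derivation of the finite-time magical relation (multiplying \eqref{FKPP} by $e^{\lambda x}$, integrating by parts twice, solving the linear ODE in $M(t)$) matches the paper's computation of $\partial_t g(r,t)$ exactly, and the decay argument for $\epsilon<0$ is sound. However, both remaining pieces of the argument have genuine gaps.

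\textbf{The $\epsilon>0$ limit.} You identify $\lim_{t\to\infty}e^{-(1+\lambda^2)t}M(t)=\Phi(2\lambda)$ by Laplace's method, which requires the expansion $h(ut,t)=(\Phi(u)+o(1))(4\pi t)^{-1/2}e^{(1-u^2/4)t}$ to hold \emph{uniformly} on compact subsets of $u\in(2,\infty)$. Nothing in \autoref{prop1} supplies this: it gives only pointwise convergence for each fixed $c$ together with continuity of $\Phi$, and these do not combine to give locally uniform convergence (no monotonicity in $t$ is available for Dini). The paper itself sketches essentially your Laplace argument but explicitly labels it a ``quick and dirty argument,'' not a proof; the rigorous route is that \autoref{prop1} proves the second limit in \eqref{prop11} \emph{directly}, via the Feynman--Kac representation, conditioning on the endpoint and a Girsanov change of measure, entirely independently of the pointwise limit. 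You should simply cite that second limit rather than attempt to re-derive it. The case $\epsilon=0$ ``by continuity'' is also unjustified as written; the paper handles $r=1$ (assuming $\gamma>1$) in the same branch as $r>1$, using $\Phi(2)=0$ from \autoref{prop1}.

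\textbf{The $\mathcal{O}(\epsilon^3)$ estimate in \eqref{prop22}.} Your plan is to prove $E\in C^3$ near $0$ and apply Taylor, which requires bounds on $\partial_\epsilon^k\delta\varphi(\epsilon,t)$ for $k\le3$. You propose Cauchy's estimates, but those need $|\delta\varphi(\epsilon,t)|\le C/t$ on a \emph{complex} neighbourhood of $\epsilon=0$, whereas the hypothesis \eqref{technical} only gives a bound for $\epsilon$ in a \emph{real} interval. You acknowledge this and propose to import sharper pointwise decay of $h(\mu_t+\cdot,t)-\omega$ from \cite{Graham.2019}, but that only applies to $F(h)=h^2$ with compact-perturbation data, whereas \autoref{prop2} is stated (and used, in the second part of \autoref{mainthm}) under the hypotheses $\gamma>1$ and \eqref{technical} alone, for general $F$. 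The paper avoids this entirely: \autoref{lem:O} shows that an integral $\int_0^\infty e^{-\epsilon^2t}f(\epsilon,t)\,\diffd t$ admits a polynomial-plus-remainder expansion using only (i) analyticity of $\epsilon\mapsto f(\epsilon,t)$ on a complex neighbourhood of $0$ (which is cheap, from the Morera-type \autoref{lem:a} plus the a priori decay of $h$), and (ii) a \emph{real-$\epsilon$} bound $|f(\epsilon,t)|\le C\min(1,t^{-\alpha-\beta\epsilon})$. It proceeds by repeated integration by parts to lower the exponent, never differentiating in $\epsilon$. Your Taylor-plus-Cauchy route therefore does not close under the stated hypotheses and loses the generality that the proposition claims.
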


The second form \eqref{prop22} gives a relation between
$\mu_t$ and $h_0$ which does not involve the front $h(x,t)$ at any finite
time.  Notice also that the non-linear term $F(h)$ only appears in
$\hat\varphi(\epsilon)$.

The magical relation was introduced in
\cite{BrunetDerrida.2015,BerestyckiBrunetDerrida.2017,BerestyckiBrunetDerrida.2018},
but only for $\epsilon<0$.
It allowed (non-rigorously) to compute the asymptotic expansion of the
position of the front for an arbitrary initial condition, and in
particular to obtain
\eqref{position}. The basic idea is the following:
for $\epsilon<0$, the whole right hand side of \eqref{prop22} can be
written as $\mP(\epsilon)+\mathcal O(\epsilon^3)$ for some polynomial
$\mP(\epsilon)$ if $h_0$ goes to zero fast enough. (Specifically, it can be shown that
the necessary and sufficient condition is $\int\diffd x\,h_0(x) x^3 e^x<\infty$.)
 However, the left
hand side produces very easily some singular terms of $\epsilon$ in a small
$\epsilon$ expansion; it
turns out that $\mu_t$ should satisfy \eqref{position} in order to avoid
all the singular terms up to order $\epsilon^3$.

In this paper, by considering both sides $\epsilon<0$ and
$\epsilon>0$, we can eliminate the unknown polynomial $\mP(\epsilon)$ 
in \eqref{prop22} and obtain \eqref{mainresult}.

The rest of the paper is organized as follow; in \autoref{sec:Phi}, we show that the function $\Phi(c)$ is well
defined, and we give a useful representation. In \autoref{sec:magic}, we
prove the first part of \autoref{prop2}, \textit{i.e.}\@ \eqref{prop21}. We
state and prove some technical lemmas in  \autoref{sec:lem}, which allow us
to finish the proof of  \autoref{prop2} and to prove \autoref{mainthm} in
\autoref{sec:expansion}.
In \autoref{proofprop1t},  we prove \autoref{prop1t}.
Finally, a technical lemma is proved in \autoref{appendix}.

\section{The function \texorpdfstring{$\Phi(c)$}{Φ(c)}}\label{sec:Phi}

\begin{prop}\label{prop1}
For a given initial condition $h_0$ such that $\int \diffd x\, h_0(x)
e^x<\infty$,
let $h(x,t)$ be the solution to \eqref{FKPP}.
For $c\ge2$, the following (finite or infinite) limits exist  and are equal:
\begin{equation}\label{prop11}
\Phi(c):=\lim_{t\to\infty} \sqrt{4\pi t}\,h(ct,t)e^{\big(\frac{c^2}4-1\big)t}
= \lim_{t\to\infty} e^{-t\big(1+\frac {c^2} 4\big)}\int\diffd x\, h(x,t)
e^{\frac c 2 x}\in[0,\infty].
\end{equation}
Furthermore, $\Phi(2)=0$, $\Phi(c)>0$ for $c>2$ and
\begin{equation}
\Phi(c)
<\infty
\quad\iff\quad
\int\diffd x\, h_0(x)e^{\frac c2 x}<\infty.
\label{Phih0}
\end{equation}
The function $c\mapsto \Phi(c)$ is continuous in the domain where it is
finite.
\end{prop}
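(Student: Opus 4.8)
The plan is to work primarily with the second, integrated quantity $I(t):=e^{-t(1+c^2/4)}\int\diffd x\,h(x,t)e^{cx/2}$, since it is more robust than the pointwise value $h(ct,t)$, and then to recover the pointwise statement from it via parabolic regularity. First I would establish that $I(t)$ is well-defined and finite for each fixed $t>0$: because $0<h(x,t)<1$, the only issue is the tail $x\to+\infty$, and by comparison with the linear equation $h(x,t)\le h_{\mathrm{lin}}(x,t)$, where $h_{\mathrm{lin}}$ has initial datum $h_0$; an explicit heat-kernel computation gives $\int h_{\mathrm{lin}}(x,t)e^{cx/2}\diffd x = e^{t(1+c^2/4)}\int h_0(x)e^{cx/2}\diffd x$ whenever the latter integral converges, so $I(t)\le\int h_0(x)e^{cx/2}\diffd x$ under the hypothesis $\int h_0(x)e^x<\infty$ together with $c\ge2$ (monotonicity of $e^{cx/2}$ against the decay of $h_0$ handles $c>2$ only when $\int h_0 e^{cx/2}<\infty$; for $c=2$ it is exactly the stated hypothesis). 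For the $x\to-\infty$ tail, $h_0\to1$ forces divergence of $\int h_0 e^{cx/2}$ at $-\infty$? — no: $e^{cx/2}\to0$ as $x\to-\infty$, so that tail is harmless. Hence $I(t)<\infty$ for all $t$ iff $\int h_0(x)e^{cx/2}\diffd x<\infty$, and $I(t)=\infty$ identically otherwise (again by the comparison, now from below, using that $h$ stays bounded below by the linear solution with a truncated initial datum).

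Next I would prove that $I(t)$ is monotone in $t$, which immediately yields existence of the limit in $[0,\infty]$. Differentiating under the integral and using \eqref{FKPP},
\begin{equation*}
\frac{\diffd}{\diffd t}\Big(e^{-t(1+c^2/4)}\!\int h(x,t)e^{cx/2}\diffd x\Big)
= e^{-t(1+c^2/4)}\!\int\big(\partial_x^2 h+h-F(h)-(1+\tfrac{c^2}4)h\big)e^{cx/2}\diffd x.
\end{equation*}
Integrating the $\partial_x^2 h$ term by parts twice produces $\frac{c^2}{4}\int h e^{cx/2}$ (the boundary terms vanish by the tail bounds above), so the bracket collapses to $-\int F(h)e^{cx/2}\diffd x\le0$ because $F\ge0$. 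Thus $I(t)$ is non-increasing, hence $I(\infty):=\lim_{t\to\infty}I(t)$ exists in $[0,\infty]$; I would then argue this limit is finite precisely when $\int h_0 e^{cx/2}<\infty$, the forward direction being the bound $I(t)\le\int h_0 e^{cx/2}$ just obtained, and the reverse by the identical-infinity observation. This establishes \eqref{Phih0}, modulo identifying the two limits.

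To identify $\lim_t\sqrt{4\pi t}\,h(ct,t)e^{(c^2/4-1)t}$ with $I(\infty)$, I would use the convolution representation of $h$ against the Fisher-KPP semigroup: $h(x,t)=\int G_t(x-y)h_0(y)\diffd y-\int_0^t\!\!\int G_{t-s}(x-y)F(h(y,s))\diffd y\diffd s$ with $G_t$ the heat kernel. Multiplying by $e^{cx/2}$ and integrating reproduces $I(t)$ as the difference of the (explicitly computable) linear contribution $\int h_0 e^{cx/2}$ and a Duhamel integral $\int_0^t e^{-s(1+c^2/4)}\varphi$-type term that converges as $t\to\infty$ exactly when $I(\infty)<\infty$; simultaneously, evaluating the same representation at $x=ct$, the Laplace/saddle-point asymptotics of $G_{t-s}(ct-y)$ (the Gaussian is centred so that the dominant contribution comes from $y$ of order $1$ relative to the front) yield $\sqrt{4\pi t}\,h(ct,t)e^{(c^2/4-1)t}\to$ the same limit. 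Concretely, $\sqrt{4\pi t}\,G_t(ct-y)e^{(c^2/4-1)t}= e^{-(ct-y)^2/(4t)+c^2 t/4 - t}\cdot e^{t}= e^{(c/2)y - y^2/(4t)}\to e^{cy/2}$ pointwise, and dominated convergence (justified by the tail bounds and $F\ge0$) transfers the limit through both the $h_0$ term and the Duhamel term, giving the common value $\Phi(c)=I(\infty)$.

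Finally: $\Phi(2)=0$ follows because at $c=2$ the monotone quantity $I(t)=e^{-2t}\int h(x,t)e^x\diffd x$ tends to $0$ — this is essentially Bramson's centring, since $h(x,t)$ is concentrated near $2t-\frac32\log t$ and $e^{x-2t}$ weights it by $t^{-3/2}$ times an $O(1)$ integral against $\omega$ — I would cite \eqref{Bramson's result}/\eqref{omegaalphabeta} for the $\int\omega(z)e^z<\infty$ needed here. Strict positivity $\Phi(c)>0$ for $c>2$ follows from the lower bound $h\ge h_{\mathrm{lin}}^{\mathrm{trunc}}$ (linear solution started from $h_0$ restricted to a fixed compact set where $h_0>0$), whose weighted integral is a strictly positive constant, so $I(t)$ stays bounded below. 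Continuity of $\Phi$ on its finiteness domain I would get from writing $\Phi(c)=\int h_0(x)e^{cx/2}\diffd x-\int_0^\infty e^{-s(1+c^2/4)}\big(\!\int F(h(y,s))e^{cy/2}\diffd y\big)\diffd s$ and invoking dominated convergence in $c$ on a compact subinterval of the finiteness domain (the factor $e^{-s c^2/4}$ supplies the needed decay uniformly). The main obstacle I anticipate is the rigorous justification of the dominated-convergence steps in the saddle-point identification — controlling the Duhamel integral uniformly in $t$ near $x=ct$ requires a decent a priori bound on $\int F(h(y,s))e^{cy/2}\diffd y$ for large $s$, which is where the steep-initial-condition and $c>2$ hypotheses really bite; everything else is routine comparison and integration by parts.
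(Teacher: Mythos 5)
Your strategy — Duhamel formula, the identity
\begin{equation*}
I(t)=\int h_0(y)e^{cy/2}\,\diffd y-\int_0^t e^{-(1+c^2/4)s}\!\int e^{cy/2}F\big(h(y,s)\big)\,\diffd y\,\diffd s,
\end{equation*}
monotonicity of $I(t)$, and a saddle-point identification of the two limits — is genuinely different from the paper's, which is entirely probabilistic: the paper writes $h$ via Feynman--Kac, conditions on $B_t=y$, reverses the bridge, and after a time change arrives at the explicit formula
\begin{equation*}
\Phi(c)=\int\diffd y\,e^{cy/2}h_0(y)\,\E_0\Big[\exp\Big(-\int_0^\infty\diffd s\,G\circ h(B_s+cs+y,s)\Big)\Big],
\end{equation*}
from which \emph{all} the qualitative claims read off immediately: the inner integral is a.s.\ finite iff $c>2$ (giving $\Phi(2)=0$ and $\Phi(c)>0$ simultaneously), the expectation tends to $1$ as $y\to\infty$ (giving the finiteness criterion \eqref{Phih0}), and dominated convergence in $c$ gives continuity. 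Your integral relation, on the other hand, is precisely the paper's ``magical relation'' \eqref{r>1}--\eqref{bothr} which is proved separately in \autoref{sec:magic}; it is useful but it expresses $\Phi(c)$ as a difference of two quantities, so positivity, the vanishing at $c=2$, and the infinite case are not readable from it.

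There are concrete gaps beyond the one you flag. (i) Your argument that $\Phi(c)>0$ for $c>2$ via ``$h\ge h_{\text{lin}}^{\text{trunc}}$'' does not work: Bramson's conditions give $F(h)\ge0$ and $G(h)\le1$, so the natural comparison from below is with the heat equation $\partial_t u=\partial_x^2 u$, whose $c$-weighted, normalized mass is $e^{-t}\int h_0e^{cy/2}\to0$; any ``truncated linear'' subsolution inherits the same fatal $e^{-t}$ because you cannot keep the full $+h$ growth in a subsolution without also keeping $-F(h)$. Positivity here genuinely uses a probabilistic or pointwise-localisation input, not a straightforward comparison. (ii) Your argument for $\Phi(2)=0$ appeals to Bramson's convergence $h(m_t+\cdot,t)\to\omega$ to control $e^{-2t}\int h\,e^x\,\diffd x$; Bramson's theorem is a locally uniform statement and does not by itself control a weighted $L^1$ norm, so you would need a tightness/uniform-integrability estimate (the paper sidesteps this entirely because in the Feynman--Kac expression the $c=2$ case gives an a.s.\ divergent exponent). (iii) In the saddle-point step, the domination of $\sqrt{t/(t-s)}\,e^{-(cs-y)^2/(4(t-s))}$ near $s\to t$ needs a split argument — it is doable using the $e^{-(1+c^2/4)s}$ decay and an a priori bound on $\int e^{cy/2}F(h(y,s))\,\diffd y$ from \autoref{lem1} with $F(h)\le Ch^{1+p}$, but it is not just dominated convergence as stated; and the case $\int h_0 e^{cx/2}=\infty$ requires a separate argument (a truncation as in the paper's \eqref{33}) for the \emph{pointwise} quantity, since your monotonicity only covers $I(t)$.
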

\noindent\textbf{Remark:} the condition 
$\int \diffd x\,h_0(x)e^{ x}<\infty$ implies, in
particular,  that the front has a velocity 2.

Before doing a rigorous proof, here is a quick and dirty argument to show
that the second limit in \eqref{prop11} is equal to the first:
starting from the integral in that limit, make the change of variable
$x=vt$ (with $v$ being the new variable) and boldly replace $h(vt,t)$ under the integral sign using
the equivalent implied by the first limit to obtain
\begin{equation}\begin{aligned}
\int \diffd x\,h(x,t)e^{\frac  c2x}&=t\int\diffd
v\, h(vt,t)e^{\frac  c2vt}\\
&\simeq \frac t{\sqrt{4\pi t}}\int\diffd
v\,\Phi(v) e^{(1-\frac{v^2}4+\frac c2v)t}=
e^{(1+\frac{c^2}4)t}\frac{\sqrt t}{\sqrt{4\pi}}\int\diffd
v\,\Phi(v) e^{-\frac14(v-c)^2t}.
\end{aligned}
\end{equation}
(The fact that the substitution only makes sense for $v\ge2$ is not a problem
since, clearly, the part of the integral for $v<2$ does not contribute
significantly.)
The remaining integral is dominated by $v$ close to $c$ in the large time
limit. Replacing $\Phi(v)$ by $\Phi(c)$ and computing the remaining
Gaussian integral gives the second limit.

We will need in the proof a bound on how $h(x,t)$ decreases for large $x$:
For $r>0$, introduce
\begin{equation}
g(r,t):=\int \diffd x\, h(x,t)e^{rx}.
\label{defg}
\end{equation}

\begin{lem}
\label{lem1}
For all $x$, all $t>0$, and all $r>0$ such that $g(r,0)=\int \diffd x\,
h_0(x)e^{rx}<\infty$,
\begin{equation}
h(x,t)\le \frac{e^{(1+r^2)t}}{\sqrt{4\pi t}}g(r,0)e^{-rx},\qquad
g(r,t) \le e^{(1+r^2)t} g(r,0).
\label{lem11}
\end{equation}
\end{lem}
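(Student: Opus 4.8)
The plan is to dominate $h$ by the solution of the linearised Fisher-KPP equation and then estimate that linear solution with the Gaussian kernel. The starting observation is that Bramson's conditions \eqref{BramsonsConditions} force $F\ge0$ on $[0,1]$ (since $F(0)=0$ and $F'\ge0$), and that $0<h(\cdot,t)<1$ for $t>0$; hence the term $F(h)$ in \eqref{FKPP} is nonnegative and $h$ is a bounded subsolution of the linear equation $\partial_t u=\partial_x^2 u+u$.

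First I would introduce $u$, the solution of $\partial_t u=\partial_x^2 u+u$ with $u(\cdot,0)=h_0$, which is given explicitly by $u(x,t)=\frac{e^{t}}{\sqrt{4\pi t}}\int\diffd y\,e^{-(x-y)^2/(4t)}h_0(y)$, and argue that $h(x,t)\le u(x,t)$ for all $x$ and all $t>0$. The cleanest way to see this is through Duhamel's formula, $h(\cdot,t)=e^{t(\partial_x^2+1)}h_0-\int_0^t e^{(t-s)(\partial_x^2+1)}F(h(\cdot,s))\,\diffd s$, where the subtracted term is nonnegative because $F(h)\ge0$ and the heat semigroup is positivity-preserving; alternatively one invokes the comparison principle for $\partial_t-\partial_x^2-1$ in the class of bounded functions, using $0\le h_0\le1$.

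Then the two bounds in \eqref{lem11} are extracted from the explicit formula for $u$. For the pointwise bound I would complete the square, $-\frac{(x-y)^2}{4t}=-\frac{(x-y-2rt)^2}{4t}+r^2t-r(x-y)\le r^2t-r(x-y)$, so that $e^{-(x-y)^2/(4t)}\le e^{r^2t}e^{-r(x-y)}$; inserting this and pulling $e^{-rx}$ out of the integral turns the bound $h(x,t)\le u(x,t)$ into $h(x,t)\le\frac{e^{(1+r^2)t}}{\sqrt{4\pi t}}g(r,0)e^{-rx}$. For the integral bound I would multiply $h\le u$ by $e^{rx}$, integrate in $x$, swap the integrals by Fubini (the integrand is nonnegative), and use the Gaussian identity $\int\diffd z\,\frac1{\sqrt{4\pi t}}e^{-z^2/(4t)}e^{rz}=e^{r^2t}$ to get $g(r,t)\le\int\diffd x\,u(x,t)e^{rx}=e^{(1+r^2)t}g(r,0)$. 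Both computations need $g(r,0)<\infty$, which is exactly the hypothesis.

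The only step that is not completely mechanical is the comparison $h\le u$: one has to be a little careful to work in a class (bounded functions on $\R$) where the comparison principle, or equivalently the uniqueness of the Duhamel solution, actually applies. Given that $0\le h_0\le1$ forces $0<h<1$ and $0\le u\le e^t$, this is standard, and everything else is routine heat-kernel bookkeeping.
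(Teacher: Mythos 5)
Your proof is correct and follows essentially the same route as the paper: compare $h$ to the solution of the linearised equation via the comparison principle, then control the heat kernel by completing the square to obtain both the pointwise and the integrated bound. The paper does the two inequalities in one pass (writing the shifted Gaussian $e^{-(x-y-2rt)^2/(4t)}$ and either bounding it by $1$ or integrating it to $1$), whereas you split them, but the underlying algebra is identical, and your extra care in justifying $h\le u$ via Duhamel is just a slightly more detailed version of the paper's appeal to the comparison principle.
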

\begin{proof}
Using the comparison principle, one obtains that $h(x,t)\le h_\text{lin}(x,t)$, where
$h_\text{lin}(x,t)$ is the solution to $\partial_t
h_\text{lin}=\partial_x^2 h_\text{lin}+h_\text{lin}$ with initial condition
$h_0$. Solving for $h_\text{lin}$, we get
\begin{equation}
h(x,t)\le \frac{e^t}{\sqrt{4\pi t}} \int \diffd y \, h_0(y)
e^{-\frac{(x-y)^2}{4t}},
\label{29}
\end{equation}
and then,
\begin{equation}
h(x,t)e^{rx}\le \frac{e^t}{\sqrt{4\pi t}} \int \diffd y \, h_0(y) e^{ry}\times e^{r(x-y)-\frac{(x-y)^2}{4t}}
=\frac{e^{(1+r^2)t}}{\sqrt{4\pi t}} \int \diffd y \, h_0(y) e^{ry}\times
e^{-\frac{(x-y-2rt)^2}{4t}}.
\end{equation}
Both inequalities in \eqref{lem11} are obtained from that last relation,
respectively by writing that the Gaussian term is smaller than 1, or by
integrating over $x$.
\end{proof}

\begin{proof}[Proof of \autoref{prop1}]
We write the non-linearity in \eqref{FKPP} as $F(h)=h\times
G(h)$. From \eqref{BramsonsConditions}, the function $G$, defined on $[0,1]$, is continuous, satisfies $0\le G(h)\le 1$, $G(h)=\mathcal
O(h^p)$ for some $p>0$ as $h\to0$ and $G(1)=1$. To avoid parentheses, we will write
$G(h(x,t))$ as $G\circ h(x,t)$ using the composition operator $\circ $.
We write the solution $h(x,t)$ of \eqref{FKPP} using the Feynman-Kac
representation (see \cite[Theorem 5.3 p. 148]{Friedman.1975} or, for
a short proof, \cite[Proposition 3.1]{BerestyckiBrunetPenington.2019}):
\begin{equation}\label{FK1}
h(x,t)=e^t \E_x\Big[h_0(B_t) e^{-\int_0^t\diffd
s\,G\circ h(B_s,t-s)}\Big],
\end{equation}
where under $\E_x$,  $B$ is a Brownian with diffusivity $\sqrt2$
started from $x$.
(so that $\E_x(B_t^2)=x^2+2t$.)

In \eqref{FK1}, we condition the Brownian to end at $B_t=y$ and we
integrate over $y$:
\begin{equation}
h(x,t)=e^t\int\diffd y \frac{e^{-\frac{(x-y)^2}{4t}}}
{\sqrt{4\pi t}}h_0(y) \E_{t:x\to y}\Big[e^{-\int_0^t\diffd s\,G\circ h(B_s,t-s)}\Big]
\end{equation}
where, under $\E_{t:x\to y}$, $B$ is a Brownian bridge going from $x$ to $y$
in a time $t$, with a diffusivity $\sqrt2$. We reverse time and remove the
linear part from the bridge:
\begin{align}
h(x,t)&=e^t\int\diffd y \frac{e^{-\frac{(x-y)^2}{4t}}}
{\sqrt{4\pi t}}h_0(y) \E_{t:y\to x}\Big[e^{-\int_0^t\diffd s\,G\circ h(B_s,s)}\Big]
\label{FK2}
\\
&=\int\diffd y \frac{e^{t-\frac{(x-y)^2}{4t}}}{\sqrt{4\pi t}}h_0(y)
        \E_{t:0\to 0}\Big[e^{-\int_0^t\diffd s\,G\circ h\big(B_s+(x-y)\tfrac
st+y,s\big)}\Big].
\end{align}
Then, at $x=ct$,
\begin{equation}
h(ct,t)=\frac{e^{(1-\frac{c^2}4)t}}{\sqrt{4\pi t}}
\int\diffd y\,e^{\frac c 2y-\frac{y^2}{4t}}
h_0(y)\E_{t:0\to 0}\Big[e^{ -\int_0^t\diffd s\,G\circ h\big(B_s+c s-y\tfrac
st+y,s\big)}\Big]
\end{equation}
We move the prefactors to the left hand side and write
the Brownian bridge as
a time-changed Brownian path
\begin{equation}
\sqrt{4\pi t}\, e^{(\frac{c^2}4-1)t}
h(ct,t)=
\int\diffd y\,e^{\frac c 2y-\frac{y^2}{4t}}
h_0(y)\E_{0}\Big[e^{ -\int_0^t\diffd s\,G\circ
h\big(\frac{t-s}tB_{\frac{ts}{t-s}}+c s-y\tfrac
st+y,s\big)}\Big].
\label{hctt}
\end{equation}

For any $y$, any $c\ge2$, and almost all Brownian path $B$, one has 
\begin{equation}
\int_0^t\diffd s\,G\circ
h\Big(\frac{t-s}tB_{\frac{ts}{t-s}}+c s-y\tfrac
st+y,s\Big) \to \int_0^\infty\diffd s\,G\circ h\big(B_{s}+c s+y,s\big)
\qquad\text{as $t\to\infty$}.
\label{domconv}
\end{equation}
Indeed, first consider the case $c>2$, pick $\tilde c \in (2,c)$ and $t_0$
such that $c-y/t_0>\tilde c$.
Recall that, for almost all path $B$, there exists a constant $A$
(depending on $B$) such that $|B_u|\le
A(1+u^{0.51})$ for all $u$; this implies that
$\frac{t-s}t\big|B_{\frac{ts}{t-s}}\big|\le
A(1+s^{0.51})$ for all $t$ and all $s<t$. Then
\begin{equation}
\frac{t-s}tB_{\frac{ts}{t-s}}+c s-y\tfrac
st+y \ge \tilde c s + C+y\quad\text{for all $t>t_0$ and all $s\in(0,t)$,}
\end{equation}
where $C$  is some constant depending on
$B$. (Indeed, the function $s\mapsto cs-y s/t-\tilde
c s -As^{0.51}$ is uniformly bounded from below for $t>t_0$.)
Using 
\eqref{lem11} for $r=1$, we obtain that 
\begin{equation}
h(\frac{t-s}tB_{\frac{ts}{t-s}}+c s-y\tfrac
st+y,s)\le \frac{C}{\sqrt s} e^{-(\tilde c-2)s-y}\quad\text{for all $t>t_0$ and
all $s\in(0,t)$},
\end{equation}
 with $C$ another
constant depending on $B$.
As $G(h)=\mO(h^p)$ for some $p>0$ as $h\to0$ and $G(1)=1$, there exists
a constant $C$ such that $G(h)\le C h^{\min(1,p)}$. Then, we see by
dominated convergence that \eqref{domconv} holds for $c>2$, and furthermore
we see that the right hand side is smaller than $C e^{-\min(1,p)y}$ for
some constant $C$ depending on $B$.

For $c=2$, the right hand side of \eqref{domconv} is $+\infty$. Indeed,
$B_s+cs+y$ is infinitely often smaller than $2s-\sqrt s$, where the front
$h$ is close to 1. Then, noticing that \eqref{domconv} with the upper limits
of both integrals replaced by some $T>0$ clearly holds by dominated
convergence, and that, by choosing $T$ large enough, the right hand side is
arbitrarily large, we see that the left hand side of \eqref{domconv} must
diverge as $t\to\infty$.

From \eqref{domconv}, we immediately obtain by dominated convergence
\begin{equation}
\E_{0}\Big[e^{ -\int_0^t\diffd s\,G\circ
h\big(\frac{t-s}tB_{\frac{ts}{t-s}}+c s-y\tfrac
st+y,s\big) }\Big] \to
\E_{0}\Big[e^{ - \int_0^\infty\diffd s\,G\circ h(B_{s}+c s+y,s)
}\Big] \qquad\text{as $t\to\infty$},
\end{equation}
where the right hand side is 0 if $c=2$ and positive if $c>2$.
(As we have shown, the integral in the exponential is almost surely
infinite if $c=2$, and almost surely finite if $c>2$.) Furthermore, for
$c>2$, the right hand side converges to 1 as $y\to\infty$. (Recall that,
for $c>2$, the integral in the exponential is smaller than
$Ce^{-\min(1,p)y}$.)

If $\int h_0(y)e^{cy/2}\diffd y<\infty$, then a last application of
dominated convergence in \eqref{hctt} shows that the first limit defining
$\Phi(c)$ in \eqref{prop11} does exist and is
given by:
\begin{equation}
\Phi(c):=\lim_{t\to\infty} {\sqrt{4\pi t}}\, e^{(\frac{c^2}4-1)t}h(ct,t)         =\int\diffd y\,e^{\frac c 2y}
h_0(y)\E_0\Big[e^{ -\int_0^\infty\diffd s\,G\circ h(B_s+c s
+y,s)}\Big]<\infty,
\label{1stexpressionphi}
\end{equation}
and furthermore $\Phi(2)=0$ and $\Phi(c)>0$ for $c>2$. Note that \cite{BerestyckiBrunetCortinesMallein.2022} gives a similar
expression.

We now assume that $\int h_0(y)e^{cy/2}\diffd y=\infty$ and show that the
limit of \eqref{hctt}
diverges. Notice that we must be in the $c>2$ case since we also assumed
that $\int h_0(y)e^y\,\diffd y<\infty$.
Cutting the integral in \eqref{hctt} at some arbitrary value $A$ and then 
sending $t\to\infty$ gives
\begin{equation}
\liminf_{t\to\infty} 
 {\sqrt{4\pi t}}\, e^{(\frac{c^2}4-1)t}h(ct, t)
=\int_{-\infty}^A\diffd y\,e^{\frac c 2y}
h_0(y)\E_0\Big[e^{ -\int_0^\infty\diffd s\,G\circ h(B_s+c s
+y,s)}\Big].
\label{33}
\end{equation}
As the expectation appearing in the integral goes to
1 as $y\to\infty$, the hypothesis  $\int h_0(y)e^{cy/2}\diffd y=\infty$
implies that the right hand side diverges as $A$ to $\infty$, and then that
$\Phi(c)$ exists and is infinite.

Using the same methods, one can show from \eqref{1stexpressionphi} that $\Phi(c)$ is a continuous
function (in the range of $c$ where $\Phi$ is finite)
 by first showing that $\int_0^\infty\diffd
s\,G\circ h(B_s+c_n s +y,s)\to\int_0^\infty\diffd s\,G\circ
h(B_s+c s +y,s)$ if $c_n\to c$, by dominated convergence, using the
same bounds as above (specifically that the integrands are uniformly bounded by an
exponentially decreasing function of $s$ if $c>2$ and that the result is
infinity if $c=2$.) 

To show that the second expression in \eqref{prop11} is equal to the first, start again from \eqref{FK2}:
$$\begin{aligned}
h(x,t)=e^t\int\diffd y \frac{e^{-\frac{(x-y)^2}{4t}}}
{\sqrt{4\pi t}}h_0(y) \E_{t:y\to x}\Big[e^{-\int_0^t\diffd s\,G\circ h(B_s,s)}\Big]
=e^t\int\diffd y \,
h_0(y) \E_{y}\Big[e^{-\int_0^t\diffd s\,G\circ h(B_s,s)}\delta(B_t-x)\Big].
\end{aligned}$$

Then
\begin{equation}
\begin{aligned}
e^{-t\big(1+\frac {c^2} 4\big)}\int\diffd x\, h(x,t) e^{\frac c 2 x}
&=e^{-\frac{c^2}4t}\int\diffd y \,
h_0(y) \E_{y}\Big[e^{-\int_0^t\diffd s\,G\circ h(B_s,s)}e^{\frac c 2 B_t}\Big]
\\&=\int\diffd y\,e^{\frac
c2 y}h_0(y)\E_y\Big[\text
e^{-\int_0^t\diffd s\,G\circ h(B_s+cs,s)}\Big].
\end{aligned}\end{equation}
where the last transform is through Girsanov's Theorem (or a change of
probability of the Brownian).
Taking the limit $t\to\infty$ is immediate and gives back the
expression of $\Phi(c)$ written in \eqref{1stexpressionphi}.
\end{proof}

\section{Magical relation}\label{sec:magic}
In this section, we prove the first part of \autoref{prop2}. Many of the
arguments presented here were already written in
\cite{BerestyckiBrunetDerrida.2018}
for the case $\epsilon<0$.

Recall the definitions \eqref{defgamma} of $\gamma$ and \eqref{defg} of
$g(r,t)$:
\begin{equation}
\gamma:=\sup\Big\{r; \int \diffd x\, h_0(x) e^{rx}<\infty\Big\},
\qquad
g(r,t):=\int \diffd x\, h(x,t) e^{rx}.
\end{equation}
According to \autoref{lem1},
\begin{equation}
g(r,t)\le e^{(1+r^2)t}g(r,0)<\infty\qquad\text{for $r\in(0,\gamma)$ and  $t\ge0$}. 
\end{equation}

We wish to write an expression for $\partial_t g(r,t)$, and the first step
is to justify that we can differentiate under the integral sign:
\begin{equation}
\partial_t g(r,t) =\int\diffd x\,\partial_t h(x,t) e^{rx}=\int\diffd x\,\big[\partial_x^2 h(x,t) + h(x,t)-F[h(x,t)]\big] e^{rx}
\qquad\text{for $0<r<\gamma$},
\label{36}
\end{equation}
and then (still assuming $0<r<\gamma$) that we can integrate twice by parts
the $\partial_x^2h$ term:
\begin{equation}
\partial_t g(r,t) =\int\diffd x\, \big[(r^2 h(x,t) + h(x,t)-F[h(x,t)]\big] e^{rx}
=(1+r^2)g(r,t) -\int\diffd x\,F[h(x,t)]e^{rx}.
\label{37}
\end{equation}
Both steps \eqref{36} and \eqref{37} are justified by using bounding
functions provided by the following
lemma with $\beta$ chosen in $(r,\gamma)$:
\begin{lem}\label{lem2}
Let $\beta\in(0,\gamma)$. For $t>0$, the quantities $h(x,t)$, $|\partial_x
h(x,t)|$, $|\partial_x^2h(x,t)|$ and $|\partial_t h(x,t)|$ are bounded by
$A(t)\max(1,e^{-\beta x})$ for some locally bounded function $A$.
\end{lem}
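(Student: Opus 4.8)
The plan is to obtain the bound on $h$ itself from Lemma 1, and then bootstrap to the derivatives using standard parabolic regularity/heat-kernel representations. For the zeroth-order bound: by Lemma 1 applied with any fixed $r\in(\beta,\gamma)$ one has $h(x,t)\le \frac{e^{(1+r^2)t}}{\sqrt{4\pi t}}g(r,0)\,e^{-rx}$, and since $h\le 1$ everywhere, $h(x,t)\le \min\!\big(1, C_0(t)e^{-rx}\big)\le C_0(t)\max(1,e^{-\beta x})$ with $C_0$ locally bounded on $(0,\infty)$ (it blows up like $t^{-1/2}$ near $0$, which is allowed); note $e^{-rx}\le\max(1,e^{-\beta x})$ for $x\ge 0$, while for $x<0$ we use $h\le 1\le\max(1,e^{-\beta x})$, so after absorbing the region $x<0$ the stated form holds. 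This is the easy half.

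For the derivatives the plan is to differentiate the mild (Duhamel) formulation
\begin{equation}
h(x,t)=\int \diffd y\,\frac{e^{-\frac{(x-y)^2}{4t}}}{\sqrt{4\pi t}}\,h_0(y)
+\int_0^t\diffd s\int\diffd y\,\frac{e^{-\frac{(x-y)^2}{4(t-s)}}}{\sqrt{4\pi(t-s)}}\,\big[h(y,s)-F[h(y,s)]\big]
\label{duhamel}
\end{equation}
(the factor $e^{t}$ versus the $+h$ term can be handled either way; I will keep the $+h-F(h)$ source as written). Differentiating the heat kernel in $x$ once or twice produces kernels $\partial_x p_\tau(x-y)$ and $\partial_x^2 p_\tau(x-y)$ whose absolute values, after integrating in $y$ against a function bounded by $C\max(1,e^{-\beta y})$, give $\int\diffd y\,|\partial_x^k p_\tau(x-y)|\max(1,e^{-\beta y})\le C\,\tau^{-k/2}\,e^{C\tau}\max(1,e^{-\beta x})$: this is the elementary Gaussian computation already used in the proof of Lemma 1, now with one or two extra powers of $(x-y)$ from the derivative, which contribute the $\tau^{-k/2}$ and do not spoil the exponential tail in $x$. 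Since $F[h]\le h\le$ the same bound (using $F(h)\le h$ from \eqref{BramsonsConditions}), the source term obeys the same estimate as $h$. For $\partial_x h$ the time integral $\int_0^t (t-s)^{-1/2}\diffd s$ converges, so one gets $|\partial_x h(x,t)|\le A_1(t)\max(1,e^{-\beta x})$ directly. For $\partial_x^2 h$ the naive bound gives a non-integrable $(t-s)^{-1}$; the standard fix is to subtract: write $\partial_x^2 h(x,t)$ by putting one derivative on the kernel and one on the source $u(y,s):=h(y,s)-F[h(y,s)]$, i.e. $\int_0^t\int \partial_x p_{t-s}(x-y)\,\partial_y u(y,s)\,\diffd y\,\diffd s$, plus the (smooth, if $h_0$ is) contribution from the initial datum; since $\partial_y u=\partial_y h\,(1-F'(h))$ is already controlled by the $\partial_x h$ bound just obtained, this integral has only a $(t-s)^{-1/2}$ singularity and converges, giving $|\partial_x^2 h(x,t)|\le A_2(t)\max(1,e^{-\beta x})$. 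Finally $\partial_t h=\partial_x^2 h+h-F[h]$ pointwise (interior parabolic smoothness), so $|\partial_t h|$ inherits the bound with $A(t)=A_2(t)+C_0(t)$.

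The main obstacle is the $\partial_x^2 h$ estimate: the direct two-derivatives-on-the-kernel representation is not integrable in time, so one genuinely needs the bootstrap — first establish the $\partial_x h$ bound, then feed $\partial_y u$ into a once-differentiated kernel. A secondary nuisance is that if $h_0$ is merely bounded (not smooth) the leading heat-semigroup term $e^{t\Delta}h_0$ is smooth for $t>0$ but its derivatives behave like $t^{-k/2}$ near $t=0$; this is harmless because the lemma only claims a \emph{locally bounded} $A(t)$ on $(0,\infty)$, so all the $t^{-1/2}$- and $t^{-1}$-type blow-ups at $t=0$ are permitted. One should also check that the implicit constants in the Gaussian estimates are uniform for $x$ in all of $\R$ (they are, since $\max(1,e^{-\beta\cdot})$ is log-Lipschitz-like and the Gaussian convolution of it is comparable to itself up to an $e^{C\tau}$ factor), which is the only place the precise shape $\max(1,e^{-\beta x})$ — rather than a pure exponential — is used.
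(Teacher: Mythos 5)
Your overall strategy matches the paper's (which follows Uchiyama): use the Duhamel/mild formulation, differentiate once or twice, and bootstrap the $\partial_x^2 h$ bound by integrating by parts so that one derivative lands on the kernel and the other on the source $u=h-F(h)$, i.e.\ work with $f'(h)\,\partial_x h$ rather than two derivatives on the kernel. That part of your plan is exactly right.

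There is, however, a genuine gap in the treatment of the initial-datum term
\begin{equation*}
\int \diffd y\;\partial_x^{k} p_t(x-y)\,h_0(y),\qquad k=1,2.
\end{equation*}
Your argument bounds the Gaussian convolution of a function that is \emph{pointwise} dominated by $C\max(1,e^{-\beta y})$. The Duhamel source $u(y,s)=h(y,s)-F[h(y,s)]$ does satisfy such a pointwise bound for $s>0$ by \autoref{lem1}, but $h_0$ itself does \emph{not}. The only hypotheses on $h_0$ are $0\le h_0\le1$ and $\int h_0(y)\,e^{ry}\,\diffd y<\infty$ for $r<\gamma$, which is an integral condition; it permits $h_0$ to equal $1$ on a thin set of intervals going off to $+\infty$, so no pointwise bound $h_0(y)\le C\,e^{-\beta y}$ need hold. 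Your sentence about "a function bounded by $C\max(1,e^{-\beta y})$" is therefore unjustified for the $h_0$ term, and the "smooth, if $h_0$ is" and "$t^{-k/2}$ near $t=0$" remarks address only the short-time singularity, not the spatial decay. This is precisely where the paper inserts a Hölder-inequality step: one writes $|\partial_x p_t(x-y)|\,h_0(y) = \bigl(h_0(y)^{1/p}e^{\beta y}\bigr)\bigl(|\partial_x p_t(x-y)|\,h_0(y)^{1/q}e^{-\beta y}\bigr)$ with $\beta p<\gamma$, so that the first factor is controlled by $g(\beta p,0)^{1/p}$ and the second factor, after bounding $h_0\le1$, yields the desired $e^{-\beta x}$ from a Gaussian computation. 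Without this (or an equivalent splitting argument that converts integral decay of $h_0$ into pointwise decay after convolution), the claimed bound $|\partial_x^k h(x,t)|\le A(t)\max(1,e^{-\beta x})$ does not follow for the $h_0$ piece, and the proof is incomplete.
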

The proof of \autoref{lem2} is given in \autoref{appendix}.

Recall  the definition \eqref{defphi} of $\varphi$:
\begin{equation}
\varphi(\epsilon,t)   :=\int\diffd z\, F[h(\mu_t+z,t)]e^{(1+\epsilon)z};
\end{equation}
we have
\begin{equation}
\int\diffd x\,F[h(x,t)]e^{rx}=e^{r\mu_t}\int\diffd
z\,F[h(\mu_t+z,t)]e^{rz}=e^{r\mu_t}\varphi(r-1,t),
\end{equation}
and so, in \eqref{37},
\begin{equation}
\partial_t g(r,t) =(1+r^2)g(r,t) - e^{r\mu_t}\varphi(r-1,t).
\end{equation}
Integrating, we obtain
\begin{equation}
g(r,t)e^{-(1+r^2)t}
=g(r,0) - \int_0^t \diffd s\, \varphi(r-1,s) e^{r\mu_s-(1+r^2)s}.
\label{54}
\end{equation}
We now send $t\to\infty$ in \eqref{54}, distinguishing two cases
\begin{itemize}
\item If $\gamma>1$ and $1\le r<\gamma$;\quad notice that the left hand
side  is the expression appearing in the second limit in
\eqref{prop11} with $c=2r$. This implies that
\begin{equation}
\Phi(2r) = g(r,0) -\int_0^\infty \diffd s\, \varphi(r-1,s) e^{r\mu_s-(1+r^2)s}
\qquad\text{if $1\le r<\gamma$}.
\label{r>1}
\end{equation}
\item If $0<r<\min(1,\gamma)$;\quad we claim that the left hand
side of \eqref{54} goes to 0 as $t\to\infty$, and so:
\begin{equation}
0 = g(r,0) -\int_0^\infty \diffd s\, \varphi(r-1,s) e^{r\mu_s-(1+r^2)s}
\qquad\text{if $0<r<\min(1,\gamma)$}.
\label{r<1}
\end{equation}
Indeed,
take $\beta\in\big(r,\min(1,\gamma)\big)$.
Applying \eqref{lem11} with $\beta$ instead of $r$, we have
\begin{equation}
h(x,t)\le\min\left[1, \frac{e^{(1+\beta^2)t}}{\sqrt{4\pi
t}}g(\beta,0)e^{-\beta x}\right].
\label{hmin}
\end{equation}
For $t$ given, let $X$ be the point where both expressions inside the min are equal:
\begin{equation}
e^{\beta X}=\frac{e^{(1+\beta^2)t}}{\sqrt{4\pi
t}}g(\beta,0).
\end{equation}
We obtain from \eqref{hmin}
\begin{equation}
g(r,t)=\int\diffd x\,h(x,t)e^{rx}\le
\frac{e^{rX}}r+ \frac{e^{(1+\beta^2)t}}{\sqrt{4\pi
t}}g(\beta,0)\frac{e^{-(\beta-r) X}}{\beta-r}
=\left(\frac1r+\frac1{\beta-r}\right)e^{rX}
=C\frac{e^{r(\beta^{-1}+\beta)t}}{t^{\frac r{2\beta}}}
,
\end{equation}
where $C$ is some quantity depending on $r$ and $\beta$, but independent of
time. As $\beta>r$ and as
the function $\beta\to \beta^{-1}+\beta$ is decreasing for $\beta<1$, we
obtain
$r(\beta^{-1}+\beta)<r(r^{-1}+r)=1+r^2$. We conclude that, indeed, the left hand side of
\eqref{54} goes to zero as $r\to\infty$. 
\end{itemize}

Combining \eqref{r>1} and \eqref{r<1}, we have shown that
\begin{equation}
\int_0^\infty \diffd s\, \varphi(r-1,s) e^{r\mu_s-(1+r^2)s}
 = g(r,0) -\indic{r>1}\Phi(2r)\qquad\text{for $r\in(0,\gamma)$}.
\label{bothr}
\end{equation}
(Recall that $\Phi(2)=0$, hence the right hand side is continuous at
$r=1$.)
Writing now that $r\mu_s-(1+r^2)s= r(\mu_s-2s)-(1-r)^2s$, and taking
$r=1+\epsilon$ and $s=t$ in \eqref{bothr}, we obtain
\begin{equation}
\int_0^\infty\diffd t \,\varphi(\epsilon,t)e^{-\epsilon^2
t +(1+\epsilon)(\mu_t-2t)}=
g(1+\epsilon,0)
-\indic{\epsilon>0}\Phi(2+2\epsilon)\quad\text{for
$\epsilon\in(-1,\gamma-1)$},
\end{equation}
which is the first part \eqref{prop21} of \autoref{prop2}.

To prove the second part of the proposition, we need to show that, for some
polynomial $\mP(\epsilon)$,
\begin{equation}
\int_0^\infty\diffd t \,\big[\varphi(\epsilon,t)-\hat\varphi(\epsilon)\big]e^{-\epsilon^2
t +(1+\epsilon)(\mu_t-2t)}=\mP(\epsilon)+\mO(\epsilon^3),
\label{42}
\end{equation}
if $ \big|\varphi(\epsilon,t)-\hat\varphi(\epsilon)\big|\le\frac C t $ for
$t$ large enough and
$\epsilon$ in some real neighbourhood of 0 and if $\gamma>1$,
which implies that 
$\mu_t = 2t-\frac32\log t +a +o(1)$ (by Bramson's result).
To do so, we need several technical lemmas. We
thus take a pause in the proof of \autoref{prop2} to state and prove these
lemmas, and we resume in \autoref{sec:expansion}.

\section{Some technical lemmas}\label{sec:lem}

We begin by recalling a classical result on the analyticity of functions
defined by an integral:

\begin{lem}\label{lem:a}
Let $f(\epsilon,t)$ be a family of functions such that
\begin{itemize}
\item $\epsilon\mapsto f(\epsilon,t)$ is analytic on some simply
connected open domain $U$ of $\C$ (independent of $t$) for almost all $t\in \R$,
\item $|f(\epsilon,t)|\le g(t)$ for all $\epsilon\in U$, where $g$ is
some integrable function: $\int g(t)\,\diffd t <\infty$.
\end{itemize}
Then, $\epsilon\mapsto F(\epsilon):=\int\diffd t\, f(\epsilon,t)$ is
analytic on $U$.
\end{lem}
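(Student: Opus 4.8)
The statement is the classical "analyticity under the integral sign" result, and the natural route is through Morera's theorem combined with Fubini. First I would observe that since $\int g(t)\,\diffd t<\infty$ and $|f(\epsilon,t)|\le g(t)$ uniformly in $\epsilon\in U$, the integral $F(\epsilon)=\int\diffd t\,f(\epsilon,t)$ is absolutely convergent for every $\epsilon\in U$, so $F$ is well defined on $U$. Next I would check continuity of $F$ on $U$: if $\epsilon_n\to\epsilon$ in $U$, then $f(\epsilon_n,t)\to f(\epsilon,t)$ for almost all $t$ (by continuity of $\epsilon\mapsto f(\epsilon,t)$, which follows from analyticity), and the integrands are dominated by $g$, so dominated convergence gives $F(\epsilon_n)\to F(\epsilon)$. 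Hence $F$ is continuous on $U$.

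The core step is to verify that $\oint_\Gamma F(\epsilon)\,\diffd\epsilon=0$ for every closed triangle (or rectangle) $\Gamma$ contained in $U$. Since $\Gamma$ is compact and $U$ is open, such a $\Gamma$ together with its interior lies in $U$. I would write
\begin{equation}
\oint_\Gamma F(\epsilon)\,\diffd\epsilon=\oint_\Gamma\Big(\int\diffd t\,f(\epsilon,t)\Big)\diffd\epsilon
=\int\diffd t\,\Big(\oint_\Gamma f(\epsilon,t)\,\diffd\epsilon\Big),
\end{equation}
where the interchange of the two integrations is justified by Fubini's theorem: the map $(\epsilon,t)\mapsto f(\epsilon,t)$ is jointly measurable (measurable in $t$ for each $\epsilon$, continuous in $\epsilon$), and $\int_\Gamma\int |f(\epsilon,t)|\,\diffd t\,|\diffd\epsilon|\le \mathrm{length}(\Gamma)\cdot\int g(t)\,\diffd t<\infty$. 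For almost every fixed $t$, the function $\epsilon\mapsto f(\epsilon,t)$ is analytic on the simply connected domain $U$, so by Cauchy's theorem $\oint_\Gamma f(\epsilon,t)\,\diffd\epsilon=0$. Therefore $\oint_\Gamma F(\epsilon)\,\diffd\epsilon=0$ for every such $\Gamma$, and since $F$ is continuous on $U$, Morera's theorem yields that $F$ is analytic on $U$.

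The only genuinely delicate point is the joint measurability needed to apply Fubini; this is where I would be most careful, though it is routine. A function that is measurable in one variable and continuous in the other is Carathéodory and hence jointly measurable, so the hypothesis already supplies what is needed. (Simple connectedness of $U$ is used only to ensure that Cauchy's theorem applies to arbitrary closed contours, and in fact is not essential since one may work with small triangles; but it does no harm to assume it.) Everything else—the domination bound, dominated convergence, the length estimate—is immediate from the two stated hypotheses.
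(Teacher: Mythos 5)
Your proof follows exactly the same route as the paper's: interchange $\oint$ and $\int\diffd t$ by Fubini, note that $\oint_\Gamma f(\epsilon,t)\,\diffd\epsilon=0$ by Cauchy's theorem for a.e.\ $t$, and conclude by Morera. You are slightly more careful than the paper in also verifying continuity of $F$ (which Morera requires) and in flagging the joint measurability needed for Fubini, but the argument is the same.
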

\begin{proof}
On any closed path $\gamma$ in $U$,
one has with Fubini
\begin{equation}
\oint_\gamma F(\epsilon) \,\diffd\epsilon=\int\diffd t \oint_\gamma
\diffd\epsilon\,f(\epsilon,t).
\end{equation}
This last integral is 0 since $\epsilon\mapsto f(\epsilon,t)$ is analytic
and $U$ is simply connected. Then, by Morera's theorem, $F$ is analytic.
\end{proof}

The next lemma states that some functions of $\epsilon$ which are variations
on the incomplete gamma functions have small $\epsilon$ expansions with
only one or two singular terms
\begin{lem}\label{lem:1s}
Let $\alpha,\beta$ be real numbers such that either
$\alpha\not\in\{1,2,3,\ldots\}$, or $\beta\ne0$. There exist
functions $\epsilon\mapsto \mA_{\alpha,\beta}(\epsilon)$ and
$\epsilon\mapsto \tilde \mA_{\alpha,\beta}(\epsilon)$ which are analytic around
$\epsilon=0$, such that for $\epsilon$ real, non-zero and $|\epsilon|$
small enough,
\begin{align}\label{lemeq1}
\int_1^\infty \diffd t\, e^{-\epsilon^2 t}
\frac1{t^{\alpha+\beta\epsilon}}&=
\lvert\epsilon\rvert^{2\alpha-2+2\beta\epsilon}\Gamma(1-\alpha-\beta\epsilon)
+\frac{\indic{\alpha=1}}{\beta\epsilon}
+\mA_{\alpha,\beta}(\epsilon),
\\
\label{lemeq2}
\int_1^\infty \diffd t\, e^{-\epsilon^2 t}
\frac{\log t}{t^{\alpha+\beta\epsilon}}&=
\lvert\epsilon\rvert^{2\alpha-2+2\beta\epsilon}\Big[-2\log\lvert\epsilon\rvert
\Gamma(1-\alpha-\beta\epsilon)+\Gamma'(1-\alpha-\beta\epsilon)\Big]
+\frac{\indic{\alpha=1}}{(\beta\epsilon)^2}+\tilde \mA_{\alpha,\beta}(\epsilon).
\end{align}
\end{lem}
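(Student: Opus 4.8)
The plan is to reduce everything to the classical integral representation of the incomplete gamma function and then extract the singular part by hand. Write $\nu := \alpha + \beta\epsilon$ and, in the integral $\int_1^\infty e^{-\epsilon^2 t} t^{-\nu}\,\diffd t$, substitute $u = \epsilon^2 t$ to obtain $\lvert\epsilon\rvert^{2\nu-2}\int_{\epsilon^2}^\infty e^{-u} u^{-\nu}\,\diffd u = \lvert\epsilon\rvert^{2\nu-2}\Gamma(1-\nu,\epsilon^2)$, the upper incomplete gamma function. When $\Re(1-\nu) > 0$, i.e.\ $\alpha < 1$, we have $\Gamma(1-\nu,\epsilon^2) = \Gamma(1-\nu) - \gamma(1-\nu,\epsilon^2)$ where $\gamma$ is the lower incomplete gamma, and the lower piece has the absolutely convergent series $\gamma(1-\nu,\epsilon^2) = \sum_{k\ge 0} \frac{(-1)^k}{k!}\frac{(\epsilon^2)^{1-\nu+k}}{1-\nu+k}$; multiplying by $\lvert\epsilon\rvert^{2\nu-2}$ turns this into $\sum_{k\ge 0}\frac{(-1)^k}{k!}\frac{\epsilon^{2k}}{1-\nu+k}$, which is analytic in $\epsilon$ near $0$ precisely when no denominator $1-\nu+k = 1-\alpha-\beta\epsilon+k$ vanishes identically, i.e.\ when $\alpha\notin\{1,2,3,\dots\}$ (the hypothesis), so that term is $\mA_{\alpha,\beta}(\epsilon)$ up to sign. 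This gives \eqref{lemeq1} with the indicator absent. For general $\alpha$ (not an integer) one reaches this regime because $\beta\epsilon$ is small; for $\alpha=1$ exactly, the $k=0$ denominator is $\beta\epsilon$, producing the explicit pole $\tfrac{1}{\beta\epsilon}$ shown, with the remaining sum still analytic.

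Next I would handle the case $\alpha \in \{2,3,\dots\}$ and $\beta\ne 0$, where the naive split fails because $1-\nu$ has negative real part. Here the clean route is analytic continuation: both sides of \eqref{lemeq1}, viewed as functions of $\epsilon$ on a small real punctured interval, extend analytically, and the identity $\int_1^\infty e^{-\epsilon^2 t} t^{-\nu}\,\diffd t = \lvert\epsilon\rvert^{2\nu-2}\Gamma(1-\nu,\epsilon^2)$ holds for all relevant $\epsilon$ since the left side converges for $\Re\epsilon^2>0$. One then uses the standard recurrence $\Gamma(s,x) = (s-1)\Gamma(s-1,x) + x^{s-1}e^{-x}$ repeatedly to lower $s = 1-\nu$ into the range $\Re s > 0$ treated above; each application of the recurrence contributes, after multiplication by $\lvert\epsilon\rvert^{2\nu-2}$, a term of the form (polynomial in $\nu$) times $\epsilon^{2(s-1)}\lvert\epsilon\rvert^{2\nu-2}e^{-\epsilon^2}$, which for the integer shifts in question becomes an analytic function of $\epsilon$ (an integer power of $\epsilon^2$ times $e^{-\epsilon^2}$ times a polynomial in $\beta\epsilon$). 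Absorbing these into $\mA_{\alpha,\beta}$, and noting $\Gamma(1-\alpha-\beta\epsilon)$ itself is analytic near $\epsilon=0$ when $\alpha\ge 2$ is an integer and $\beta\ne 0$ (since the argument stays away from the non-positive integers for small nonzero $\epsilon$—this is exactly where $\beta\ne 0$ is used), one recovers \eqref{lemeq1}; again the $\indic{\alpha=1}$ term is absent, consistent with the formula.

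For \eqref{lemeq2} I would simply differentiate \eqref{lemeq1} with respect to $\alpha$ (or, equivalently, note $\log t \cdot t^{-\nu} = -\partial_\nu t^{-\nu}$ and $\partial_\alpha = \partial_\nu$), which is legitimate because the integrals and the series all converge locally uniformly. The derivative of $\lvert\epsilon\rvert^{2\nu-2} = e^{(2\nu-2)\log\lvert\epsilon\rvert}$ with respect to $\nu$ brings down $2\log\lvert\epsilon\rvert$, and the derivative of $\Gamma(1-\nu)$ is $-\Gamma'(1-\nu)$, which together reproduce the bracket $-2\log\lvert\epsilon\rvert\,\Gamma(1-\alpha-\beta\epsilon) + \Gamma'(1-\alpha-\beta\epsilon)$ (with an overall sign to be tracked carefully through $\partial_\alpha = \partial_\nu$). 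Differentiating the explicit $\tfrac{\indic{\alpha=1}}{\beta\epsilon}$ with respect to $\alpha$ is subtle since it involves the indicator; the honest way is to treat the $\alpha = 1$ case directly: when $\alpha=1$ the $k=0$ term of the series is $\tfrac{1}{\beta\epsilon}\lvert\epsilon\rvert^{2\beta\epsilon}$ and applying $-\partial_\alpha$ (acting on $1-\alpha-\beta\epsilon$ in the denominator and on the $\lvert\epsilon\rvert$-power) yields $\tfrac{1}{(\beta\epsilon)^2}$ plus an analytic remainder, matching the stated $\indic{\alpha=1}/(\beta\epsilon)^2$. The analyticity of $\mA_{\alpha,\beta}$ and $\tilde\mA_{\alpha,\beta}$ near $\epsilon=0$ can be packaged via \autoref{lem:a} applied to the manifestly convergent series, or checked termwise.

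The main obstacle is bookkeeping rather than conceptual: one must confirm that in the case $\alpha\in\{2,3,\dots\}$ the recurrence truly leaves behind only functions analytic at $\epsilon=0$ (the potential danger is a spurious $\lvert\epsilon\rvert$-type non-analytic factor surviving, but the integer shifts make $\lvert\epsilon\rvert^{2\nu-2}\epsilon^{2(s-1)}$ an even integer power of $\epsilon$ up to the analytic factor $\lvert\epsilon\rvert^{2\beta\epsilon} = e^{2\beta\epsilon\log\lvert\epsilon\rvert}$—and this last factor is itself analytic in $\epsilon$ near $0$ since $\epsilon\log\lvert\epsilon\rvert \to 0$ smoothly enough; more precisely $e^{2\beta\epsilon\log\lvert\epsilon\rvert}$ must be shown analytic, which follows because its logarithm $2\beta\epsilon\log\lvert\epsilon\rvert$ is, rather, only continuous—so in fact one should keep $\lvert\epsilon\rvert^{2\beta\epsilon}$ bundled with $\Gamma(1-\alpha-\beta\epsilon)$ inside the displayed non-analytic term, not inside $\mA$). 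Tracking exactly which pieces are declared "analytic" versus "explicit non-analytic" is the only delicate point, and the hypothesis ``$\alpha\notin\{1,2,3,\dots\}$ or $\beta\ne 0$'' is precisely what guarantees $\Gamma(1-\alpha-\beta\epsilon)$ and the series denominators never develop a pole for small nonzero real $\epsilon$.
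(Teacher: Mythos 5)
Your route is correct, but it is a genuinely different one from the paper, relying more heavily on named special functions. The paper splits the integral at $t=1$, identifies $\int_0^\infty$ with $\lvert\epsilon\rvert^{2\nu-2}\Gamma(1-\nu)$ by substitution, and proves that $-\int_0^1$ is analytic by a Morera-type argument (\autoref{lem:a}); then it handles $\alpha\ge1$ by repeated integration by parts of the original integral, producing a recurrence relating $\mA_{\alpha,\beta}$ to $\mA_{\alpha-1,\beta}$. You instead recognise the whole thing as $\lvert\epsilon\rvert^{2\nu-2}\Gamma(1-\nu,\epsilon^2)$ and expand the lower incomplete gamma as a power series in $\epsilon^2$, which makes the analyticity of $\mA_{\alpha,\beta}$ visible termwise; your recurrence via $\Gamma(s,x)=(s-1)\Gamma(s-1,x)+x^{s-1}e^{-x}$ is algebraically the same identity as the paper's integration by parts. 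The one truly different step is your treatment of \eqref{lemeq2}: where the paper repeats the integration-by-parts argument with careful pairing of $\Gamma$ and $\Gamma'$ terms, you differentiate \eqref{lemeq1} in $\alpha$ (equivalently in $\nu$), which is slicker and legitimate because the series makes $\mA_{\alpha,\beta}$ manifestly jointly analytic in $(\alpha,\epsilon)$. What each approach buys: the paper stays fully self-contained with elementary analysis, while yours leans on standard incomplete-gamma machinery and gets \eqref{lemeq2} with much less work.

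Two small bookkeeping points. First, after cancellation with the prefactor $\lvert\epsilon\rvert^{2\nu-2}$, the $k$-th term of your series is just $\epsilon^{2k}/[k!(1-\nu+k)]$; there is no surviving $\lvert\epsilon\rvert^{2\beta\epsilon}$ factor in the series. So the claim that the $k=0$ term for $\alpha=1$ is $\tfrac{1}{\beta\epsilon}\lvert\epsilon\rvert^{2\beta\epsilon}$ is a misstatement — it is exactly $-1/(1-\nu)=1/(\beta\epsilon)$. Your (correct) caution that $\lvert\epsilon\rvert^{2\beta\epsilon}$ is not analytic at $\epsilon=0$ applies only to the displayed $\Gamma$-term, and in fact the pole $1/(\beta\epsilon)^2$ you extract from the differentiated $k=0$ term is still the right one: $-\partial_\nu[-1/(1-\nu)]=1/(1-\nu)^2=1/(\beta\epsilon)^2$ at $\alpha=1$. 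Second, for $\alpha>1$ non-integer you invoke the decomposition $\Gamma(s,x)=\Gamma(s)-\gamma(s,x)$ with $\Re s<0$, which requires the analytic continuation of $\gamma$ (the defining integral diverges there); this is fine, since the series form of $\gamma(s,x)$ is entire in $s$ away from the non-positive integers, but the sentence ``for general $\alpha$ (not an integer) one reaches this regime because $\beta\epsilon$ is small'' does not make that clear and should be replaced by an explicit appeal to analytic continuation (or, as you also note, the recurrence covers this case too).
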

\noindent\textbf{Remarks:} the condition $\alpha\not\in\{1,2,3,\ldots\}$ or
$\beta\ne0$ ensures that the gamma functions appearing in the result are
well defined for $\epsilon$ small enough.  For $\alpha=n\in\{1,2,3,\ldots\}$ and $\beta=0$
one would have
 $$\int_1^\infty \diffd t\, e^{-\epsilon^2 t} \frac1{t^{n}}
=\frac{2(-1)^n}{(n-1)!} \epsilon^{2n-2}\log|\epsilon|+\mathcal A_{n,0}(\epsilon),$$
but we don't need this result in the present paper, and we skip the proof.
 For convenience, we give the
results we actually use, writing simply $\mA(\epsilon)$ for
the analytic functions:
\begin{align}
\int_1^\infty \diffd t\, e^{-\epsilon^2 t} \frac1{t^{\frac32+\frac32\epsilon}}
&=|\epsilon|^{1+3\epsilon}\Gamma(-\tfrac12-\tfrac32\epsilon)
+\mA(\epsilon),
\qquad\int_1^\infty \diffd t\, e^{-\epsilon^2 t} \frac1{t^{2+\frac32\epsilon}}
=|\epsilon|^{2+3\epsilon}\Gamma(-1-\tfrac32\epsilon) +\mA(\epsilon),
\notag\\
\int_1^\infty \diffd t\, e^{-\epsilon^2 t} \frac{\log t}{t^{\frac52+\frac32\epsilon}}
&=|\epsilon|^{3+3\epsilon}\Big[-2\log|\epsilon|\Gamma(-\tfrac32-\tfrac32\epsilon)+\Gamma'(-\tfrac32-\tfrac32\epsilon)\Big]
+\mA(\epsilon).
\label{lem1seq}
\end{align}

\begin{proof}[Proof of \autoref{lem:1s}]
Fix $\alpha$ and $\beta$ such that either $\alpha\not\in\{1,2,3,\ldots\}$
or $\beta\ne0$.  We restrict $\epsilon$ to be real, non-zero and
$|\epsilon|$ to be small enough so that
$\alpha+\beta\epsilon\not\in\{1,2,3,\ldots\}$.
This ensures that the $\Gamma$ function and its derivative in
\eqref{lemeq1} and \eqref{lemeq2} are defined,
and we define $A_{\alpha,\beta}(\epsilon)$ and $\tilde
A_{\alpha,\beta}(\epsilon)$ by (respectively) \eqref{lemeq1} and
\eqref{lemeq2}. We now show
that the functions thus defined can be extended into 
analytic functions around $\epsilon=0$ for $\alpha<1$.

We first consider $\alpha<1$. Note that by our restriction on the range of
allowed $\epsilon$, one also has $\alpha+\beta\epsilon<1$, and one can
write
\begin{equation}
\int_1^\infty \diffd t\, 
\frac{ e^{-\epsilon^2 t}}{t^{\alpha+\beta\epsilon}}=
\int_0^\infty \diffd t\,
\frac{ e^{-\epsilon^2 t}}{t^{\alpha+\beta\epsilon}}-
\int_0^1\diffd t\, 
\frac{ e^{-\epsilon^2 t}}{t^{\alpha+\beta\epsilon}}
=\lvert\epsilon\rvert^{2\alpha-2+2\beta\epsilon}\Gamma(1-\alpha-\beta\epsilon)-
\int_0^1\diffd t\, 
\frac{ e^{-\epsilon^2 t}}{t^{\alpha+\beta\epsilon}}
.
\end{equation}
By identification with \eqref{lemeq1}, one obtains
\begin{equation}
A_{\alpha,\beta}(\epsilon)=-\int_0^1\diffd t\, e^{-\epsilon^2 t}
\frac1{t^{\alpha+\beta\epsilon}}\qquad\text{for $\alpha<1$}.
\end{equation}
Similarly,
\begin{equation}
\tilde A_{\alpha,\beta}(\epsilon)=-\int_0^1\diffd t\, e^{-\epsilon^2 t}
\frac{\log t}{t^{\alpha+\beta\epsilon}}\qquad\text{for $\alpha<1$}.
\end{equation}
Let $\tilde\alpha\in(\alpha,1)$, and let $U$ a simply connected
neighbourhood of $0$ in $\C$ such that
$\alpha+\beta\Re(\epsilon)<\tilde{\alpha}$ and $|e^{-\epsilon^2t}|<2$ for
all $\epsilon\in U$ and $t\in[0,1]$. One
can apply Lemma~\ref{lem:a} with the bounding function
$g(t)=2(1+|\log t|)/t^{\tilde\alpha}\indic{t\in(0,1)}$ to show that 
$A_{\alpha,\beta}(\epsilon)$ and $\tilde
A_{\alpha,\beta}(\epsilon)$ are analytic around 0.

To extend the result to $\alpha\ge1$, we integrate by parts the left hand
side of \eqref{lemeq1}
\begin{equation}
\int_1^\infty \diffd t\, e^{-\epsilon^2 t}
\frac1{t^{\alpha+\beta\epsilon}}=
\frac1{1-\alpha-\beta\epsilon}\Big[-e^{-\epsilon^2}+\epsilon^2\int_1^\infty\diffd
t\,e^{-\epsilon^2 t}\frac1{t^{\alpha+\beta\epsilon-1}}\Big].
\end{equation}
Then, rewriting the integrals in terms of the functions $\mA_{\alpha,\beta}$
as in \eqref{lemeq1},
\begin{multline}
\lvert\epsilon\rvert^{2\alpha-2+2\beta\epsilon}\Gamma(1-\alpha-\beta\epsilon)
+\frac{\indic{\alpha=1}}{\beta\epsilon}
+\mA_{\alpha,\beta}(\epsilon)
\\
=\frac1{1-\alpha-\beta\epsilon}\Big[-e^{-\epsilon^2}+
\lvert\epsilon\rvert^{2\alpha-2+2\beta\epsilon}\Gamma(2-\alpha-\beta\epsilon)
+\frac{\indic{\alpha=2}}{\beta}\epsilon
+\epsilon^2\mA_{\alpha-1,\beta}(\epsilon)\Big].
\end{multline}
With the property $x\Gamma(x)=\Gamma(x+1)$, the terms with the $\Gamma$
functions cancel and one is left with
\begin{equation}
\mA_{\alpha,\beta}(\epsilon)
=
-\frac{\indic{\alpha=1}}{\beta\epsilon}
+
\frac1{1-\alpha-\beta\epsilon}\Big[-e^{-\epsilon^2}+
\frac{\indic{\alpha=2}}{\beta}\epsilon
+\epsilon^2\mA_{\alpha-1,\beta}(\epsilon)\Big].
\end{equation}
For convenience let us also write the special case $\alpha=1$:
\begin{equation}
\mA_{1,\beta}(\epsilon)
=\frac{e^{-\epsilon^2}-1}{\beta\epsilon}
-\frac\epsilon\beta \mA_{0,\beta}(\epsilon).
\label{lema1}
\end{equation}
It is then clear from these equations that, except for
$(\alpha=1,\beta=0)$ or $(\alpha=2,\beta=0)$,
 one has
\begin{equation}\label{emimplies}
\{ \epsilon\mapsto \mA_{\alpha-1,\beta}(\epsilon)\text{ analytic around $0$}\} \implies
\{ \epsilon\mapsto \mA_{\alpha,\beta}(\epsilon)\text{ analytic around $0$}\}.
\end{equation}
As $\mA_{\alpha,\beta}$ is analytic around 0 for $\alpha<1$,
this implies by induction that $\mA_{\alpha,\beta}$ is analytic around
0 for all $\alpha$ if $\beta\ne0$, and for all
$\alpha\not\in\{1,2,3,\ldots\}$ if $\beta=0$.

We proceed in the same way for $\tilde \mA_{\alpha,\beta}$. Integrating by
parts the integral in \eqref{lemeq2},
\begin{equation}
\int_1^\infty \diffd t\, e^{-\epsilon^2 t}
\frac{\log t}{t^{\alpha+\beta\epsilon}}=
\frac1{1-\alpha-\beta\epsilon}\Big[\epsilon^2\int_1^\infty\diffd
t\,e^{-\epsilon^2 t}\frac{\log t}{t^{\alpha+\beta\epsilon-1}}-
\int_1^\infty \diffd t\, e^{-\epsilon^2 t}
\frac1{t^{\alpha+\beta\epsilon}}\Big].
\end{equation}
We replace all the integrals using \eqref{lemeq1} and \eqref{lemeq2} and
notice, using $x\Gamma(x)=\Gamma(x+1)$ and
$\Gamma(x)+x\Gamma'(x)=\Gamma'(x+1)$, that all the terms involving $\Gamma$
functions cancel, \textit{i.e.}\@:
\begin{multline}
\Big[-2\log\lvert\epsilon\rvert
\Gamma(1-\alpha-\beta\epsilon)+\Gamma'(1-\alpha-\beta\epsilon)\Big]
\\=
\frac1{1-\alpha-\beta\epsilon}\Big[-2\log\lvert\epsilon\rvert
\Gamma(2-\alpha-\beta\epsilon)+\Gamma'(2-\alpha-\beta\epsilon)
-\Gamma(1-\alpha-\beta\epsilon)\Big].
\end{multline}
Then, the remaining terms are
\begin{equation}
\frac{\indic{\alpha=1}}{(\beta\epsilon)^2}+\tilde\mA_{\alpha,\beta}(\epsilon)=\frac1{1-\alpha-\beta\epsilon}\Big[\frac{\indic{\alpha=2}}{\beta^2}+\epsilon^2\tilde
\mA_{\alpha-1,\beta}(\epsilon)-\frac{\indic{\alpha=1}}{\beta\epsilon}-\mA_{\alpha,\beta}(\epsilon)\Big]
\end{equation}
In particular, for $\alpha=1$,
\begin{equation}
\tilde
\mA_{1,\beta}(\epsilon)=-\frac\epsilon{\beta}\tilde
\mA_{0,\beta}(\epsilon)+\frac{\mA_{1,\beta}(\epsilon)}{\beta\epsilon}.
\end{equation}
Notice from \eqref{lema1} that $\mA_{1,\beta}(0)=0$. Hence we have again,
except if $\alpha\in\{1,2,3,\ldots\}$ and $\beta=0$ 
\begin{equation}
\{ \epsilon\mapsto \tilde \mA_{\alpha-1,\beta}(\epsilon)\text{ analytic around $0$}\} \implies
\{ \epsilon\mapsto \tilde \mA_{\alpha,\beta}(\epsilon)\text{ analytic
around $0$}\},
\end{equation}
and the proof is finished in the same way as for $\mA_{\alpha,\beta}$.
\end{proof}

\autoref{lem:1s} gives asymptotic expansions of $e^{-\epsilon^2 t}$ times
exact power laws of $t$. The next lemma deals with the case of approximate
power laws.

\begin{lem}\label{lem:O}
Let $f(\epsilon,t)$ be a family of functions such that, for
a certain neighbourhood $U$ of 0 in $\C$,
\begin{itemize}
\item $\epsilon\mapsto f(\epsilon,t)$ is analytic in $U$ for all $t>0$,
\item $\displaystyle|f(\epsilon,t)| \le
C\min\left(1,\frac1{t^{\alpha+\beta\epsilon}}\right)$ for all $t>0$ and
$\epsilon\in U\cap\R$, where $C>0$, $\alpha$ and $\beta$ are some real constants.
\end{itemize}
Then there exists a polynomial $\mP$ such that, for $\epsilon$ real,
non-zero, and $|\epsilon|$ small enough, 
\begin{equation}
\int_0^\infty\diffd t\, e^{-\epsilon^2t}f(\epsilon,t)
= \mP(\epsilon)+\begin{cases}
\mathcal O\big(|\epsilon|^{2\alpha-2}\big)&\text{if
$\alpha\not\in\{1,2,3,\ldots\}$},\\[1ex]
\mathcal O\big(|\epsilon|^{2\alpha-2}\log|\epsilon|\big)&\text{if
$\alpha\in\{1,2,3,\ldots\}$},
\end{cases}
\qquad\text{as $\epsilon\to0$}.
\end{equation}
\end{lem}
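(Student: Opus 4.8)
The plan is to prove the lemma by induction on $\lceil\alpha\rceil$. All the analyticity statements below are instances of \autoref{lem:a}: to apply it one needs the integrands to be dominated, uniformly for $\epsilon$ in a complex disc, by an integrable function of the integration variable, which uses that $\epsilon\mapsto f(\epsilon,t)$ is locally bounded with a control of the form $|f(\epsilon,t)|\le C\min(1,t^{-\alpha-\Re(\beta\epsilon)})$ on a complex neighbourhood of $0$ — this is automatic for the Laplace-transform-type $f$ that occurs in the applications.

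Base case $\alpha\le1$: take $\mathcal P=0$. One estimates $\bigl|\int_0^\infty e^{-\epsilon^2t}f(\epsilon,t)\,\diffd t\bigr|\le C\int_0^1 e^{-\epsilon^2t}\,\diffd t+C\int_1^\infty e^{-\epsilon^2t}t^{-\alpha-\beta\epsilon}\,\diffd t$. The first term is bounded; the second is $\le\int_0^\infty e^{-\epsilon^2t}t^{-\alpha-\beta\epsilon}\,\diffd t=|\epsilon|^{2\alpha-2+2\beta\epsilon}\Gamma(1-\alpha-\beta\epsilon)=\mO(|\epsilon|^{2\alpha-2})$ when $\alpha<1$ (and for $\alpha<1$ this dominates the additive constant since $2\alpha-2<0$), while for $\alpha=1$ it equals $\mO(\log|\epsilon|)=\mO(|\epsilon|^{2\alpha-2}\log|\epsilon|)$ by \autoref{lem:1s} (and the remark after it if $\beta=0$). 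This is the asserted estimate in both cases.

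Inductive step: let $\alpha>1$ and assume the lemma for the exponent $\alpha-1$ (for every $\beta$); note $\lceil\alpha-1\rceil=\lceil\alpha\rceil-1$. Split $\int_0^\infty=\int_0^1+\int_1^\infty$. By \autoref{lem:a} the piece $\int_0^1 e^{-\epsilon^2t}f(\epsilon,t)\,\diffd t$ is analytic near $\epsilon=0$, hence a polynomial plus $\mO(\epsilon^M)$ for any $M$; since $\alpha>1$, the integral $\int_1^\infty f(\epsilon,t)\,\diffd t$ converges and is analytic near $0$, so it too is a polynomial plus $\mO(\epsilon^M)$. There remains $-\int_1^\infty\bigl(1-e^{-\epsilon^2t}\bigr)f(\epsilon,t)\,\diffd t$. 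Writing $1-e^{-\epsilon^2t}=\epsilon^2\int_0^t e^{-\epsilon^2s}\,\diffd s$ and applying Tonelli (valid for each fixed $\epsilon\ne0$), this equals $-\epsilon^2\int_0^\infty e^{-\epsilon^2s}H(\epsilon,s)\,\diffd s$ with $H(\epsilon,s):=\int_{\max(1,s)}^\infty f(\epsilon,t)\,\diffd t$. The function $H$ is analytic in $\epsilon$ near $0$ and satisfies $|H(\epsilon,s)|\le C'\min\bigl(1,s^{-(\alpha-1)-\beta\epsilon}\bigr)$, so it obeys the hypotheses of the lemma with exponent $\alpha-1$; the induction hypothesis gives $\int_0^\infty e^{-\epsilon^2s}H(\epsilon,s)\,\diffd s=\mathcal Q(\epsilon)+\mO(|\epsilon|^{2\alpha-4})$, with an additional $\log|\epsilon|$ precisely when $\alpha-1\in\{1,2,\dots\}$. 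Multiplying by $\epsilon^2$ produces $\epsilon^2\mathcal Q(\epsilon)$ plus $\mO(|\epsilon|^{2\alpha-2})$ — or $\mO(|\epsilon|^{2\alpha-2}\log|\epsilon|)$ exactly when $\alpha$ is a positive integer. Collecting all polynomial contributions into $\mathcal P$ and choosing $M>2\alpha-2$ in the analytic pieces closes the induction.

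I expect the main difficulties to be, first, the bookkeeping: checking at each stage that every "analytic" term contributes only to the polynomial plus a remainder negligible against $|\epsilon|^{2\alpha-2}$, and that the recursion $\alpha\mapsto\alpha-1$ never degrades this exponent — the logarithm being produced exactly when this chain reaches the base value $1$, i.e. when $\alpha\in\{1,2,3,\dots\}$, which reproduces the dichotomy in the statement; and second, the repeated appeal to \autoref{lem:a}, which is the one place where local boundedness of $f$ in the complex variable $\epsilon$, uniform in $t$, is actually used.
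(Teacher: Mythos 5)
Your proof is correct and is essentially the same as the paper's: an identical base case $\alpha\le1$ via \autoref{lem:1s}, and an inductive step that reduces $\alpha$ to $\alpha-1$ by passing to the primitive of $f$ and applying the induction hypothesis. The only cosmetic difference is that the paper performs the reduction by a single integration by parts on $(0,\infty)$ (defining $F(\epsilon,t)=\int_t^\infty f$ and writing $\int_0^\infty e^{-\epsilon^2t}f=F(\epsilon,0)-\epsilon^2\int_0^\infty e^{-\epsilon^2t}F$), whereas you first split off the $[0,1]$ piece and the constant $\int_1^\infty f$ and then reach the same object by writing $1-e^{-\epsilon^2t}=\epsilon^2\int_0^te^{-\epsilon^2s}\,\diffd s$ and applying Tonelli; your $H(\epsilon,s)=F(\epsilon,\max(1,s))$ differs from the paper's $F(\epsilon,s)$ only on $s\in[0,1]$ by an analytic term, so the two decompositions are interchangeable. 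Your explicit remark that the appeal to \autoref{lem:a} requires the domination of $f$ to hold on a complex neighbourhood of $0$ (not just on $U\cap\R$) is a fair point; the paper uses this implicitly, and it is indeed satisfied by the $\varphi$, $\hat\varphi$ and $R$ to which the lemma is applied.
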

\begin{proof}
For the case $\alpha\le 1$, the polynomial $\mP(\epsilon)$ plays no role as
it is asymptotically smaller than the $\mO$ term. Thus, we simply need to
bound the integral for $\epsilon\in U\cap\R$:
\begin{equation}
\bigg|\int_0^\infty\diffd t\, e^{-\epsilon^2t}f(\epsilon,t)
\bigg|\le C +C\int_1^\infty\diffd t\,
e^{-\epsilon^2t}\frac1{t^{\alpha+\beta\epsilon}}.
\end{equation}
The remaining integral is given by \eqref{lemeq1} except for the case
$\alpha=1$ and $\beta=0$:
\begin{equation}
\int_1^\infty\diffd t\,
e^{-\epsilon^2t}\frac1{t^{\alpha+\beta\epsilon}}=
\begin{cases}
\lvert\epsilon\rvert^{2\alpha-2+2\beta\epsilon}\Gamma(1-\alpha-\beta\epsilon)
+\mA_{\alpha,\beta}(\epsilon)
=\mathcal O(|\epsilon|^{2\alpha-2})&\text{if $\alpha<1$},\\[1ex]
\lvert\epsilon\rvert^{2\beta\epsilon}\Gamma(-\beta\epsilon)
+\frac{1}{\beta\epsilon}
+\mA_{\alpha,\beta}(\epsilon)
=\mathcal O(\log|\epsilon|)&\text{if $\alpha=1$},
\end{cases}
\end{equation}
where we used $|\epsilon|^{2\beta\epsilon}=1+\mO(\epsilon\log|\epsilon|)$ and
$\Gamma(-\beta\epsilon)=\frac{\Gamma(1-\beta\epsilon)}{-\beta\epsilon}=-\frac{1}{\beta\epsilon}+\mO(1)$.
One checks independently that the case $\alpha=1$ and $\beta=0$ gives also
$\mathcal O(\log|\epsilon|)$.

For $\alpha>1$, we proceed by induction. Pick $\tilde\alpha\in(1,\alpha)$,
and make the neighbourhood $U$ of 0 small enough that
$\alpha+\beta\epsilon>\tilde\alpha$ for $\epsilon\in U\cap \R$. Then
\begin{equation}
|f(\epsilon,t)|\le
C\min\left(1,\frac1{t^{\alpha+\beta\epsilon}}\right)
\le
C\min\left(1,\frac1{t^{\tilde\alpha}}\right)\qquad\text{for all $t>0$ and
$\epsilon\in U\cap \R$}.
\end{equation}
Integrating by parts,
\begin{equation}
\int_0^\infty\diffd t\, e^{-\epsilon^2t}f(\epsilon,t)
=F(\epsilon,0)-\epsilon^2\int_0^\infty\diffd
t\,e^{-\epsilon^2t}F(\epsilon,t)\qquad\text{with }F(\epsilon,t)=\int_{t}^\infty\diffd
t'\,f(\epsilon,t').
\label{68}
\end{equation}
Using \autoref{lem:a}, the function $\epsilon\mapsto F(\epsilon,t)$ is
analytic for all $t\ge0$ and $\epsilon\in U$. Furthermore, for some~$\tilde C$,
\begin{equation}
|F(\epsilon,t)|\le \tilde C \min
\left(1,\frac1{t^{\alpha-1+\beta\epsilon}}\right)\qquad\text{for all $t>0$ and
$\epsilon\in U\cap \R$}.
\end{equation}
Then, assuming that the lemma holds for $\alpha-1$, we can apply it to 
the integral with $F$ in \eqref{68}; after Taylor-expanding
the analytic function $F(\epsilon,0)$, we see that the result holds for $f$.
\end{proof}

\section{Expansions in \texorpdfstring{$\epsilon$}{ε}}\label{sec:expansion}

We now finish the proof of \autoref{prop2}. It remains to show \eqref{42}:
\begin{equation}
\int_0^\infty\diffd t \,\big[\varphi(\epsilon,t)-\hat\varphi(\epsilon)\big]e^{-\epsilon^2
t +(1+\epsilon)(\mu_t-2t)}=\mP(\epsilon)+\mO(\epsilon^3),
\label{42bis}
\end{equation}
for some polynomial $\mathcal P(\epsilon)$,
under the hypotheses that $\gamma>1$ and there exists $C>0$, $t_0\ge0$
and a (real) neighbourhood $U$ of 0 such that
\begin{equation}
\big|\varphi(\epsilon,t)-\hat\varphi(\epsilon)\big|\le\frac C t
\qquad\text{for $t>t_0$ and $\epsilon\in U$}.
\label{67}
\end{equation}

Recall from \eqref{BramsonsConditions} that $F(h)\le
C h^{1+p}$ for some $p>0$ and some constant $C$.
Recall the definitions \eqref{defphi} of $\varphi$ and $\hat\varphi$:
\begin{equation}
\varphi(\epsilon,t)   :=\int\diffd z\, F[h(\mu_t+z,t)]e^{(1+\epsilon)z},\qquad
\hat\varphi(\epsilon) :=\int\diffd z\, F[\omega(z)]   e^{(1+\epsilon)z}.
\end{equation}
For each $t>0$, these functions of $\epsilon$ are analytic in the region
$V=\{\epsilon\in \C\,;\,-1<\Re\epsilon <p\}$.
Indeed,
\autoref{lem1} with $r=1$ 
and \eqref{BramsonsConditions}
give that $0\le F[h(\mu_t+z,t)]\le C_{t}
e^{-(1+p)z}$ for some function of time $C_{t}$. Using that bound for $z>0$
and the bound $0\le F[h(\mu_t+z,t)]\le1$ for $z<0$, we get from
\autoref{lem:a} that $\varphi(\epsilon)$ is analytic in the domain
$\{\epsilon\in\C\,;\,-1+a<\Re\epsilon<p-a\}$ for any $a>0$, and is
therefore
analytic on the domain $V$ defined above. The same argument
works in the same way for~$\hat\varphi(\epsilon)$.

Then, as $\gamma>1$, Bramson's result implies that $\mu_t = 2t-\tfrac32\log t +a +o(1)$. With \eqref{67}, we see that there is a constant $C'>0$ such that
\begin{equation}
\big|\varphi(\epsilon,t)-\hat\varphi(\epsilon)\big|e^{
(1+\epsilon)(\mu_t-2t)}
\le \frac {C'}
{t^{\frac52+\frac32\epsilon}}
\qquad\text{for $t>t_0$ and $\epsilon\in U$}.
\label{77}
\end{equation}
Pick $\beta\in(1,\gamma)$, and make the neighbourhood $U$ smaller if needed so that
$\epsilon\in U \implies 0.5 <1+ \epsilon<\beta$.
Then, recalling the definition \eqref{defg} of $g$, 
since $F(h)<h$, and since $e^{(1+\epsilon)z}\le e^{0.5z}+e^{\beta z}$ for
all $z$,
\begin{equation}
\varphi(\epsilon,t)\le g(0.5,t)+g(\beta,t)\qquad\text{for $t>0$ and $\epsilon\in U$},
\end{equation}
which remains bounded for $t\in[0,t_0]$ according to \autoref{lem1}.
Similarly, $\hat\phi(\epsilon)$ is bounded for $\epsilon\in\ U$ and we see
that
the left hand side of \eqref{77} is uniformly bounded by some constant for
$t\le t_0$ and $\epsilon\in U$.
Then, \eqref{42bis} is a direct application of \autoref{lem:O}.
Combined with \eqref{prop21}, this implies that \eqref{prop22} holds:
\begin{equation}\label{prop22bis}
\hat\varphi(\epsilon)\int_0^\infty\diffd t \,e^{-\epsilon^2
t +(1+\epsilon)(\mu_t-2t)}=
\int\diffd x \,h_0(x)e^{(1+\epsilon) x}
-\indic{\epsilon>0}\Phi(2+2\epsilon)+\mP(\epsilon)+\mO(\epsilon^3).
\end{equation}
This concludes the proof of \autoref{prop2}.

We now turn to the proof of \autoref{mainthm}. The first statement (for the
$F(h)=h^2$ case with $h_0$ a compact perturbation of the step function) is
a consequence, with \autoref{prop1}, of the second statement (for a general
$F(h)$). We assume
that the hypotheses of that second statement hold; they imply in particular that the second form
of the magical relation holds, \eqref{prop22} or \eqref{prop22bis}. Furthermore, the hypothesis that $\int
\diffd x\,h_0(x)e^{rx}<\infty$ for some $r>1$ (\textit{i.e.}\@
$\gamma>1$) implies (see \autoref{lem:a}) that $\int \diffd x\,h_0(x)
e^{(1+\epsilon)x}$ is an analytic function of $\epsilon$ around 0,
 so that it can be absorbed into the
$\mP(\epsilon)+\mO(\epsilon^3)$ term. Finally, we write \eqref{prop22bis}
as
\begin{equation}
\label{prop22ter}
\hat\varphi(\epsilon)I(\epsilon)=
-\indic{\epsilon>0}\Phi(2+2\epsilon)+\mP(\epsilon)+\mO(\epsilon^3)
\qquad\text{with }I(\epsilon)=\int_1^\infty\diffd t \,e^{-\epsilon^2
t +(1+\epsilon)(\mu_t-2t)}.
\end{equation}
Notice that we defined $I(\epsilon)$ as an integral from 1 to $\infty$, not
0 to $\infty$. We are allowed to do this because the remaining 
integral from 0 to~1  is an analytic function of $\epsilon$ around~0;
 multiplied by $\hat\varphi(\epsilon)$ (another analytic function), it can be
absorbed into $\mP(\epsilon)+\mO(\epsilon^3)$ term.

Using the lemmas proved in \autoref{sec:lem}, we now compute a small
$\epsilon$ expansion of $I(\epsilon)$. We have assumed that the position $\mu_t$ has
a large $t$ expansion given by
\eqref{position}; actually, let us simply write
\begin{equation}\label{position2}
\mu_t=2t-\frac32\log t +a +\frac{b}{\sqrt t}+c
\frac{\log t}t+r(t)\qquad\text{with }r(t)=\mathcal O\Big(\frac1t\Big),
\end{equation}
and we will recover the values of $b$ and $c$ as given in \eqref{position}.
We have
\begin{equation}
\label{prop22terx}
\begin{aligned}
e^{(1+\epsilon)(\mu_t-2t)}
&=\frac{e^{(1+\epsilon)a}}{t^{\frac32+\frac32\epsilon}}e^{\frac{b(1+ \epsilon)}{\sqrt t}+\frac{c(1+      \epsilon)\log t}{t}+(1+\epsilon) r(t)}
\\
&=e^{(1+\epsilon)a}\Big[\frac1{t^{\frac32+\frac32\epsilon}}+\frac{b(1+\epsilon)}{t^{2
+\frac{3\epsilon}2}}
        +\frac{c(1+\epsilon)\log
t}{t^{\frac52+\frac32\epsilon}}
\Big]
+R(t,\epsilon)
\end{aligned}
\end{equation}
With 
\begin{equation}
R(t,\epsilon)=\frac{e^{(1+\epsilon)a}}{t^{\frac32+\frac32\epsilon}}\Big[e^{\frac{b(1+ \epsilon)}{\sqrt t}+\frac{c(1+ \epsilon)\log
t}{t}+(1+\epsilon) r(t)} - \Big(1+ {\frac{b(1+ \epsilon)}{\sqrt t}+\frac{c(1+ \epsilon)\log
t}{t}}\Big)\Big].
\end{equation}
For $|u|<1$, we have the bound $|e^{u}-(1+u-v)|\le |e^{u}-(1+u)|+|v|\le
u^2+|v|$. Applying this to
$u=\frac{b(1+ \epsilon)}{\sqrt t}+\frac{c(1+ \epsilon)\log
t}{t}+(1+\epsilon)r(t)$ and $v=(1+\epsilon) r(t)$, we see easily that there exists a 
$C>0$ and a real neighbourhood $U$ of 0 such that
\begin{equation}
\label{prop22tery}
|R(t,\epsilon)|\le\frac C {t^{\frac52+\frac32\epsilon}}\qquad\text{for all $t>1$ and all $\epsilon\in
U$.}
\end{equation}
As $\epsilon\mapsto R(t,\epsilon)$ is analytic around 0 for all $t>0$, a direct application of \autoref{lem:1s}, and in particular of
\eqref{lem1seq}, and of \autoref{lem:O} gives from \eqref{prop22ter},
\eqref{prop22terx} and \eqref{prop22tery}:
\begin{equation}
\begin{aligned}
I(\epsilon)&=
e^{(1+\epsilon)a}
\Big[|\epsilon|^{1+3\epsilon}\Gamma(-\tfrac12-\tfrac32\epsilon) 
+b(1+\epsilon)
|\epsilon|^{2+3\epsilon}\Gamma(-1-\tfrac32\epsilon)
\\&\qquad
+c(1+\epsilon)|\epsilon|^{3+3\epsilon}\Big(
-2\log|\epsilon|\,\Gamma(-\tfrac32-\tfrac32\epsilon)
+\Gamma'(-\tfrac32-\tfrac32\epsilon)
\Big)
\Big]
+\mathcal P(\epsilon)+\mathcal O(\epsilon^3).
\end{aligned}
\end{equation}
The three analytic functions from \autoref{lem:1s} have been absorbed
into the $\mathcal P(\epsilon)+\mathcal O(\epsilon)$ term of
\autoref{lem:O}. We expand all the Gamma functions; the expansion of
the second one is irregular:
\begin{equation}
\Gamma(-1-\tfrac32\epsilon)=
\frac{\Gamma(-\tfrac32\epsilon)}{-1-\tfrac32\epsilon}
=\frac{\Gamma(1-\tfrac32\epsilon)}{-\tfrac32\epsilon(-1-\tfrac32\epsilon)}
=
\frac2{3\epsilon}\frac{1+\gamma_E\tfrac32\epsilon+\mO(\epsilon^2)}{1+\tfrac32\epsilon}
=
\frac{2}{3 \epsilon }+\gamma_E-1
+\mathcal O(\epsilon),
\end{equation}
where $\gamma_E=-\Gamma'(1)\simeq0.577$ is Euler's gamma constant.
We obtain

\begin{align}
\label{expI}
I(\epsilon)&=
e^{(1+\epsilon)a}|\epsilon|^{3\epsilon}\Big[\Gamma(-\tfrac12)|\epsilon|-\tfrac32\Gamma'(-\tfrac12)\epsilon|\epsilon|+b\big(\tfrac23\epsilon+(\gamma_E-\tfrac13)\epsilon^2\big)-2c\Gamma(-\tfrac32)|\epsilon|^3\log|\epsilon|\Big]+\mO(\epsilon^3)
\\&=e^{(1+\epsilon)a}|\epsilon|^{3\epsilon}\Big[
 \Big(\Gamma(-\tfrac12)|\epsilon|
+\tfrac23b \epsilon\Big)
+
\Big( b(\gamma_E-\tfrac13)\epsilon^2
-\tfrac32 \Gamma'(-\tfrac12)\epsilon |\epsilon|\Big)
-2c\Gamma(-\tfrac32)|\epsilon|^3\log|\epsilon|
\Big]+\mO(\epsilon^3).\notag
\end{align}
Notice how the expansion mixes terms such as $\epsilon$ and $|\epsilon|$.
For reference, we recall that
\begin{equation}\label{Gamma}
\Gamma(-\tfrac12)=-2\sqrt\pi,\qquad
\Gamma'(-\tfrac12)=-2\sqrt\pi(2-\gamma_E-2\log2),\qquad
\Gamma(-\tfrac32)=\tfrac43\sqrt\pi.
\end{equation}

Before going further, we show how to recover the values of $b$ and $c$.
Notice in \eqref{prop22ter} that, for $\epsilon<0$ (and since
$\hat\varphi(\epsilon)$ is analytic around 0), we must have $I(\epsilon)=\mathcal
P(\epsilon)+\mO(\epsilon)$. In particular, there must remain no
$\log|\epsilon|$ term in the expansion~\eqref{expI} for $\epsilon<0$. 
There is a $\log|\epsilon|$ term explicitly written in \eqref{expI}, and
others in the expansion of the prefactor
$|\epsilon|^{3\epsilon}=1+3\epsilon\log|\epsilon|+\tfrac92\epsilon^2\log^2|\epsilon|+\cdots$.
Developing, we obtain a term $ \big(\Gamma(-\tfrac12)|\epsilon|
+\tfrac23b \epsilon\big)3\epsilon\log|\epsilon|$; that term must
cancel for $\epsilon<0$, hence, with \eqref{Gamma}
\begin{equation}
b=\frac32\Gamma(-\tfrac12)= -3\sqrt\pi.
\label{valb}
\end{equation}
Then, $c$ must be chosen in order to prevent a term
$\epsilon^3\log|\epsilon|$ from appearing when $\epsilon<0$.
This
leads to
\begin{equation}
3\Big(b(\gamma_E-\tfrac13)+\tfrac32\Gamma'(-\tfrac12)\Big)+2c\Gamma(-\tfrac32)=0
\end{equation}
With \eqref{Gamma} and \eqref{valb}, this leads to
\begin{equation}\label{valc}
c=\tfrac98(5-6\log2).
\end{equation}
Using the values \eqref{valb} and \eqref{valc} of $b$ and $c$ in
\eqref{position2} gives back the expression \eqref{position} of the
position $\mu_t$ of the front.
Let us make two remarks:
\begin{itemize}
\item
If we try to add in \eqref{position2} extra terms of the
form $C (\log t)^n / t^\alpha$, we would obtain non-cancellable
singularities (terms containing $\log|\epsilon|$ or non integral powers of
$|\epsilon|$) in the expansion of $I(\epsilon)$. We conclude that if $\mu_t$
can be written as an expansion in terms of the form  $C (\log t)^n
/ t^\alpha$, then the only terms that may appear are those written in
\eqref{position2}.
\item We used the hypothesis 
 that $\int
\diffd x\,h_0(x)e^{rx}<\infty$ for some $r>1$ (\textit{i.e.}\@
$\gamma>1$), only once, to get rid of the $\int\diffd x\,
h_0(x)e^{(1+\epsilon)x}$ term in \eqref{prop22bis}. If we relax this
hypothesis and simply assume $\int\diffd x\,
h_0(x) xe^{x}<\infty$ (this is needed to reach \eqref{prop22bis}), we would
have at this point that, for $\epsilon<0$,
$\hat\varphi(\epsilon)I(\epsilon)=\int\diffd x\,
h_0(x)e^{(1+\epsilon)x}+\mP(\epsilon)+\mO(\epsilon^3)$.
We have just shown that, if the position $\mu_t$ of the front is given by
\eqref{position}, then $I(\epsilon)=\mP(\epsilon)+\mO(\epsilon^3)$ for
$\epsilon<0$. We thus see that 
\begin{equation}
\mu_t\text{ given by \eqref{position}}\implies
\int\diffd x\,
h_0(x)e^{(1+\epsilon)x}=\mP(\epsilon)+\mO(\epsilon^3)\text{ for
$\epsilon<0$}\iff \int\diffd x\, h_0(x) x^3e^{x}<\infty.
\end{equation}
(We omit the proof of the last equivalence.) 
Conversely, if $\int\diffd x\, h_0(x) x^3e^{x}=\infty$, then the
asymptotic expansion for small negative $\epsilon$ of $\int\diffd x\,
h_0(x)e^{(1+\epsilon)x}$ will feature some singular terms larger than
$\epsilon^3$, and the expression of $\mu_t$ needs to be modified in such
a way that $\hat\varphi(\epsilon)I(\epsilon)$ matches those singular terms.
\end{itemize}

We return to the expression \eqref{expI} of $I(\epsilon)$ without
making any assumption on the sign of
$\epsilon$, and we make the substitution
\begin{equation}
|\epsilon|=-\epsilon+2\epsilon\indic{\epsilon>0},\qquad
|\epsilon|^3=-\epsilon^3+2\epsilon^3\indic{\epsilon>0}.
\end{equation}
We have tuned $b$ and $c$ so that one obtains
$I(\epsilon)=\mP(\epsilon)+\mO(\epsilon)$ for $\epsilon<0$. For $\epsilon$
of either sign,
we have three extra terms multiplied by $\indic{\epsilon>0}$,
corresponding to the three terms with $|\epsilon|$ or $|\epsilon|^3$ in
\eqref{expI}:
\begin{align}
I(\epsilon)&=
\indic{\epsilon>0}e^{(1+\epsilon)a}\epsilon^{3\epsilon}\Big[
2\Gamma(-\tfrac12)\epsilon
-3 \Gamma'(-\tfrac12)\epsilon^2
-4c\Gamma(-\tfrac32)\epsilon^3\log\epsilon
\Big]
+\mP(\epsilon)+\mO(\epsilon^3).
\end{align}
Comparing with \eqref{prop22ter}, we see that
we must have (only for $\epsilon>0$, of course):
\begin{equation}
\Phi(2+2\epsilon)=\hat\varphi(\epsilon)e^{(1+\epsilon)a}\epsilon^{3\epsilon}\Big[
-2\Gamma(-\tfrac12)\epsilon
+3 \Gamma'(-\tfrac12)\epsilon^2
+4c\Gamma(-\tfrac32)\epsilon^3\log\epsilon
\Big]+\mO(\epsilon^3).
\label{Phi1}
\end{equation}
This expression will be, after some transformations, our main result \eqref{Phi}. We now make a small
$\epsilon$ expansion of $\hat\varphi(\epsilon)$.
From the definition~\eqref{defphi} of $\hat\varphi(\epsilon)$ and
the equation \eqref{propomega} followed by $\omega$, one has
\begin{equation}
\hat\varphi(\epsilon)=\int\diffd z\,F[\omega(z)]e^{(1+\epsilon)z}=
\int\diffd z\,\big[\omega''(z)+2\omega'(z)+\omega(z)\big]e^{(1+\epsilon)z}
\end{equation}
This function $\hat\phi(\epsilon)$ is analytic around $\epsilon=0$, but we need to assume
$-1<\epsilon<0$ to split the integral into
three terms and integrate by parts. (Recall that $\omega(z)\sim\tilde\alpha z e^{-z}$ as
$z\to\infty$.)
\begin{equation}
\begin{aligned}
\hat\varphi(\epsilon)
&=
 \int\diffd z\,\omega''(z)e^{(1+\epsilon)z}+
2\int\diffd z\,\omega'(z) e^{(1+\epsilon)z}+
 \int\diffd z\,\omega(z)e^{(1+\epsilon)z},
\\&=
\big[(1+\epsilon)^2-2(1+\epsilon)+1\big]
\int\diffd z\,\omega(z) e^{(1+\epsilon)z}=
\epsilon^2\int\diffd z\,\omega(z) e^{(1+\epsilon)z},
\\&= \epsilon^2 e^{-(1+\epsilon)\alpha} \int\diffd z\,\omega(z-a)
e^{(1+\epsilon)z}\qquad\text{for $-1<\epsilon<0$.}
\end{aligned}
\end{equation}
Recall
\eqref{omegaalphabeta}: for any $q\in(0,p)$,
\begin{equation}
\omega(z-a)= (\alpha z + \beta ) e^{-z}+\mathcal O(e^{-(1+q)z})\quad\text{as
$z\to\infty$},
\end{equation}
Then $\int\diffd z\,\omega(z-\alpha)e^{(1+\epsilon)z} = \alpha/\epsilon^2-\beta/\epsilon+\mO(1)$ and
\begin{equation}
\hat\varphi(\epsilon)e^{(1+\epsilon)a}= \alpha-\beta\epsilon+\mO(\epsilon^2).
\end{equation}
Even though the intermediate steps are only valid for $\epsilon<0$, the
final result is also valid for $\epsilon>0$ (small enough) by analyticity.
In \eqref{Phi1}, after replacing $c$ and the Gamma functions by their
values \eqref{Gamma} and \eqref{valc}, we obtain
\begin{equation}
\begin{aligned}
\Phi(2+2\epsilon)
&=(\alpha-\beta\epsilon)\epsilon^{3\epsilon}\Big[
4\sqrt\pi\epsilon
-6\sqrt\pi(2-\gamma_E-2\log2)\epsilon^2
+6(5-6\log2)\sqrt\pi\epsilon^3\log\epsilon
\Big]+\mO(\epsilon^3)
\\
&=\sqrt\pi(\alpha-\beta\epsilon)\epsilon^{3\epsilon}\Big[
4\epsilon
-6(2-\gamma_E-2\log2)\epsilon^2
+6(5-6\log2)\epsilon^3\log\epsilon
\Big]+\mO(\epsilon^3)
\end{aligned}
\end{equation}
It remains to develop with the term
$\epsilon^{3\epsilon}=1+3\epsilon\log\epsilon+\frac92\epsilon^2\log^2\epsilon+\cdots$;
only the coefficient of $\epsilon^3\log\epsilon$ requires to combine two
terms: $3\times (-6)(2-\gamma_E-2\log2)+6(5-6\log 2)=6(3\gamma_E-1)$. We obtain.
\begin{equation}
\Phi(2+2\epsilon)=\sqrt\pi(\alpha-\beta\epsilon)
\Big[
4\epsilon+12\epsilon^2\log\epsilon
-6(2-\gamma_E-2\log2)\epsilon^2
+18\epsilon^3\log^2\epsilon
+6(3\gamma_E-1)\epsilon^3\log\epsilon
\Big]+\mO(\epsilon^3)
\end{equation}

The last step is to replace $\epsilon$ by $\epsilon/2$
\begin{equation}
\begin{aligned}
\Phi(2+\epsilon)&=\sqrt\pi\Big(\alpha-\frac\beta2\epsilon\Big)
\Big[ 2\epsilon+3\epsilon^2\log\frac\epsilon2
-3\Big(1-\frac{\gamma_E}2-\log2\Big)\epsilon^2
+\frac94\epsilon^3\log^2\frac\epsilon2
\\&\qquad\qquad\qquad\qquad\qquad\qquad
+\frac34(3\gamma_E-1)\epsilon^3\log\frac\epsilon2
\Big]+\mO(\epsilon^3)
\\&=\sqrt\pi\Big(\alpha-\frac\beta2\epsilon\Big)
\Big[ 2\epsilon+3\epsilon^2\log\epsilon
-3\Big(1-\frac{\gamma_E}2\Big)\epsilon^2
+\frac94\epsilon^3\log^2\epsilon
\\&\qquad\qquad\qquad\qquad\qquad\qquad
+\frac34(3\gamma_E-6\log2-1)\epsilon^3\log\epsilon
\Big]+\mO(\epsilon^3),
\end{aligned}
\end{equation}
which is \eqref{mainresult}. This completes the proof of the
second part of \autoref{mainthm}. It now remains to prove \autoref{prop1t}
to obtain the first part of \autoref{mainthm}.

\section{Proof of \autoref{prop1t}}\label{proofprop1t}

We start by recalling the main results of \cite{Graham.2019}:
\begin{thm}[Cole Graham 2019 \cite{Graham.2019}]\label{thmCole}
Let $h(x,t)$ be the solution to the Fisher-KPP equation~\eqref{FKPP} with
$F(h)=h^2$ and with
initial condition $h_0(x)$.
Assume that $0\le h_0\le 1$ and that $h_0$ is a compact perturbation of the
step function. There exist $\alpha_0$ and $\alpha_1$ in $\mathbb R$
depending on the initial data $h_0$ such that the following holds. For any
$\gamma>0$, there exists $C_\gamma>0$ also depending on $h_0$ such that for
all $x\in\R$ and all $t\ge3$
\begin{equation}
\big|h(\sigma_t + x,t) -U_\text{app}(x,t)\big|\le \frac{C_\gamma (1+|x|)
e^{-x}}{t^{\frac32-\gamma}},
\label{Cole1}
\end{equation}
where 
\begin{equation}
\sigma_t = 2t -\frac32\log t+\alpha_0 -\frac{3\sqrt\pi}{\sqrt t}
+\frac98(5-6\log 2)\frac{\log t} t +\frac{\alpha_1}t
\end{equation}
and
\begin{equation}
U_\text{app}(x,t)=\phi(x)+\frac1t \psi(x)+\mathcal
O(t^{\gamma-3/2})\text{\quad locally uniformly in $x$}.
\end{equation}
Here, $\phi(x)$ is the critical travelling wave translated in such a way
that \cite[eqn.~(1.2)]{Graham.2019}
\begin{equation}
\phi(x)=A_0 x e^{-x} + \mathcal O(e^{-(1+q)x})\quad\text{as $x\to\infty$},
\label{phiasymp}
\end{equation}
and 
$\psi(x)$ satisfies \cite[Lem.~5 with
$\psi(x)=A_0e^{-x}V_1^-(x)$ as written in the Proof of Thm~3 p.~1985]{Graham.2019}
\begin{equation}
\psi(x)\text{ is bounded},\qquad \psi(x)\sim -\frac{A_0}4 x^3
e^{-x}\quad\text{as $x\to\infty$}.
\label{psiinfty}
\end{equation}

Furthermore, there exist smooth
functions $V^+_1$, $V^+_2$ and $V^+_3$
of $x/\sqrt t$ such that, for $t$ large enough and $\gamma\in(0,3)$,
\begin{equation}
\begin{cases}\displaystyle \Big|U_\text{app}(x,t)-\phi(x)-\frac1t\psi(x)\Big| \le
C_\gamma\frac{\min(1,e^{-x})}{t^{\frac32-\frac23\gamma}} & \text{for $x\le t^{\gamma/6}$},
\\[2ex]\displaystyle \Big| U_\text{app}(x,t) - A_0 e^{-x}\Big(xe^{-x^2/(4t)}+
V^+_1(x/\sqrt t) \\
\qquad\qquad\qquad\qquad+\frac{\log t}{\sqrt t}V^+_2(x/\sqrt
t) +\frac 1 {\sqrt t} V^+_3(x/\sqrt t)\Big)\Big|\le C_\gamma
\dfrac{e^{-x}}{t^{\frac32-\frac12\gamma}}
& \text{for $x>t^{\gamma/6}$}.
\end{cases}
\label{notincole}
\end{equation}
The $V^+_i$ satisfy $V^+_i(0)=V^+_i(\infty)=0$, and so there are
bounded.
\end{thm}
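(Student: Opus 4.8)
The plan is to prove \autoref{thmCole} by a matched asymptotic expansion in the frame moving with the ansatz position $\sigma_t$, closed by a stability estimate against the resulting approximate solution $U_\text{app}$. First I would pass to the moving frame: set $\xi=x-\sigma_t$ and $H(\xi,t):=h(\sigma_t+\xi,t)$, so that $\partial_tH=\partial_\xi^2H+\dot\sigma_t\,\partial_\xi H+H-H^2$ with $\dot\sigma_t=2-\tfrac3{2t}+\tfrac{3\sqrt\pi}{2}t^{-3/2}+\mathcal{O}(t^{-2}\log t)$. Let $\phi$ be the travelling wave solving the ODE in \eqref{propomega} in the translate normalised by \eqref{phiasymp}, so $\phi(\xi)=A_0\xi e^{-\xi}+\mathcal{O}(e^{-(1+q)\xi})$, and write $H=\phi+g$. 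Since $F'(\phi)=2\phi$ for $F(h)=h^2$, the remainder obeys $\partial_tg=\mathcal{L}g+(\dot\sigma_t-2)\phi'+(\dot\sigma_t-2)\partial_\xi g-g^2$ with $\mathcal{L}:=\partial_\xi^2+2\partial_\xi+1-2\phi$, the linearisation of the KPP operator about $\phi$; here $\mathcal{L}\phi'=0$, and after conjugating by $e^\xi$ (so that $\mathcal{L}$ becomes $\partial_\xi^2-2\phi$) one checks that $\phi'$ spans the solutions of $\mathcal{L}u=0$ bounded on all of $\mathbb{R}$.

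Next I would carry out the inner expansion, for $\xi=\mathcal{O}(1)$. The leading balance is $\partial_tg\simeq\mathcal{L}g-\tfrac3{2t}\phi'$, which forces $g\simeq\tfrac1t\psi$ with $\psi$ the bounded solution of $\mathcal{L}\psi=\tfrac32\phi'$ (unique modulo $\phi'$, the remaining freedom being absorbed into $\alpha_0$). Solving this linear ODE at $+\infty$, where $\mathcal{L}\to(\partial_\xi+1)^2$ and $\phi'\sim-A_0\xi e^{-\xi}$, gives $\psi(\xi)\sim-\tfrac{A_0}4\xi^3e^{-\xi}$, and boundedness at $-\infty$ follows from the characteristic roots of $\partial_\xi^2+2\partial_\xi-1$; this is \eqref{psiinfty}. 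This expansion is valid only for $\xi\ll\sqrt t$: once $\xi^2\sim t$ the term $\tfrac1t\psi$ is no longer a correction to $\phi$, which is why a genuinely different description appears for $x>t^{\gamma/6}$ in \eqref{notincole}.

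Then I would treat the outer region, $\xi\sim\sqrt t$. There $H$ is super-exponentially small, so $H^2$ is negligible and $H$ solves the linearised equation to all relevant orders; factoring out the expected decay by $H=A_0e^{-\xi}Q$ turns $\partial_\xi^2+2\partial_\xi+1$ into $\partial_\xi^2$ and leaves $\partial_tQ=\partial_\xi^2Q+(\dot\sigma_t-2)(\partial_\xi Q-Q)$, a heat-type equation in which the $t$-dependent corrections to $\dot\sigma_t$ enter as explicit lower-order forcing. In the self-similar variable $\eta=\xi/\sqrt t$, $Q$ is of the form $\eta\sqrt t\,e^{-\eta^2/4}+V^+_1(\eta)+\tfrac{\log t}{\sqrt t}V^+_2(\eta)+\tfrac1{\sqrt t}V^+_3(\eta)+\cdots$, the leading term being the self-similar ``derivative Gaussian'' $\xi e^{-\xi^2/(4t)}$; collecting powers of $t^{-1/2}$ and $t^{-1/2}\log t$ gives linear ODEs in $\eta$ for the $V^+_i$. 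At each order there is a solvability condition — the profile must stay bounded and vanish at $\eta=0$ and $\eta=\infty$ — and it is exactly these conditions that fix the coefficient $-3\sqrt\pi$ of $t^{-1/2}$ and $\tfrac98(5-6\log2)$ of $t^{-1}\log t$ in $\sigma_t$ (the same computation, in a different guise, as the one producing \eqref{position}); $\alpha_0$ and $\alpha_1$ remain free and are determined by $h_0$ through Bramson's convergence \eqref{Bramson's result} and a next-order mass integral. Matching inner and outer in the overlap $1\ll\xi\ll\sqrt t$, i.e.\ identifying $\tfrac1t\psi(\xi)\sim-\tfrac{A_0}{4t}\xi^3e^{-\xi}$ with the small-$\eta$ behaviour of the outer profile, assembles $U_\text{app}$ with the two-range description \eqref{notincole}.

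Finally, with $U_\text{app}$ in hand and a bound $C_\gamma(1+|\xi|)e^{-\xi}t^{-5/2+\gamma}$ on its residual, I would close the argument by a stability estimate for $w:=H-U_\text{app}$: from $\partial_tw=\mathcal{L}w+\text{(residual)}+(\dot\sigma_t-2)\partial_\xi w-(2U_\text{app}+w)w$, combine the decay estimates of the semigroup $e^{t\mathcal{L}}$ in the weight $e^{-\xi}$ (where $\mathcal{L}=\partial_\xi^2-2\phi$ is a decaying perturbation of the heat operator) with a Duhamel iteration and maximum-principle comparison against explicit super-solutions, promoting an a priori bound to $\|w(t)\|\le C_\gamma t^{-3/2+\gamma}$ in the weighted norm, which is \eqref{Cole1}. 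The hard part will be this step together with the semigroup estimates themselves: at the critical speed $c=2$ the translation mode $\phi'$ sits at the edge of the essential spectrum of $\mathcal{L}$ (a zero resonance), so decay is only polynomial and the sharp rate $t^{-3/2}$ must be extracted from the tight interplay between the bounded front region and the Gaussian diffusive region — precisely where the analysis of \cite{HamelNolenRoquejoffreRyzhik.2013,NolenRoquejoffreRyzhik.2019} must be pushed one order further, as carried out in \cite{Graham.2019}.
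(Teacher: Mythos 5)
Your proposal and the paper part ways at the outset: you set out to \emph{prove} Graham's theorem by a matched asymptotic expansion (inner region $\xi=\mathcal O(1)$ solving $\mathcal L\psi=\frac32\phi'$, outer diffusive region in the variable $\eta=x/\sqrt t$ after factoring out $A_0e^{-x}$, solvability conditions fixing the $-3\sqrt\pi/\sqrt t$ and $\frac98(5-6\log2)(\log t)/t$ terms of $\sigma_t$, then a stability estimate), whereas the paper makes no attempt to reprove the theorem at all. Its ``proof'' consists of citing \cite{Graham.2019} for \eqref{Cole1} and then doing archival work: locating, page by page in Graham's arguments, the decomposition $U_\text{app}=A_0e^{-x}V_\text{app}$ with $V_\text{app}$ as in \eqref{V+app}, the inner profiles $V_0^-,V_1^-$ and the shift $\zeta_t=\mathcal O(t^{2\gamma/3-3/2})$, the gluing term $K(t)\theta(xt^{-\gamma/6})\varphi$ with $K(t)=\mathcal O(t^{\gamma/2-3/2})$, and the outer profiles $V_i^+$ --- precisely the ingredients needed to justify \eqref{notincole}, which is not stated explicitly in \cite{Graham.2019}. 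Your heuristic skeleton is sound and consistent with Graham's construction (your computation $\psi(\xi)\sim-\frac{A_0}4\xi^3e^{-\xi}$ from $u''=-\frac32A_0\xi$ after conjugation, and the identification of the leading outer profile $\sqrt t\,\eta e^{-\eta^2/4}=xe^{-x^2/(4t)}$, both match \eqref{psiinfty} and \eqref{notincole}).

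The genuine gap is twofold. First, the entire quantitative content of the theorem lives in the step you explicitly defer: the semigroup estimates for $\mathcal L$ at the edge of its essential spectrum and the Duhamel/comparison closure that upgrades the formal expansion to the uniform weighted bound \eqref{Cole1} with rate $t^{-3/2+\gamma}$. Acknowledging that this is ``carried out in \cite{Graham.2019}'' reduces your argument to a citation with extra steps, which is fine as far as it goes but is not a proof. Second, and more concretely, your matching argument in the overlap $1\ll\xi\ll\sqrt t$ gives no mechanism for producing the \emph{specific} error exponents $t^{-3/2+2\gamma/3}$ and $t^{-3/2+\gamma/2}$ in \eqref{notincole}, nor the crossover location $x=t^{\gamma/6}$. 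These are not outputs of a generic inner--outer matching; they come from the particular way Graham glues the two regions (the cutoff at $x=t^\epsilon$ with $\epsilon=\gamma/6$, the size of $K(t)$, and the translation $\zeta_t$ of the inner profile), and without tracking those objects one cannot state \eqref{notincole} in the form the rest of the paper needs --- in particular \autoref{corrCole} and the proof of \eqref{technical} rely on exactly these two regimes and their exponents.
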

\noindent\textbf{Remarks}\begin{itemize}
\item We introduced in \eqref{propomega} the critical travelling wave $\omega(x)$,
fixing the translational invariance by imposing $\omega(0)=\frac12$. The
functions $\omega$ and $\phi$ are related by
$\omega(x)=\phi\big(\phi^{-1}(\frac12)+x\big)$.
\item \eqref{notincole} is not explicitly written in \cite{Graham.2019},
but it can be pieced together from the proofs:
in the Proof of Theorem~3 p.\,1985, one reads
$U_\text{app}(x,t)=A_0e^{-x}V_\text{app}(x,t)$ and in the proof of
Theorem~9, p.\,1986, one reads
\begin{equation}
V_\text{app}(x,t)=\indic{x<t^\epsilon} V^-(x,t)+\indic{x\ge t^\epsilon}
V^+(x,t)+ K(t) \theta(x t^{-\epsilon}) \varphi(x,t).
\label{V+app}
\end{equation}
At the end of proof (p.\,1995), the author takes $\epsilon=\gamma/6$;
the functions $\varphi$ and $\theta$ are bounded, $K(t)=\mathcal
O(t^{3\epsilon-3/2})$, and $\theta$ is supported on $(0,2)$, see p.\,1986.
Then, we have so far, for some $C$,
\begin{equation}
\begin{cases}\displaystyle
\big|U_\text{app}(x,t)-A_0e^{-x}V^-(x,t)\big| \le
C\frac{e^{-x}\indic{x>0} }{t^{\frac32-\frac12\gamma}} & \text{for $x\le t^{\gamma/6}$},
\\[2ex]\displaystyle \big| U_\text{app}(x,t) - A_0 e^{-x}V^+(x,t)\big|\le C
\frac{e^{-x}}{t^{\frac32-\frac12\gamma}}
& \text{for $x>t^{\gamma/6}$}.
\end{cases}
\label{132}
\end{equation}
($C$ is some positive constant independent of $x$ and $t$ which can change
at each occurrence.)

We start with the first line;
the function $V^-$ is given at the top of p.\,1986:
\begin{equation}
V^-(x,t)=V_0^-(x+\zeta_t)+\frac1t V_1^-(x+\zeta_t),
\end{equation}
with
\begin{equation}
V_0^-(x)=A_0^{-1}e^x\phi(x),\qquad
V_1^-(x)=A_0^{-1}e^x\psi(x),\qquad \zeta(t)=\mathcal O(t^{4\epsilon-\frac32})
=\mathcal O(t^{\frac23\gamma-\frac32}).
\label{134}
\end{equation}
(See respectively p.\,1972, proof of Theorem~3 p.\,1985, and bottom of p.\,1985.)

From p.\,1972 and Lemma~5 p.\,1973, we have 
 $(V_0^-)'(x)\sim 1$ and $(V_1^-)'(x)\sim
-\frac34x^2$ as $x\to\infty$, and $(V_0^-)'(x)=\mathcal O(e^x)$ and
$(V_1^-)'(x)=\mathcal O(e^x)$ as $x\to-\infty$. Thus, 
$|(V_0^-)'(x)|$ and $|(V_1^-)'(x)/t|$ are both bounded by $C\min(e^x,1)$ for all
$t>1$ and all $x<\sqrt t$.
This implies that
\begin{equation}
\Big|V^-(x,t) - V_0^-(x)-\frac1t V_1^-(x)\Big|\le
C \min(e^x,1)
\zeta_t\le C\frac{\min(e^{x},1)}{ t^{\frac32-\frac23\gamma}}
\quad\text{for $t>1$ and $x<\sqrt t$}.
\end{equation}
Multiplying by $A_0e^{-x}$ and using \eqref{134},
\begin{equation}
\Big|A_0e^{-x}V^-(x,t) - \phi(x)-\frac1t \psi(x)\Big|\le
C\frac{\min(1,e^{-x})}{ t^{\frac32-\frac23\gamma}}
\quad\text{for $t>1$ and $x<\sqrt t$}.
\label{85}
\end{equation}
Combining with the first line of \eqref{132} under the assumption $\gamma<3$, we obtain the first line of
\eqref{notincole}, as the bounding term in \eqref{132} is small compared to
the bounding term in \eqref{85}.

We now turn to the second line of \eqref{132}.
The function $V^+$, only defined for $x>0$, is given
in~(3.4) p.\,1973 in terms of $\tau=\log t$ and
$\eta=x/\sqrt t$:
\begin{equation}
V^+(x,t)=e^{\tau/2}V_0^+(\eta)+V_1^+(\eta)+\tau
e^{-\tau/2}V_2^+(\eta)+e^{-\tau/2}V_3^+(\eta),
\label{137}
\end{equation}
with
\begin{equation}
V_0^+(\eta)=\eta e^{-\eta^2/4},\qquad\text{and so}\qquad
e^{\tau/2}V_0^+(\eta)=x e^{-x^2/(4t)}.
\label{138}
\end{equation}
(Top of p.\,1974: $V_0^+(\eta)=q_0\phi_0(\eta)$ for some
real $q_0$; middle of p.\,1974: $q_0=1$; bottom of p.\,1973:
$\phi_0(\eta)=\eta e^{-\eta^2/4}$.)
Using \eqref{137} and \eqref{138} in the second line of \eqref{132}
gives the second line of
\eqref{notincole}.
The $V_i^+$ are smooth (they are solutions on some differential equations
written pp.\,1974,~1975), and satisfy $V_i^+(0)=V_i^+(\infty)=0$, see
line after (3.4) p.\,1973. 
\end{itemize}
We wrote \eqref{notincole} with the accuracy provided by the proofs of
\cite{Graham.2019}, but we actually need a less precise version, only up to
order $1/t$:
\begin{corr}\label{corrCole}
With the notations and hypotheses of \autoref{thmCole},
for any $\gamma\in(0,1/2]$, if $t$ is large enough,
\begin{equation}
\begin{cases}
\displaystyle
\displaystyle\big|h(\sigma_t+x,t)-\phi(x)\big|\le C_\gamma\frac{(1+|x|^3) e^{-x}}t
 &\text{for $x\le t^{\gamma/6}$},
\\[2ex]
\big|h(\sigma_t+x,t)-\phi(x)\big|\le C_\gamma x e^{-x}
&\text{for $x> t^{\gamma/6}$}.
\end{cases}
\label{secondrange}
\end{equation}
\end{corr}
\begin{proof}
Recall from~\eqref{psiinfty} that $\psi$ is bounded and $\psi(x)
\sim C x^3 e^{-x}$ as $x\to\infty$. This implies that
$|\psi(x)|\le
C \min\big(1,(1+|x|^3)e^{-x}\big)$ for some constant $C$. Then, the first line
of \eqref{notincole} implies that
\begin{equation}
\big|U_\text{app}(x,t)-\phi(x)\big| \le
C \frac{\min\big(1,(1+|x|^3)e^{-x}\big)}t \qquad\text{for $x\le
t^{\gamma/6}$}
\label{simple<}
\end{equation}
for some other constant $C$.
With \eqref{Cole1}, this implies the first line of \eqref{secondrange}.
(Recall $\gamma\le\frac12$.)

In the second line of \eqref{notincole}, the quantities $V_i^+$ are
bounded. As $x>t^{\gamma/6}\ge1$, we have
\begin{equation}
\big|U_\text{app}(x,t)\big|\le C x e^{-x}
\qquad\text{for $x> t^{\gamma/6}$}.
\end{equation}
As we also have $\phi(x)\sim A_0 x e^{-x}$, we obtain
$
\big|U_\text{app}(x,t)-\phi(x)\big|\le Cx e^{-x}
\ \text{for $x> t^{\gamma/6}$}.
$
which gives, with \eqref{Cole1}, the second line of
\eqref{secondrange}.
\end{proof}
Unfortunately, \autoref{thmCole} and, consequently,
\autoref{corrCole}
are very imprecise for $x<0$. We will need the following result to
complement \autoref{corrCole}:
\begin{lem}\label{lem4}
With the notations and hypotheses of Theorem 2, there exists $C$
and $t_0$
depending on the initial condition $h_0$ such that, for $t\ge t_0$, 
\begin{equation}
\big|h(\sigma_t+x,t)-\phi(x)\big|\le \frac{C}t
 \qquad\text{for $x\le 0$}.
\label{firstrange}
\end{equation}
\end{lem}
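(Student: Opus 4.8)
The plan is to leverage the fact that for $x\le 0$ the value $h(\sigma_t+x,t)$ is bounded away from $0$ and $1$ in a way that makes the convergence $h(\sigma_t+x,t)\to\phi(x)$ quantitative, by comparing $h$ to suitable sub- and super-solutions built from the travelling wave and the already-known estimate \eqref{Cole1} at a reference point. The key observation is that $\mu_t-\sigma_t=\mathcal O(1/t)$ (both equal $2t-\tfrac32\log t+\text{const}+o(1)$ with matching $1/\sqrt t$ and $(\log t)/t$ terms, by \eqref{position} and the definition of $\sigma_t$), so it suffices to prove the bound with $\sigma_t$ replaced by $\mu_t$; since $\phi$ is Lipschitz and bounded on $(-\infty,0]$, the error from shifting by $\mathcal O(1/t)$ is itself $\mathcal O(1/t)$. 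Thus I reduce to showing $|h(\mu_t+x,t)-\phi(x)|\le C/t$ for $x\le 0$, i.e. controlling $\delta(x,t):=h(\mu_t+x,t)-\omega(x)$ (up to the fixed shift relating $\phi$ and $\omega$) uniformly on the left half-line.

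First I would record that \autoref{thmCole}, applied at a single well-chosen point such as $x=0$ (or $x=1$), already gives $|h(\sigma_t,t)-\phi(0)|\le C_\gamma t^{-3/2+\gamma}$, hence $|h(\mu_t,t)-\tfrac12|\le C/t$, and similarly control of $\partial_x h$ at that point via \autoref{lem2}. Next I would use these one-point estimates as boundary data: on the region $\{x\le 0\}$ the function $h(\mu_t+x,t)$ satisfies the Fisher-KPP equation in the moving frame, $\partial_t\delta=\partial_x^2\delta+\dot\mu_t\partial_x\delta+\delta-F(\omega+\delta)+F(\omega)+(\dot\mu_t-2)\omega'$, and since $h$ is close to $1$ there, the reaction term $\delta\mapsto -F(\omega+\delta)+F(\omega)$ is, by $F\in C^1$ and $F'(1)=1$ (more precisely $F'$ strictly positive near $1$), linearly contracting: $-[F(\omega+\delta)-F(\omega)]\delta \le -c_0\delta^2$ for $x$ in a left neighbourhood where $\omega$ is close to $1$. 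Combined with $\dot\mu_t-2=\mathcal O(1/t)$ and $\omega'$ bounded, a maximum-principle / comparison argument on the slab $(-\infty,0]\times[t_0,\infty)$ with the constant-in-$x$ barrier $\pm(A/t)$ (adjusted near the boundary $x=0$ using the one-point estimate and near $x\to-\infty$ using $\delta\to 0$) closes the estimate: the barrier $B(t)=A/t$ satisfies $\dot B \le$ (reaction-dissipation acting on $B$) once $A$ is large, because $\dot B=-A/t^2$ dominates the $\mathcal O(1/t^2)$ forcing from $(\dot\mu_t-2)\omega'$ and the linear part only helps.

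The main obstacle will be making the barrier argument rigorous near the boundary $x=0$, where $\omega$ is not close to $1$ and the linearised reaction coefficient $1-F'(\omega)$ need not have a favourable sign, so the naive constant barrier is not a super-solution there. The fix I anticipate is to localise: on a fixed compact interval $[-L,0]$ one uses instead the quantitative convergence already supplied by \autoref{thmCole}/\autoref{corrCole} at finitely many points together with interior parabolic regularity (or simply apply \eqref{Cole1} directly — it is valid for all $x\in\R$ and gives $C_\gamma(1+|x|)e^{-x}t^{-3/2+\gamma}$, which on a compact set is $\mathcal O(t^{-3/2+\gamma})\le\mathcal O(1/t)$ for $\gamma\le 1/2$), and one only runs the comparison argument on $(-\infty,-L]$ where, choosing $L$ large enough, $\omega$ is as close to $1$ as needed so that $1-F'(\omega)\le -c_0<0$. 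Gluing the compact-interval estimate (from \eqref{Cole1}) and the half-line estimate (from the barrier) yields \eqref{firstrange}. A secondary technical point is verifying $\dot\mu_t-2=\mathcal O(1/t)$ with the needed uniformity; this follows from differentiating \eqref{position} — or, to avoid differentiating an asymptotic expansion, from a standard argument bounding $\dot\mu_t$ via the implicit function theorem applied to $h(\mu_t,t)=1/2$ together with the gradient bound of \autoref{lem2} and the known behaviour of $\partial_t h$ near the front.
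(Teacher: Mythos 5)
Your proposal follows essentially the same route as the paper: a comparison/barrier argument on the left half-line, with a constant-in-$x$ barrier of size $C/t$, using a one-point (or compact-set) estimate from Cole Graham's theorem as boundary data and the contracting reaction term where $\phi$ is close to~$1$. The paper works directly with $\sigma_t$ and $\phi$ (rather than reducing to $\mu_t$ and $\omega$ first, which is harmless but unnecessary), chooses the cut-off at $x_0=\phi^{-1}(\frac12+\alpha)$ rather than at an abstract $-L$, and builds an explicit super-solution $\hat\delta(x,t)=C/t$.

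However, two points in your justification of the barrier step are wrong, and one of them would derail a literal execution of the plan.
First, you write that the forcing $(\dot\mu_t-2)\omega'$ is $\mathcal O(1/t^2)$ and that $\dot B=-A/t^2$ dominates it. In fact $\dot\mu_t-2\sim-\tfrac3{2t}$ (and likewise $\dot\sigma_t-2$), while $\omega'$ is bounded but does not vanish for $x$ on the left, so the forcing is only $\mathcal O(1/t)$ — one order worse than your claim, and the same order as the barrier itself. The mechanism that saves the argument is the \emph{reaction}: the contracting term contributes $\le -c_0 A/t$, which for $A$ large dominates the $\mathcal O(1/t)$ forcing; the $\dot B=-A/t^2$ term is then negligible. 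This is exactly how the paper arranges its bound $0\le\phi'(\dot\sigma_t-2)\le\alpha C/t-C/t^2$, where the $\alpha C/t$ piece is absorbed by the reaction.
Second, you assert $F'(1)=1$ and say that ``$F'$ strictly positive near $1$'' gives the contraction. For $F(h)=h^2$ one has $F'(1)=2$, and more to the point, positivity of $F'$ is \emph{not} sufficient: the linearised reaction coefficient is $1-F'(\xi)$, so you need $F'>1$ near $1$ for the reaction to be contracting. (In the paper's $h^2$ case this is $2\phi>1$, i.e.\@ $\phi>\tfrac12$, which is precisely the threshold fixing $x_0$.) With only $F'>0$, the coefficient $1-F'$ could be positive, and your constant barrier would fail to be a super-solution.
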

\begin{proof}
Choose $\alpha\in(0,\frac12)$ and  let $x_0=\phi^{-1}(\frac12+\alpha)$.
It suffices to prove $|h(\sigma_t+x,t)-\phi(x)|\le C/t$ for $x\le x_0$:
if $x_0\ge 0$, then \eqref{firstrange} follows; if $x_0<0$, then
\eqref{secondrange} provides the required  bound for $x\in[x_0,0]$.

Let 
\begin{equation}
\delta(x,t)=h(\sigma_t +x ,t)-\phi(x).
\end{equation}
By substitution, one obtains
\begin{equation}\begin{aligned}
\partial_t \delta
=\frac{\diffd}{\diffd t} h(\sigma_t+x,t)
&=\partial_x^2(\phi+\delta)+\dot\sigma_t \partial_x (\phi+\delta)+
(\phi+\delta)-(\phi+\delta)^2,
\\&=\phi''+\dot\sigma_t \phi' +\phi-\phi^2+
\partial_x^2\delta+\dot\sigma_t\partial_x \delta
+\delta-2\delta\phi-\delta^2,
\\&=(\dot\sigma_t-2)\phi'+\partial_x^2\delta+\dot\sigma_t\partial_x
\delta+(1-2\phi -\delta)\delta,
\end{aligned}
\label{eqnr}
\end{equation}
where we used in the last step that $\phi''+2\phi'+\phi-\phi^2=0$.

As $\delta(x,t)$ converges uniformly to 0 \cite{Bramson.1983}, there is
a time $t_0>0$
such that $|\delta(x,t)|\le \alpha$ for all $x$ and all $t\ge t_0$.
Recall that
$\phi(x_0)=\frac12+\alpha$ and $\phi\searrow$. Then
\begin{equation}
1-2\phi(x)+|\delta(x,t)|\le1-2\phi(x_0)+\alpha=-\alpha\qquad\text{for
$x\le x_0$ and $t\ge t_0$}.
\label{c1}
\end{equation}
From respectively \eqref{secondrange} and $|\delta(x,t_0)|\le\alpha$, one can find
$C>0$ such that
\begin{equation}
|\delta(x_0,t)| \le \frac C t   \quad\text{for $t\ge t_0$},\qquad
|\delta(x,t_0)|\le \frac C {t_0}\quad\text{for $x\le x_0$}.
\label{c2}
\end{equation}
As $\phi'<0$ is bounded and
$0<2-\dot\sigma_t \sim \frac3{2t}$ for $t$ large enough, one can
increase $t_0$ and $C$ such that, furthermore,
\begin{equation}
\label{c3}
0\le \phi'(x)(\dot\sigma_t-2) \le \alpha\frac C t -\frac
C {t^2}\quad\text{for $t\ge t_0$ and $x\le x_0$}.
\end{equation}
(The reason for the negligible $C/t^2$ term will soon become apparent.)
Let $\hat \delta$ be the solution to
\begin{equation}
\partial_t \hat \delta =\alpha\frac C t-\frac C{t^2}+\partial_x^2\hat
\delta
+\dot\sigma_t \partial_x \hat \delta-\alpha \delta\quad\text{for $x<x_0$,
$t>t_0$},\qquad \hat \delta(x_0,t)=\frac
C t, \qquad \hat \delta (x,t_0)=\frac C {t_0}.
\label{rhat}
\end{equation}
We consider \eqref{eqnr} for $x<x_0$ and $t>t_0$, taking as
``initial'' condition $\delta(x,t_0)$ and as boundary condition
$\delta(x_0,t)$. 
Using the comparison principle between $\delta$ and $\hat \delta$, and then between
$-\delta$ and $\hat \delta$, one obtains with \eqref{c1},
\eqref{c2} and \eqref{c3} that
$|\delta(x,t)|\le \hat \delta(x,t)$ for all
$x\le x_0$ and $t\ge t_0$.
But the solution to \eqref{rhat} is $\hat \delta(x,t)=\frac C t$, hence $|\delta(x,t)|\le\frac C t$
for $t\ge t_0$ and $x\le x_0$. 
\end{proof}

We can now prove \autoref{prop1t}.

\begin{proof}[Proof of \autoref{prop1t}]
The fact that \eqref{position} holds is already proved in
\cite[Corr.~4]{Graham.2019}, as an easy corollary of \autoref{thmCole},
which states:
\begin{equation}
\mu_t = \sigma_t + \phi^{-1}(\tfrac12)+\mathcal O(\tfrac1t),
\label{musigma}
\end{equation}
so that $a$ in \eqref{position} is given by
$a=\alpha_0+\phi^{-1}(\tfrac12)$.
It remains to prove that \eqref{technical} with $F(h)=h^2$ holds:
\begin{equation}
\left |
\int \diffd x \, e^{rx} h(\mu_t+x,t)^2
-
\int \diffd x \, e^{rx} \omega(x)^2
\right| \le \frac C t
\qquad \text{for $t>t_0$ and $r\in U$},
\label{goal1}
\end{equation}
where $C>0$ and $t_0>0$ are some constants, and where $U$ is some real
neighbourhood of $U$. We choose to take
 $U=[0,01,1.99]$. 

In \eqref{goal1}, make the change of variable $x\to x+\sigma_t-\mu_t$ in
the first integral, and the change $x\to x- \phi^{-1}(\frac12)$ in the
second. Recalling that $\omega(x- \phi^{-1}(\frac12))=\phi(x)$ and
factorizing by $e^{r(\sigma_t-\mu_t)}$, we obtain that \eqref{goal1} is equivalent to
\begin{equation}
e^{r(\sigma_t-\mu_t)}\left |
\int \diffd x \, e^{rx} h(\sigma_t+x,t)^2
-
e^{r(\mu_t-\sigma_t-\phi^{-1}(\frac12))}
\int \diffd x \, e^{rx} \phi(x)^2
\right| \le \frac C t
\quad \text{for $t>t_0$ and $r\in U$}.
\end{equation}
The prefactor $e^{r(\sigma_t-\mu_t)}$ is bounded for $r\in U$ and $t>1$,
and can be dropped. As $\int \diffd x\, e^{rx}\phi(x)^2$ is bounded for
$r\in U$, and since \eqref{musigma} holds, one has for some $C$ and $t_0$:
\begin{equation}
\left|
\int \diffd x \, e^{rx} \phi(x)^2
-
e^{r(\mu_t-\sigma_t-\phi^{-1}(\frac12))} 
\int \diffd x \, e^{rx} \phi(x)^2
\right| \le \frac C t
\quad \text{for $t>t_0$ and $r\in U$},
\end{equation}
and then \eqref{goal1} is equivalent to
\begin{equation}
\left |
\int \diffd x \, e^{rx} h(\sigma_t+x,t)^2
-
\int \diffd x \, e^{rx} \phi(x)^2
\right| \le \frac C t
\qquad \text{for $t>t_0$ and $r\in U$}.
\label{goal2}
\end{equation}
We now show that \eqref{goal2} holds.

First notice that there exists $C>0$ such that, for all $x$ and all
$t$ large enough,
\begin{equation}
h(\sigma_t+x,t)\le C\phi(x).
\label{hphi}
\end{equation}
Indeed, 
from  \eqref{secondrange}, $|h(\sigma_t+x,t)-\phi(x)|\le 2C_\gamma xe^{-x}$ for
$x\ge1$ and $t$ large enough (we used $x^2\le t$ in the first line, since
$\gamma\le1/2$). Since $\phi(x)\sim A_0xe^{-x}$ for large $x$, this implies
that $h(\sigma_t+x,t)\le C \phi(x)$ for some $C$ is $x\ge1$ and $t$ large
enough. Making $C$ larger if needed so that $C\phi(1)\ge1$ ensures that the
relation also holds for $x\le1$ since $\phi\searrow$ and $h\le1$.

Then, for another constant $C$, for all $t$ large enough and all $r\in U$,
\begin{equation}\begin{aligned}
\left |
\int\diffd x\, e^{rx} \Big[ h(\sigma+x,t)^2-\phi(x)^2\Big]
\right |,
&\le
\int\diffd x\, e^{rx} \Big|
h(\sigma+x,t)-\phi(x)\Big|\times\Big(h(\sigma+x,t)+\phi(x)\Big)
  \\&
\le 
C \int\diffd x\, e^{rx} \phi(x) \Big| h(\sigma+x,t)-\phi(x)\Big|.
\end{aligned} \end{equation}
We cut the integral in three ranges: $x<0$,
$0<x<t^{\gamma/6}$ and $x>t^{\gamma/6}$. In the first range, we use
$r\ge0.01$, $\phi\le1$ and \eqref{firstrange}. In the two other ranges, we
use $r\le1.99$ and \eqref{secondrange}:
\begin{equation}
\begin{aligned}
\left|\int\diffd x\, e^{rx} \Big[ h(\sigma+x,t)^2-\phi(x)^2\Big]\right|
&\le C\int_{-\infty}^0\diffd x\,e^{0.01x}\frac 1 t
+  C\int_0^{t^{\gamma/6}}\diffd x\,e^{1.99x}\phi(x)\frac {(1+x^3)e^{-x}}t
\\&\qquad\qquad
+ C \int_{t^{\gamma/6}}^\infty\diffd x\,e^{1.99x}\phi(x)   xe^{-x},
\\&
\le \frac C t+ \frac C t + C t^{\gamma/3}e^{-0.01t^{\gamma/6}} \le \frac C t,
\end{aligned}
\end{equation}
where we used $e^{1.99x}\phi(x)   xe^{-x}\le C x^2e^{-0.01x}$.
This concludes the proof.
\end{proof}

\section{Conclusion}
In this paper, we study the quantity $\Phi(c)$ appearing in \eqref{Phi},
which describes the behaviour of the solution to the Fisher-KPP equation
\eqref{FKPP} at
time $t$ and position $ct$ for $c>2$. We first showed that \eqref{Phi}
holds for values of $c>2$ satisfying \eqref{Phih0}, and we computed a small $\epsilon=c-2$ expansion of the quantity
$\Phi(c)$ appearing in \eqref{Phi}, up to the order $\mO(\epsilon^3)$, see
\eqref{mainresult}.
The expansion depends on the initial condition and the non-linear term in
\eqref{FKPP}
through two numbers
$\alpha$ and $\beta$ which characterize the shifted travelling wave reached
by the front, see \eqref{Bramson's result} and \eqref{omegaalphabeta}.
Although, $\Phi'(2)$ exists, $\Phi''(2)$ does not. The expansion
\eqref{mainresult} is surprisingly irregular, with several logarithmic corrections. 

Our method to reach this result relies on so-called magical relation between the
position $\mu_t$ of the front, the initial condition $h_0$, and the
quantity $\Phi(c)$, see \autoref{prop2}. This approach relates in some way
 the large $t$ expansion \eqref{position} of the position $\mu_t$ of the front
and the small $\epsilon$ expansion of $\Phi(2+\epsilon)$.

As explained in the proofs of the present paper and in
\cite{BerestyckiBrunetDerrida.2018}, the magical relation also allows to
predict non-rigorously the coefficients of the large $t$ expansion of the position of the
front for all initial conditions. It would be interesting to turn this
approach into a proof.

We believe that our result is universal; however, the proofs in this paper
rely on knowing the large $t$ expansion of the position of the front, and
on some other technical condition \eqref{technical} which has only been
proved for the Fisher-KPP equation \eqref{FKPP} with the $F(h)=h^2$
non-linearity, and an initial condition which is a compact perturbation of
the step function. Therefore, our result is only proved in that situation.

All the results in this paper could be easily extended to the front studied
in \cite{BerestyckiBrunetDerrida.2017,
BerestyckiBrunetDerrida.2018,BerestyckiBrunetPenington.2019}, where the
non-linearity in the Fisher-KPP equation is replaced by a moving boundary:
$\partial_th=\partial_x^2h+h$ if $x>\mu_t$ and $h(x,t)=1$ if $x\le\mu_t$
with $h$ differentiable at $x=\mu_t$. Then, as can be shown rigorously,
the magical relation \eqref{prop21} still holds with
$\phi(\epsilon,t)=\hat\phi(\epsilon)=1/(1+\epsilon)$, and we believe that
\eqref{mainresult} also holds; the only result missing  to prove it with our
method is that the large $t$ expansion of $\mu_t$ is also given by
\eqref{position} for that model. (The technical condition \eqref{technical}
is not needed in that case.)

The magical relation could also be used to compute $\Phi(c)$ for large $c$.
As is clear from inspecting \eqref{prop21}, this would require studying the
early times of the evolution of the front. This point
was already noticed in \cite{DerridaMeersonSasorov.2016}.

Beyond the results themselves, the method used to reach them are,
in our opinion, quite unexpected and interesting. We feel that there
remains many aspects of the Fisher-KPP equation that could be better
understood, and the magical relation might be a useful tool to that
purpose.
\section*{Thanks}
The author wishes to thank Pr.\@ Julien Berestycki for invaluable
discussions.

\appendix\section{Appendix}\label{appendix}
We prove \autoref{lem2}:
\setcounter{lem}{1}
\begin{lem}
Let $\beta\in(0,\gamma)$. For $t>0$, the quantities $h(x,t)$, $|\partial_x
h(x,t)|$, $|\partial_x^2h(x,t)|$ and $|\partial_t h(x,t)|$ are bounded by
$A(t)\max(1,e^{-\beta x})$ for some locally bounded function $A$.
\end{lem}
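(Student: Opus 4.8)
The plan is to read the bound on $h$ off \autoref{lem1}, and to obtain the bounds on its $x$- and $t$-derivatives from interior parabolic regularity applied on small space-time cylinders, all of which---being horizontal translates of one another---see the same equation. It is cleaner, and it is what is actually needed in \eqref{36}--\eqref{37}, to prove the bound in the form $A(t)\min(1,e^{-\beta x})$; since $\min\le\max$, this is at least as strong as the stated bound. For $h$ itself the bound is immediate: \eqref{lem11} with $r=\beta$ gives $h(x,t)\le\frac{e^{(1+\beta^2)t}}{\sqrt{4\pi t}}\,g(\beta,0)\,e^{-\beta x}$, and combining with $0\le h\le1$ yields $h(x,t)\le A_0(t)\min(1,e^{-\beta x})$ with $A_0(t):=\max\!\big(1,\tfrac{e^{(1+\beta^2)t}}{\sqrt{4\pi t}}g(\beta,0)\big)$, a function bounded on compact subsets of $(0,\infty)$ (it may, and for a step $h_0$ does, blow up as $t\to0$).

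For the derivatives, fix $t>0$ and $x_0\in\R$ and work on the parabolic cylinder $Q:=(x_0-1,x_0+1)\times(t/2,t)$. By the bound just proved, $\sup_Q h\le B(t)\min(1,e^{-\beta x_0})$, where $B(t):=e^{\beta}\sup_{s\in[t/2,t]}A_0(s)$ absorbs the horizontal shift by $1$ and the supremum of the continuous prefactor over $s\in[t/2,t]$, and is again bounded on compacts of $(0,\infty)$. Write \eqref{FKPP} as $\partial_t h=\partial_x^2 h+f$ with $f:=h-F[h]$; then $0\le f\le h$ because $0\le F[h]\le h$, standard parabolic smoothing makes $h$ a $C^{2,1}$ function for $t>0$, and (using $F\in C^1$, recall \eqref{BramsonsConditions}) $\partial_x h$ solves the linear equation $\partial_t(\partial_x h)=\partial_x^2(\partial_x h)+(1-F'[h])\,\partial_x h$ with bounded coefficient $1-F'[h]$. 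Interior $L^p$ parabolic estimates with $p$ large, applied first to $h$ and then to $\partial_x h$, together with the parabolic Sobolev embedding, then give
\[
|\partial_x h(x_0,t)|+|\partial_x^2 h(x_0,t)|+|\partial_t h(x_0,t)|\le C(t)\,\sup_Q h ,
\]
where $\partial_t h$ is recovered from $\partial_t h=\partial_x^2 h+h-F[h]$, and $C(t)$ depends only on $p$, on $\|F\|_{C^1[0,1]}$ and on $\min(1,t/2)$---not on $x_0$. Combined with $\sup_Q h\le B(t)\min(1,e^{-\beta x_0})$, this is the lemma, with $A(t):=C(t)B(t)$, bounded on compacts of $(0,\infty)$.

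The only step needing genuine care is the bookkeeping for $C(t)$: one must check that the parabolic constants may be taken independent of the cylinder's centre $x_0$---clear from the horizontal translation invariance of \eqref{FKPP}---while being allowed to degenerate as $t\to0$ but to stay bounded for $t$ in a compact subset of $(0,\infty)$---clear, since the parabolic distance from $(x_0,t)$ to the bottom and lateral parts of $\partial Q$ is $\min(1,t/2)$. An alternative avoiding $L^p$ theory is available for $\partial_x h$: differentiate the Duhamel representation $h=e^t G_t*h_0-\int_0^t e^{t-s}\,G_{t-s}*F[h(\cdot,s)]\,\diffd s$ (with $G_t$ the heat kernel), move $\partial_x$ onto the Gaussians, and use $\int|z|\,G_t(z)\,e^{\beta z}\,\diffd z<\infty$ together with the weighted bound on $h$, splitting the $s$-integral at $s=t/2$ to tame the $(t-s)^{-1/2}$ singularity of $\partial_x G_{t-s}$; then $\partial_x^2 h$ and $\partial_t h$ follow from $\partial_x^2 h=\partial_t h-h+F[h]$ and from the same Duhamel argument applied to the linear equation solved by $\partial_t h$, once a bound on $\partial_t h$ at one positive time is in hand.
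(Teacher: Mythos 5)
Your proof is correct, but the primary route you take is genuinely different from the paper's. The paper follows Uchiyama and works directly from the Duhamel representations
$\partial_x^k h(x,t)=\int \diffd y\,\partial_x^k p(x-y,t)h_0(y)+\int_0^t\diffd s\int\diffd y\,\partial_x p(x-y,t-s)\cdot(\cdots)$,
bounding each convolution by hand: Hölder's inequality with exponent $p$ chosen so that $p\beta<\gamma$ handles the $h_0$-term, self-similar scaling of the heat kernel produces explicit powers of $t$, and the nonlinear term is handled by inserting the exponential bound on $h$ from \autoref{lem1}. Your main argument instead invokes interior parabolic $W^{2,1}_p$ regularity on unit cylinders $Q=(x_0-1,x_0+1)\times(t/2,t)$, noting that translation invariance of \eqref{FKPP} makes all constants independent of $x_0$, so the sup of $h$ over $Q$ (which carries the $e^{-\beta x_0}$ factor) propagates to the derivatives. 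Both are sound; the paper's computation is more elementary and self-contained (it only uses the explicit heat kernel and Hölder), while yours is shorter modulo the off-the-shelf $L^p$ machinery and has the advantage of transparently explaining why the $x$-dependence of the bound is inherited unchanged by all derivatives. Your closing ``alternative'' paragraph is in fact essentially the paper's route, so you identified both.

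Two small remarks. First, your observation that the natural and useful bound is $A(t)\min(1,e^{-\beta x})$ rather than $A(t)\max(1,e^{-\beta x})$ is apt: the $\max$ form as literally stated in \autoref{lem2} does not give an integrable dominating function for the manipulations in \eqref{36}--\eqref{37} (for $x\to+\infty$ the bound $A(t)e^{rx}$ is not integrable), whereas the $\min$ form does, and the paper's own proof in fact establishes the $e^{-\beta x}$ decay together with the uniform Uchiyama bounds \eqref{U4}--\eqref{U5}, hence the $\min$ form. Second, be slightly careful in claiming ``parabolic smoothing makes $h$ a $C^{2,1}$ function'': since $F$ is only assumed $C^1$ (not $C^{1,\alpha}$), a Schauder bootstrap on $\partial_x h$ is not available, and one really must use $L^p$ theory as you do; it is worth saying explicitly that $\partial_x h$ is a strong $W^{2,1}_p$ solution of the differentiated equation with merely bounded measurable coefficient $1-F'(h)$, which is exactly the setting where the interior $L^p$ estimates apply.
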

\begin{proof}
We already know that the result holds for $h(x,t)$ from
\autoref{lem1} and $0<h(x,t)<1$. The Fisher-KPP equation \eqref{FKPP}
will then provide the result for $\partial_t h$ once it is proved for
$\partial_x^2 h$. We now focus on $\partial_x h$ and $\partial_x^2h$.
Following Uchiyama \cite[section~4]{Uchiyama.1978}, we
use the following representations:
\begin{align}
h(x,t) &= \int\diffd y \,p(x-y,t) h_0(y) + \int_0^t\diffd s\int\diffd y\,
p(x-y,t-s) f[h(y,s)],
\label{U1}
\\
\partial_x h(x,t) &= \int\diffd y\,\partial_x p(x-y,t) h_0(y) + \int_0^t\diffd s\int\diffd y\,
\partial_x p(x-y,t-s) f[h(y,s)],
\label{U2}\\
\partial_x^2 h(x,t) &= \int\diffd y\,\partial_x^2 p(x-y,t) h_0(y) + \int_0^t\diffd s\int\diffd y\,
\partial_x p(x-y,t-s) f'[h(y,s)]\partial_x h(y,s),
\label{U3}
\end{align}
where $f(h)=h-F(h)$ and
\begin{equation}
p(x,t)=\frac1{\sqrt{4\pi t}}e^{-\frac{x^2}{4t}}.
\end{equation}
By using $0\le h_0\le1$  and $0\le f[h]\le h\le 1$ in \eqref{U2},
Uchiyama shows that
\begin{equation}
\label{U4}
|\partial_x h(x,t)| \le 
\int\diffd y\, |\partial_x p(x-y,t)|+
\int_0^t\diffd s\int\diffd y\, |\partial_x p(x-y,t-s)|
=
\frac1{\sqrt\pi}\left(\frac1{\sqrt t}+2\sqrt t\right),
\end{equation}
Let $\zeta:=\max | f' |$; by using \eqref{U4} (and $0\le h_0\le1$) in
\eqref{U3}, he also obtains, in the same way,
\begin{equation}
\label{U5}
|\partial_x^2 h(x,t)| \le \frac1{2t}+\zeta\times(1+t).
\end{equation}

We now need to show that $|\partial_x h(x,t)|$ and $|\partial_x^2 h(x,t)|$
are bounded by $A(t)e^{-\beta x}$ for some locally bounded function $A$ and
for $\beta\in(0,\gamma)$.

We bound 
the right-hand-sides of  \eqref{U2} and \eqref{U3}, starting with the terms
involving $h_0$.
Choose $p>1$ such that $p\beta<\gamma$, and let $q$ be the Hölder conjugate
of $p$, \textit{i.e.}\@ such that $1/p+1/q=1$. By Hölder's inequality
applied to $h_0(y)^{1/p}e^{\beta y} \times \partial_xp(x-y,t) h_0(y)^{1/q}
e^{-\beta y}$, we obtain
\begin{equation}
\left|\int\diffd y\,\partial_x p(x-y,t) h_0(y)\right|\le
\left[\int\diffd y\, h_0(y)e^{\beta p y}\right]^{\frac1p}
\left[\int\diffd y\, |\partial_xp(x-y,t)|^q h_0(y) e^{-\beta
qy}\right]^{\frac1q}.
\label{holder1}
\end{equation}
The first integral in the right-hand-side is $g(\beta p,0)$, which is
finite since we took $\beta p<\gamma$. We focus on the second integral,
which we first bound using $h_0(y)\le 1$; then
\begin{equation}
\begin{aligned}
\int\diffd y\, |\partial_xp(x-y,t)|^q e^{-\beta qy}
&= e^{-\beta q x}\int\diffd y\, |\partial_xp(y,t)|^q e^{\beta qy},
\\&= e^{-\beta q x}\sqrt t \int\diffd y\, |\partial_xp(y\sqrt t,t)|^q 
e^{\beta qy\sqrt t},
\\&= e^{-\beta q x}t^{-q+\frac12} \int\diffd y\, |\partial_xp(y,1)|^q 
e^{\beta qy\sqrt t}.
\end{aligned}
\label{45}
\end{equation}
Indeed, as $p(y\sqrt t,t)=p(y ,1) /\sqrt t$, we have that
$\partial_x p (y\sqrt t,t) = \partial_x p (y,1)/t$. The remaining
integral on the right-hand-side converges because of the Gaussian bounds in
$\partial_x p(y,1)$ and gives some continuous function of $t$ defined for
all $t\ge0$. Then, in \eqref{holder1},
\begin{equation}\label{46}
\left|\int\diffd y\,\partial_x p(x-y,t) h_0(y)\right|\le
\frac{B_1(t)}{t^{1-\frac1{2q}}}e^{-\beta x},
\end{equation}
for some function $B_1$ continuous on $[0,\infty)$.
It is crucial for what follows that $B_1(0)$ is finite, so that the
divergence of $B_1(t)/t^{1-1/({2q})}$ as $t\searrow0$ is
integrable.

Note: we are about to introduce functions $B_2$, $B_3$, etc. As for $B_1$,
all these functions are implicitly defined and continuous on $[0,\infty)$.

The same method
for the second derivative, using this time
$\partial_x^2 p (y\sqrt t,t) = \partial_x^2 p (y,1)/t^{3/2}$,  gives
\begin{equation}
\left|\int\diffd y\,\partial_x^2 p(x-y,t) h_0(y)\right|\le
\frac{B_2(t)}{t^{\frac32-\frac1{2q}}}e^{-\beta x}.
\label{47}
\end{equation}

We now turn to the second term in the right hand side of \eqref{U2}.
We first write, from \autoref{lem1},
\begin{equation}
0\le f[h(y,s)]\le h(y,s)
\le C\frac{e^{(1+\beta^2)s}}{\sqrt s} e^{-\beta y}
\le C\frac{e^{(1+\beta^2)t}}{\sqrt s} e^{-\beta y},\label{48}
\end{equation}
for $0<s\le t$, with $C$ a constant.
Then, as in \eqref{45},
\begin{equation}\begin{aligned}
\int\diffd y\, |\partial_x p(x-y,t-s)| e^{-\beta y}
&=e^{-\beta x} \int\diffd y\, |\partial_x p(y,t-s)| e^{\beta y}
=\frac{e^{-\beta x}}{\sqrt{t-s}} \int\diffd y\,|\partial_x p(y,1)|e^{\beta
y \sqrt {t-s}}
\\&\le
\frac{e^{-\beta x}}{\sqrt{t-s}} \int\diffd y\,|\partial_x p(y,1)|e^{\beta
\max(y,0) \sqrt {t}}
= \frac{B_3(t)}{\sqrt{t-s}}e^{-\beta x},
\end{aligned}
\label{49}
\end{equation}
for $0\le s < t$.
This leads with \eqref{48} to
\begin{equation}
\left|\int\diffd y\, \partial_x p(x-y,t-s) f[h(y,s)]\right|\le
\frac{B_4(t)}{\sqrt{s(t-s)}}e^{-\beta x},
\end{equation}
for $0<s<t$. Then,
\begin{equation}
\left|\int_0^t\diffd s\int\diffd y\, \partial_x p(x-y,t-s) f[h(y,s)]\right|\le
\pi B_4(t) e^{-\beta x}.
\end{equation}
With \eqref{46}, into \eqref{U2}, we obtain
\begin{equation}
\big|\partial_xh(x,t)\big|\le
\frac{B_5(t)}{t^{1-\frac1{2q}}}e^{-\beta x}.
\label{52}
\end{equation}

We now turn to the second term in the right hand side of \eqref{U3}. We
bound $f'[h(y,s)]$ by $\zeta:=\max |f'|$; then using \eqref{52} with
\eqref{49}, we obtain
\begin{equation}
\left|\int\diffd y\,
\partial_x p(x-y,t-s) f'[h(y,s)]\partial_x h(y,s)\right|\le \zeta 
\frac{B_3(t)B_5(s)}{\sqrt{t-s}\,s^{1-\frac1{2q}}}e^{-\beta x},
\end{equation}
and, finally, since the integral on $s$ is finite,
\begin{equation}
\left|\int_0^t\diffd s\int\diffd y\,
\partial_x p(x-y,t-s) f'[h(y,s) )\partial_x h(y,s)\right|\le B_6(t)
e^{-\beta x}.
\end{equation}
With \eqref{47}, into \eqref{U3}:
\begin{equation}
\big|\partial_x^2h(x,t)\big|\le
\frac{B_7(t)}{t^{\frac32-\frac1{2q}}}e^{-\beta x},
\end{equation}
and the proof is complete.
\end{proof}

\printbibliography
\end{document}